\DeclareMathOperator{\var}{Var}
\DeclareMathOperator{\vol}{Vol}
\DeclareMathOperator{\cov}{Cov}
\newcommand{\R}{\mathbb{R}}
\newcommand{\E}{\mathbb{E}}
\newcommand{\C}{\mathbb{C}}
\newcommand{\Z}{\mathbb{Z}}
\newcommand{\B}{\mathcal B}
\renewcommand{\L}{ L}
\newcommand{\f}{\varphi}
\renewcommand{\phi}{\varphi}
\newcommand{\os}[2]{\overset{#1}{#2}}
\newcommand{\scp}[1]{\langle #1 \rangle}
\theoremstyle{plain}
\newtheorem{theorem}{Theorem}
\newtheorem*{theorem*}{Theorem}
\newtheorem{proposition}[theorem]{Proposition}
\newtheorem*{proposition*}{Proposition}
\newtheorem{lemma}[theorem]{Lemma}
\newtheorem{corollary}[theorem]{Corollary}
\theoremstyle{definition}
\newtheorem*{definition*}{Definition}
\newtheorem{remark}{Remark}
\newtheorem*{remark*}{Remark}
\newtheorem{example}{Example}
\newtheorem{counterexample}{Counterexample}[section]
\title{Limit theorems for $p$-domain functionals of stationary Gaussian fields}
\author{Nikolai Leonenko\footnote{University of Cardiff \texttt{leonenkoN@cardiff.ac.uk}}, Leonardo Maini\footnote{University of Luxembourg \texttt{leonardo.maini@uni.lu},  \texttt{ivan.nourdin@uni.lu}, \texttt{francesca.pistolato@uni.lu}}, Ivan Nourdin\footnotemark[2], Francesca Pistolato\footnotemark[2]}
\date{February 26, 2024}
\begin{document}

\maketitle

\begin{abstract}
    Fix an integer $p\geq 1$ and refer to it as the number of growing domains. 
    For each $i\in\{1,\ldots,p\}$, fix a compact subset $D_i\subseteq\R^{d_i}$ where 
    $d_1,\ldots,d_p\ge 1$. Let $d= d_1+\dots+d_{p}$ be the total underlying dimension. Consider a continuous, stationary, centered Gaussian field $B=(B_x)_{x\in \R^d}$ with unit variance. Finally, let $\varphi:\R \rightarrow \R$ be a measurable function such that $\E[\varphi(N)^2]<\infty$ for $N\sim N(0,1)$. 
    
    In this paper, we investigate central and non-central limit theorems as $t_1,\ldots,t_p\to\infty$ for functionals of the form
    \[
    Y(t_1,\dots,t_p):=\int_{t_1D_1\times\dots \times t_pD_p}\varphi(B_{x})dx.
    \]

Firstly, we assume that the covariance function $C$ of $B$ is {\it separable} (that is, $C=C_1\otimes\ldots\otimes C_{p}$ with $C_i:\R^{d_i}\to\R$), and thoroughly investigate under what condition 
    $Y(t_1,\dots,t_p)$ satisfies a central or non-central limit theorem when the same holds for $\int_{t_iD_i}\varphi(B^{(i)}_{x_i})dx_i$ for at least one (resp. for all) $i\in \{1,\ldots,p\}$, where
    $B^{(i)}$ stands for a stationary, centered, Gaussian field on $\R^{d_i}$ admitting $C_i$ for covariance function.
    When $\varphi$ is an Hermite polynomial, we also provide a quantitative version of the previous result, which improves some bounds from \cite{RST}. 

Secondly, we extend our study beyond the separable case, examining what can be inferred when the covariance function is either in the Gneiting class or is additively separable.
    
\end{abstract}

\section{Introduction}

Gaussian fields are widely used to model random quantities arising in different applications, see e.g. \cite{LRM23, Diggle2007, G02, Christakos1992RandomFM}. These random quantities may depend for instance on space $x_1\in\R^{d_1}$ and time $x_2\in\R_+$ (e.g. \cite{G02}) or more generally on several variables $x_1,x_2,x_3,\dots$ belonging to (possibly non-Euclidean) spaces of different dimensions (e.g. \cite{porcu1motivation, MRV21MotivNonUni,marinucci2022fluctuations}). 

Throughout this paper, we consider a continuous, stationary, real-valued, centered Gaussian field $B=(B_{x})_{x\in \R^d}$ with unit variance, where $d\geq 2$. We denote by $C:\R^d\rightarrow\R$ the \textbf{covariance function} of $B$, defined as
$$
{\rm Cov}(B_x,B_y)=C(x-y),\quad \quad x,y\in \R^{d}.
$$

We also consider a non constant measurable function $\varphi:\R\rightarrow\R$ satisfying $\E[\varphi^2(N)]<\infty$ for $N\sim N(0,1)$. 
As is well-known, $\varphi$ admits an Hermite decomposition (see, e.g. \cite[Section 1.4]{bluebook}) of the form
\begin{equation}\label{hermitedecomp}
\varphi =\E[\varphi(N)]+ \sum_{q=R}^\infty a_q H_q,\quad \mbox{with $R\geq 1$ such that $a_R\neq 0$},
\end{equation}
where $H_q$ is the $q$th Hermite polynomial and $a_q=a_q(\varphi)=\frac{1}{q!}\mathbb{E}\left[\varphi(N)H_q(N)\right]$.
The integer $R\geq 1$ is called the {\bf Hermite rank} of $\varphi$.

Finally, we consider a family of compact subsets $D_i\subseteq\R^{d_i}$, $1\leq i\leq p$, satisfying $\vol(D_i)>0$ for each $i$. The number $d= d_1+\dots+d_{p}$ is called the total dimension.

The main object of interest of our paper is the additive functional 
\begin{equation}
    \label{func}
    Y(t_1,\dots,t_p):=\int_{ t_1D_1\times\dots \times t_pD_p}\varphi(B_{x})dx\quad\mbox{for $t_1,\dots,t_p>0$}.
\end{equation}
We remark that the integral in~\eqref{func} is well defined thanks to the continuity assumption made on $B$ as well as the square integrability of $\varphi$ with respect to the standard Gaussian measure (see e.g. \cite[Proposition 3]{MN22}).

Under sufficient conditions ensuring that $\var(Y(t_1,\ldots,t_p))>0$ for every $t_1,\ldots,t_p>0$ large enough, we study the limit in distribution as $t_1,\ldots,t_p\to\infty$ of the normalized version of the functional $Y(t_1,\ldots,t_p)$ defined as
\begin{equation}
    \label{mainquestion}
    \widetilde{Y}(t_1,\ldots,t_p):=\frac{Y({t_1,\ldots,t_p})-\E[Y({t_1,\ldots,t_p})]}{\sqrt{{\rm Var}(Y(t_1,\ldots,t_p))}}.
\end{equation}

While the case $p=1$ has been extensively studied since the eighties (see e.g. the seminal works \cite{BM83}, \cite{DM79}, \cite{R60}, \cite{T79}), not much literature can be found when the integration domain of (\ref{func}) does not grow uniformly with respect to all its directions (that is, when $p\geq 2$ and when $t_1,\ldots,t_p$ go to infinity at possibly different rates). In fact, the extended class of functionals \eqref{func} has aroused interest only in the past decade (see e.g. \cite{RST}, \cite{PR16}, \cite{B11}, \cite{LRM23}, \cite{AO17}, \cite{PS17}), because of the more and more important role that \textbf{spatio-temporal} functionals of random fields or random fields with {\bf separable covariance function} (e.g. the fractional Brownian sheet) play now in applications. 

In our work, the number $p$ of growing domains and their dimensions $d_i$ can be arbitrary. This is why we refer to \eqref{func} as a \textbf{$p$-domain} (rather than a {\it spatio-temporal}) {\bf functional}. To give at least one explicit motivation for studying the asymptotic behavior of this type of functionals, let us consider the case where $\varphi=\mathbf{1}_{[a,\infty)}$. It corresponds to the excursion volume of $B$ at level $a\in\R$ in the observation window $t_1D_1\times\dots \times t_pD_p$, namely
\[
Y(t_1,\dots,t_p)=\vol\left(\left\{\right(x_1,\dots,x_{p})\in t_1D_1\times\ldots\times t_pD_p\,:\,B_{(x_1,\dots,x_{p})}\ge a\}\right),
\]
which is a (random) geometrical object that has been extensively studied in the literature, see e.g \cite{MR2319516, LRM23}. 
Our approach offers more flexibility than the usual one (corresponding to $p=1$), as it gives the possibility to study the excursion sets of $B$ when the parameters are in domains $t_1D_1,\ldots,t_pD_p$ that can grow at different rates.

In a broader setting, the aim of the present work is to enhance our understanding of the asymptotic behaviour of $p$-domain functionals associated with stationary Gaussian fields, when the covariance function has a specific form. In particular, we will show how, in some cases, this asymptotic behaviour can be simply obtained from that of $1$-domain functionals, explaining in a new light and in a more systematic way some results from the recent literature (e.g. those contained in \cite{RST}). 

\subsection{Separable case}
\label{subsec:mainres}
In this section, we investigate the asymptotic behavior of (\ref{mainquestion}) when we assume that the covariance function of $B$ is separable.

A covariance function $C:\R^d\to\R$ is said \textbf{separable} when it can be written as $C=C_1\otimes\ldots\otimes C_p$, that is, when
\begin{equation}
    \label{equ:sep}
    C(x_1,\ldots,x_p)= \prod_{i=1}^p C_i(x_i),\quad x_i\in \R^{d_i},\quad 1\leq i\leq p,
\end{equation}
for some functions $C_i:\R^{d_i}\to\R$ satisfying $C_i(0)=1$ for each $i$.  
%Note that $B$ has unit variance, i.e. $C(0)=1$. 
It is easy to check that $C$ is non-negative definite if and only if $C_i$ is non-negative definite for every $i$. Also, by stationarity of $B$ on $\R^d$, we have that $C_i$ is the covariance function of the field $B^{(i)}:=(B_{(x_1,\ldots,x_i,\ldots,x_p)})_{x_i\in\R^{d_i}}$ for any fixed values of $x_j$, $j\neq i$. Since we are only interested in distributions, we can define the \textbf{marginal functionals} 
\begin{equation} \label{marginal funct}   
Y_i(t_i):=\int_{t_iD_i}\varphi(B_{x_i}^{(i)})dx_i, \quad i=1,\ldots p,
\end{equation}
and their normalized versions
\begin{equation}\label{stand marginal funct}
    \widetilde{Y}_i(t_i):=\frac{Y_i(t_i)-\E[Y_i(t_i)]}{\sqrt{{\rm Var}(Y_i(t_i))}},\quad i=1,\ldots p,
\end{equation}
with the convention that, for any given $i$, the values of $x_j$, $j\neq i$, in (\ref{marginal funct}) are arbitrary but fixed.

Let us denote by $\overset{d}{\rightarrow}$ the convergence in distribution. The main result of this section is the following. 
\begin{theorem}\label{thm:main}
Let $\f:\R \rightarrow \R$ be a measurable function satisfying $\E[\varphi^2(N)]<\infty$ for $N\sim N(0,1)$, with Hermite rank $R\geq 1$. 
    Let $B=(B_x)_{x\in\R^d}$ be a real-valued, continuous, centered, stationary Gaussian field with unit-variance. 
    Let $C:\R^d \rightarrow \R$ be the covariance function of $B$, assume it is separable in the sense of \eqref{equ:sep}, and also that it satisfies, for each $i$:
$$
C^R\ge0\quad\mbox{and}\quad C_i\in \bigcup_{M=1}^\infty L^M(\R^{d_i}).
$$
   Let us consider $\widetilde Y$ given by \eqref{mainquestion} and $\widetilde Y_i$ given in \eqref{stand marginal funct}. 
    Then, the following two assertions are equivalent:
    \begin{enumerate}
        \item[(a)]
        $\widetilde{Y}_{i}(t_i)\overset{d}{\rightarrow} N(0,1)$ as $t_i \to \infty$ for at least one $i\in\{1,\ldots,p\}$;   
        \item[(b)]
        $\widetilde{Y}(t_1,\ldots,t_p)\overset{d}{\rightarrow} N(0,1)$ as $t_1,\ldots,t_p \to \infty$.
        \end{enumerate}
    
\end{theorem}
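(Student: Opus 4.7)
My approach is to exploit the tensor-product structure of the isonormal Gaussian process underlying $B$. By separability, one can realise $B_x = W(h_x)$ for an isonormal process $W$ on the Hilbert space $\mathfrak{H} = \bigotimes_{i=1}^p \mathfrak{H}_i$, with $h_x = \bigotimes_i h^{(i)}_{x_i}$ and $\langle h^{(i)}_{x_i}, h^{(i)}_{y_i}\rangle_{\mathfrak{H}_i} = C_i(x_i - y_i)$. Then $H_q(B_x) = I_q(h_x^{\otimes q})$, so the Hermite expansion gives $Y(\vec t) - \E[Y(\vec t)] = \sum_{q\geq R} a_q I_q(f_q^{\vec t})$ with $f_q^{\vec t} = \int_{t_1D_1\times\dots\times t_pD_p} h_x^{\otimes q}\,dx$. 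Under the canonical identification $\mathfrak{H}^{\otimes q} \cong \bigotimes_i \mathfrak{H}_i^{\otimes q}$, one obtains the crucial tensor factorisation
\[
f_q^{\vec t} = \bigotimes_{i=1}^p f_q^{(i), t_i}, \qquad f_q^{(i), t_i} := \int_{t_iD_i} (h^{(i)}_{x_i})^{\otimes q}\,dx_i,
\]
which propagates to $L^2$ and contraction norms: $\|f_q^{\vec t}\|^2 = \prod_i \|f_q^{(i), t_i}\|^2$ and $\|f_q^{\vec t}\otimes_r f_q^{\vec t}\|^2 = \prod_i \|f_q^{(i), t_i}\otimes_r f_q^{(i), t_i}\|^2$ for every $r\in\{1,\dots,q-1\}$.

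Using $C^R\geq 0$ (which implies $C_i^R\geq 0$ for each $i$, so that $\int\int C_i^R = \int\int|C_i|^R$) together with $C_i\in L^{M_i}(\R^{d_i})$, I would next obtain the uniform bound $\var Z_q(\vec t)/\var Z_R(\vec t) \leq q!/R!$ for every $q\geq R$ and convergence of these ratios to finite limits. Combined with the orthogonality of distinct Wiener chaoses and the Peccati--Tudor theorem, an $L^2$-truncation of the Hermite series reduces the CLT problem for $\widetilde Y(\vec t)$ (resp.\ $\widetilde Y_i(t_i)$) to separate CLTs for each chaos projection $\widetilde Z_q(\vec t)$ (resp.\ $\widetilde Z_q^{(i)}(t_i)$). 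By the Nualart--Peccati fourth moment theorem, these are equivalent to the vanishing, for every $r\in\{1,\dots,q-1\}$, of
\[
\rho^{(q,r)}(\vec t) := \frac{\|f_q^{\vec t}\otimes_r f_q^{\vec t}\|^2}{\|f_q^{\vec t}\|^4} = \prod_{i=1}^p \rho_i^{(q,r)}(t_i), \quad \rho_i^{(q,r)}(t_i) := \frac{\|f_q^{(i),t_i}\otimes_r f_q^{(i),t_i}\|^2}{\|f_q^{(i),t_i}\|^4},
\]
where each $\rho_i^{(q,r)}(t_i)\in[0,1]$ by a standard Cauchy--Schwarz estimate on contractions.

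The implication (a)$\Rightarrow$(b) is then direct: marginal CLT at rank $R$ for direction $i_0$ forces $\rho_{i_0}^{(R,r)}(t_{i_0})\to 0$ for every $r$, and since this CLT is equivalent to $\int|C_{i_0}|^R<\infty$, one also has $\rho_{i_0}^{(q,r)}(t_{i_0})\to 0$ for all $q>R$ and $r$. Consequently $\rho^{(q,r)}(\vec t)\to 0$ for every $q,r$, because the remaining factors are bounded by one.

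The converse (b)$\Rightarrow$(a) is the main obstacle and rests on the following \emph{marginal dichotomy}: under the hypotheses, for each $i$ either $\int|C_i|^R<\infty$ and $\rho_i^{(R,r)}(t_i)\to 0$ for all $r$ (marginal CLT), or $\int|C_i|^R=\infty$, in which case $\widetilde Z_R^{(i)}(t_i)$ converges to a Rosenblatt-type element $I_R(f^{*,i})$ whose limit kernel satisfies $\|f^{*,i}\otimes_r f^{*,i}\|>0$ for \emph{every} $r\in\{1,\dots,R-1\}$, yielding $\liminf_{t_i\to\infty}\rho_i^{(R,r)}(t_i)\geq c_i > 0$ simultaneously for all $r$. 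This dichotomy is established by a self-similar rescaling of the four-point integrals entering $\|f_R^{(i),t_i}\otimes_r f_R^{(i),t_i}\|^2$, whose numerator and denominator inherit a common scale in the non-integrable case while the limiting integrals remain strictly positive. Granting the dichotomy, if no marginal were in the CLT regime then for any fixed $r$ one would have $\rho^{(R,r)}(\vec t) = \prod_i \rho_i^{(R,r)}(t_i) \geq \prod_i c_i > 0$ eventually, contradicting the joint CLT at rank $R$ ensured by (b). Hence at least one marginal satisfies $\int|C_i|^R<\infty$ and the marginal CLT, which is (a).
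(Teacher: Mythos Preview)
Your tensor factorisation of kernels and contraction norms is exactly what the paper uses (Propositions~\ref{prop:suffHq}--\ref{prop:necHq}), and that part is fine. However, both implications rest on claims that are not justified under the stated hypotheses.

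\medskip

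\textbf{The converse (b)$\Rightarrow$(a): the ``marginal dichotomy'' is false.} You assert that if $\int |C_i|^R=\infty$ then $\widetilde Z_R^{(i)}(t_i)$ converges to a Rosenblatt-type element, and in particular $\liminf_{t_i}\rho_i^{(R,r)}(t_i)>0$ for \emph{every} $r$. Nothing in Theorem~\ref{thm:main} gives you this: the hypotheses only say $C_i\in L^{M_i}$ for some $M_i$, with no regular variation of $C_i$ or its spectral density. Without such structure the non-central limit you invoke simply need not exist, and in fact \cite{MN22} exhibits isotropic long-range fields ($C\notin L^R$) whose $R$th-chaos projection nonetheless satisfies a CLT, so the dichotomy ``$C_i\in L^R$ or non-Gaussian limit'' is genuinely false. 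Consequently your product lower bound $\prod_i\rho_i^{(R,r)}\ge\prod_i c_i>0$ cannot be obtained this way. The paper's argument (Proposition~\ref{prop:necHq}) is different and does not rely on any limit identification: from the joint CLT one knows the $r=2$ contraction of $\widetilde f$ vanishes, hence by the product formula \emph{some} factor $\|\widetilde f_i\otimes_2\widetilde f_i\|$ vanishes; then an interpolation trick, splitting $(t_iD_i)^4$ according to $|C_i(x-z)C_i(y-u)|\le a_{t_i}|C_i(x-y)C_i(z-u)|$ and using $C^R\ge0$, bootstraps this single vanishing contraction into vanishing of \emph{all} $r$-contractions for that same $i$. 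This is the missing idea.

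\medskip

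\textbf{The direct implication (a)$\Rightarrow$(b): the equivalence you assume is wrong.} You write that the marginal CLT for $\widetilde Y_{i_0}$ ``is equivalent to $\int|C_{i_0}|^R<\infty$'', and use this to get $\rho_{i_0}^{(q,r)}\to0$ for all $q\ge R$. The same counterexamples from \cite{MN22} show this equivalence fails. The paper avoids the problem by a two-case split: if \emph{all} $C_i\in L^R$, a $p$-domain Breuer--Major theorem (Theorem~\ref{thm:BMpdom}) handles the full Hermite series directly; if \emph{some} $C_j\notin L^R$, a reduction principle (Proposition~\ref{prop:redP}, using $C^R\ge0$ and $C_j\in L^{M_j}$) shows $\widetilde Y$ and $\mathrm{sgn}(a_R)\widetilde Y[R]$ are asymptotically equal in $L^2$, so one only needs the single chaos $q=R$ and your chaos-by-chaos truncation (together with the appeal to Peccati--Tudor and the unproven ``convergence of these ratios to finite limits'') is unnecessary.
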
 

\begin{remark}
    The integrability assumptions on $C_i$ for Theorem \ref{thm:main} to hold may be removed when $\varphi=H_q$; moreover, in this case we also have \textbf{quantitative results} for the convergence in distribution (see Subsection \ref{subsec:Hq}).
\end{remark}

\begin{remark}
    Given a {\it discrete} stationary centered Gaussian field $B=(B_k)_{k\in \mathbb Z^d}$ with unit variance, the definition of separable covariance function $C:\mathbb Z^d \to \R$ is similar, i.e. $
        C(z) = \prod_{i=1}^p C_i(z_i)$ with $C_i:\Z^{d_i}\to \R$ for $i=1,\ldots,p$, whereas the functionals \eqref{func} and \eqref{marginal funct} are defined as follows:
    \[
    Y(n_1,\dots,n_p)=\sum_{k\in (\prod_{i=1}^p[0,n_i]^{d_i})\cap \mathbb Z^{d}}\varphi(B_{k}), \quad \quad Y_i(n_i)=\sum_{k_i\in  [0,n_i]^{d_i}\cap \mathbb Z^{d_i}}\varphi(B_{k_1,\dots,k_p}),
    \]
    where $n_i\in \mathbb N$ for every $i=1,\ldots,p$. In this setting, analogous results to Theorem~\ref{thm:main} could be obtained, see e.g. Remark~\ref{rem:discrete} where we highlight that Proposition~\ref{prop:quantHq} directly translates with straightforward modifications for a discrete Gaussian field.  
\end{remark}

\begin{remark}
    The previous result provides a large class of fields $B$ with long-range dependence, i.e. with covariance function $C\notin L^R$, such that the functional $Y_t:= Y(t,\ldots,t)$ exhibits Gaussian fluctuations around its mean. See e.g. Example~\ref{ex:longmemory}. We refer to~\cite{MN22} and the references therein for a deeper analysis and further examples of this phenomenon. 
\end{remark}
    
Since central limit theorems for $1$-domain functionals have been extensively explored in the literature, it is not difficult to imagine how useful and powerful the implication $(a)\Rightarrow (b)$ in Theorem \ref{thm:main} can be.
It is noteworthy that a specific instance of this implication had previously been observed in the papers \cite{RST} and \cite{PR16}; however, it was restricted to a very specific context -- rectangular increments of a fractional Brownian sheet -- and was not part of a comprehensive systematic investigation, as we undertake in this work.

\vspace{0.25cm}

Since Theorem \ref{thm:main} establishes that \eqref{func} displays Gaussian fluctuations (if and only) if at least one of its marginal functionals does, a natural question arises: what happens when none of the $Y_i(t_i)$'s exhibits Gaussian fluctuations? We investigate this problem in the classical framework of regularly varying covariance functions.

Given two functions $f$ and $g$, we write $f(x)\sim g(x)$ to indicate that \begin{equation}
    \lim_{\|x\|\to 0} f(x)g(x)^{-1}=1.
\end{equation}
Recalling that $R$ denotes the Hermite rank of $\varphi$ (see \eqref{hermitedecomp}), we consider the following conditions:
\begin{itemize}
    \item $C_i$ is {regularly varying} with parameter $-\beta_i \in(-d_i/R,0)$, that is \begin{equation}
 \label{equ:regvar} C_i(z_i)=\rho_i(\|z_i\|)=L_i(\|z_i\|)\|z_i\|^{-\beta_i}\quad \quad \beta_i\in(0,d_i/R),
\end{equation}
where $L_i:(0,\infty)\rightarrow \R$ is slowly varying, i.e. $L_i(rs)/L_i(s)\rightarrow 1$ as $s\rightarrow\infty$, $\forall r>0$.
\item $C_i$ admits an absolutely continuous spectral measure $G_i(d\lambda_i)=g_i(\lambda_i)d\lambda_i$ on $\R^{d_i}$, and for some constant $c_i>0$ we have
\begin{equation}
 \label{equ:regspec}
 g_i(\lambda_i) \sim c_iL_i(1/\|\lambda_i\|) \|\lambda_i\|^{\beta_i-d_i} \quad \quad\text{as }\|\lambda_i\|\rightarrow 0.
 \end{equation}
\end{itemize}
If \eqref{equ:regvar}-\eqref{equ:regspec} hold and $R\ge2$, then it is known that $\widetilde Y_i(t_i)$ converges in distribution to a non-Gaussian random variable (see Theorem \ref{thm:DM}). In the following Theorem \ref{thm:main 2}, we give conditions so that $Y$  has non-Gaussian fluctuations either. {We introduce the new limiting object as follows. For any $i=1,\ldots,p$ we define the $\sigma$-finite measure $\nu_i$ on $\R^{d_i}$ as \begin{equation}
    \nu_i(dx_i):=\|x_i\|^{\beta_i-d_i}dx_i, 
\end{equation}
assuming $0<\beta_i<d_i/R$. We define  $\nu$ as the product measure $\nu_1\times\ldots\times \nu_p$, which is again a $\sigma$-finite measures on $\R^{d}$. The so-called multiple $R$th Wiener-It\^o integral with respect to $\nu$, denoted by $I_{\nu,R}$ can then be constructed as in \cite{M81}, see also Section~\ref{sec:prel}. Now, we set \begin{equation}
    H^{\beta_1, \ldots, \beta_p}_{R, D_1\times \ldots \times D_p} := \frac{{\rm }I_{\nu,R}\left(f_{R,D_1\times\ldots\times D_p}\right)}{\sqrt{\var\left(I_{\nu,R}\left(f_{R,D_1\times\ldots\times D_p}\right)\right)}},
\end{equation}
where $f_{R,D_1\times\ldots\times D_p}: (\mathbb R^d)^R \to \mathbb C$ is given by
     \begin{equation}\label{equ:fNonCen}
    f_{R,D_1\times\ldots\times D_p}(\lambda_1,\dots,\lambda_R) \colon = \int_{D_1\times\ldots\times D_p} e^{i\langle x,\lambda_1+\dots+\lambda_R\rangle} dx.
\end{equation}
     %the product measure $\nu_1\times\ldots\times \nu_p$ on $\R^{d}$ is defined by 
     %$\nu_i(dx_i)=\|x_i\|^{\beta_i-d_i}dx_i$ 
     %and $I_{\nu_1\times\ldots\times \nu_p,R}$ 
     %denotes the $R$th Wiener-It\^o integral with respect to a $\sigma$-finite measure .
     \begin{remark}
     For $p=1$, we recover a Hermite random variable. For this reason, the previous object may be seen as a \textbf{$p$-domain Hermite random variable}.
     \end{remark}
     }

{
\begin{theorem}\label{thm:main 2}
Let $\f:\R \rightarrow \R$ be a measurable function satisfying $\E[\varphi^2(N)]<\infty$ for $N\sim N(0,1)$, with Hermite rank $R\geq 1$ (in particular, we have $a_R\neq0$, see \eqref{hermitedecomp}).
Let $B=(B_x)_{x\in\R^d}$ be a real-valued, continuous, centered, stationary Gaussian field with unit-variance. 
Let $C:\R^d \rightarrow \R$ be the covariance function of $B$, assume it is separable in the sense of \eqref{equ:sep}, and satisfies 
\[
C^R\ge0 \quad\quad\text{ and }\quad \quad\text{\eqref{equ:regvar}-\eqref{equ:regspec} hold $\quad\forall i\in \{1,\ldots,p\}$.}
\]
Let us consider $\widetilde Y$ given by \eqref{mainquestion}. 
Then, as $t_1,\ldots,t_p\rightarrow\infty$, we have 
\begin{equation}\label{equ:noncentral}
             \widetilde{Y}(t_1,\ldots,t_p)\overset{d}{\longrightarrow} sgn(a_R) H^{\beta_1,\ldots,\beta_p}_{R,D_1\times\ldots\times D_p},
         \end{equation}
      In particular, the limit \eqref{equ:noncentral} is not Gaussian as soon as $R\ge2$.
 \end{theorem}
 }
%  \footnote{{ Reviewer: (1) pages 4-5: at the point when Theorem 1.5 is stated, the multiple integral $I_{\nu,R}$ is not introduced and it is hard to understand the statement. 
%  ------------------
%  (2) page 5: please comment on the relation between the limit distribution in (1.9) and the Hermite distribution.
%      --------------
% Ivan: We had referred the reader to section 2. But to please the reviewer, we could give a bit more detail and even name the limit (for example $p$-domain Hermite sheet?) and add a discussion on this random field after the statement of theorem 1.5. This would address comment number 2.} }
 
Theorem \ref{thm:main 2} represents a generalization from $1$-domain to $p$-domain of the celebrated Dobrushin-Major-Taqqu (see Theorem \ref{thm:DM}). We remark that depending on the choice of the domains, the function $f$ can be computed explicitly (see e.g. Remark~\ref{rem:domainsfourier}).
In the particular case of the rectangular increments of a fractional Brownian sheet,
we note that Theorem \ref{thm:main 2} for $p=2$ is already proved in \cite{RST} and \cite{PR16}. 

\vspace{0.25cm}

\begin{remark}\label{rem:fixed}
    % To conclude the discussion around separable covariance function, we remark that 
    Analogous results (both central and non-central) hold if we fix some of the domains, i.e. considering $t_1D_1\times\dots \times t_{p-1}D_{p-1}\times D_p$ and $Y(t_1,\dots,t_{p-1},1)$. In this setting, it is possible to prove the following.
    \begin{itemize}
        \item Under the same assumptions of Theorem \ref{thm:main}, the following two assertions are equivalent:
    \begin{enumerate}
        \item[(a)]
        $\widetilde{Y}_{i}(t_i)\overset{d}{\rightarrow} N(0,1)$ as $t_i \to \infty$ for at least one $i\in\{1,\ldots,p-1\}$;   
        \item[(b)]
        $\widetilde{Y}(t_1,\ldots,t_{p-1},1)\overset{d}{\rightarrow} N(0,1)$ as $t_1,\ldots,t_{p-1} \to \infty$.
        \end{enumerate} 
    \item Analogously to Theorem \ref{thm:main 2}, if \eqref{equ:regvar}-\eqref{equ:regspec} hold for $i=1,\dots,p-1$ and $C^R\ge0$, if $G_p$ is the spectral measure of $C_p$, then the convergence of Theorem \ref{thm:main 2} still holds replacing $\nu_p$ with $G_p$, namely
    \begin{equation}
        \widetilde{Y}(t_1,\ldots,t_{p-1},1)\overset{d}{\rightarrow} \frac{{\rm sgn(a_R)}I_{\nu_1\times\ldots\times \nu_{p-1}\times G_p,R}\left(f_{R,D_1\times\ldots\times D_p}\right)}{\sqrt{\var\left(I_{\nu_1\times\ldots\times \nu_{p-1}\times G_p,R}\left(f_{R,D_1\times\ldots\times D_p}\right)\right)}}\,, \ \text{as }t_1,\dots,t_{p-1}\rightarrow\infty .
    \end{equation}
    \end{itemize}
    The proofs of these facts follow from similar arguments to the ones of Theorem \ref{thm:main} and Theorem \ref{thm:main 2} (see Section~\ref{sec:sep}).
\end{remark}

\subsection{Non separable case}

It is easy to construct examples illustrating that the normal convergence of functionals $\widetilde{Y}_i(t_i)$ is in general not enough to determine the behavior of $\widetilde{Y}(t_1,\ldots,t_p)$ when the separability of the covariance function \eqref{equ:sep} is dropped.
See Example \ref{ex:sepmatters} for such a situation.

This is why we examine in Section \ref{sec:nonsep} what can happen when we go beyond the separable case,
by investigating two different classes: Gneiting covariance functions (Section \ref{subsec:gneiting}) and additively separable covariance functions (Section \ref{subsec:additSep}).

Although not separable, the covariance functions belonging to the Gneiting class are wedged between two separable functions.
It is therefore not surprising that a comparable phenomenon can still be proven in this context, see Theorem \ref{thm:gneitingcase}.

In contrast, the situation in the additively separable case is much more complicated. We still manage to prove a kind of ``reduction'' theorem, see Theorem \ref{thm:main 4}, with however a big difference: the marginal functionals to be considered in the additively separable case are really different from the $Y_i$'s of Theorem~\ref{thm:main}.

We refer to Section 5 for details.

\subsection{Plan of the paper} 
    The paper is organized as follows. Section \ref{sec:prel} contain some needed preliminaries. In Section \ref{sec:sep} we prove Theorem~\ref{thm:main} and Theorem~\ref{thm:main 2}.  In Section \ref{sec:ex} we provide several examples where our results apply, and we compare them with the existing literature. In Section~\ref{sec:nonsep} we go beyond the separability assumption by investigating two other frameworks. Finally, in the Appendix we prove some auxiliary results.

\section{Preliminaries}\label{sec:prel}

In this section we briefly present selected results on Malliavin-Stein method and classical results for $1$-domain functionals. 

\subsection{Elements of Malliavin-Stein method}
\label{subsec:prelmalliavin}
Theorem~\ref{thm:main} is proved using the Fourth Moment Theorem by Nualart and Peccati (see \cite{cltNuaPec}) and its quantitative version by Nourdin and Peccati (see \cite[Theorem 5.2.6]{bluebook}). For all the missing details on Malliavin calculus we refer to  \cite{nualart2006malliavin} or \cite{bluebook}.

Let us fix a probability space $(\Omega,\mathcal F, \mathbb P)$. Consider a continuous, stationary, centered Gaussian field $B=(B_{x})_{x\in \R^d}$ with unit variance and {covariance function} $C:\R^d\rightarrow\R$. We assume that $\mathcal F$ is the $\sigma$-field generated by $B$. By continuity and stationarity of $B$, the covariance function $C$ is continuous. As a result, Bochner's theorem yields the existence of a unique real, symmetric\footnote{In the sense that $G(A)=G(-A)$ for every $A\in\mathcal{B}(\R^d)$.}, finite measure $G$ on $\R^d$ endowed with the Borel $\sigma$-algebra $\B(\R^d)$, called the {spectral measure} of $B$, satisfying
\begin{equation}
    \label{Bochner}
    C(x)=\int_{\R^d}e^{i\langle x,\lambda \rangle}G(d\lambda),\quad x\in\R^d.
\end{equation}
We define the real separable Hilbert space 
\begin{equation}\label{equ:defL2G} 
\mathcal{H}:=\L^2(G)=\left\{h:\R^d\rightarrow \C : \int_{\R^d}|h(\lambda)|^2 G(d\lambda)<\infty, \ \overline{h(\lambda)}=h(-\lambda)\right\},
\end{equation}
where $|\cdot|$ denotes the complex norm, endowed with the inner product\footnote{Note that $\langle h,g\rangle_{\mathcal H}$  is real for every $h,g\in\mathcal{H}$, because $G$ is symmetric and $h,g$ are even.}
\begin{equation}
    \langle h,g\rangle_{\mathcal{H}}=\int_{\R^d}h(\lambda)\overline{g(\lambda)}G(d\lambda)=\int_{\R^d}h(\lambda)g(-\lambda)G(d\lambda).
\end{equation}
Thanks to \cite[Proposition 2.1.1]{bluebook} we may consider an isonormal Gaussian process $X$ on $\mathcal{H}$, with covariance kernel \begin{equation}
    \E\left[ X_G(h) X_G(g)\right]=\langle h,g \rangle_{\mathcal{H}}.
\end{equation}
We are going to construct an explicit isomorphism. By \eqref{Bochner}, the field $\left(X_G\left(e_x\right)\right)_{x\in\R^d}$, where $e_x:=\mathrm e^{i\langle x,\cdot \rangle}$, is stationary and Gaussian, with covariance function $C$ and spectral measure $G$. Hence, the two fields share the same distribution, that is \begin{equation}\label{equ:IGP}
    \left(X_G\left(e_x\right) \right)_{x\in\R^d} \os{d}{=} (B_x)_{x\in\R^d}.
\end{equation} 
Since we study limit theorems in distribution, we assume from now on that $B_x = X_G\left(e_x\right)$ for any $x\in\mathbb R^{d}$. For $q\ge 1$, we also define the $q$th Wiener chaos as the linear subspace of $\L^2(\Omega)$ generated by $\{ H_q(X_G(h)) : h\in \mathcal H\}$, and for every $h\in\mathcal H$ such that $\|h\|_{\mathcal H}=1$, we define the $q$th Wiener-It\^o integral as \begin{equation}\label{equ:WieItoInt}
    I_q(h^{\otimes q}) = H_q(X_G(h)),
\end{equation} 
where $h^{\otimes q}:(\R^d)^q \to \R$ is such that \begin{equation}\label{equ:tensorproduct}
    h^{\otimes q} (x_1,\ldots,x_q) = \prod_{l=1}^q h(x_l).
\end{equation} 
By density, the definition of $I_q$ can be linearly extended to every function in the space $\mathcal{H}^{\odot q} = L^2_{s}((\R^{d})^q, G^{\otimes q})$ of the {\it symmetric} functions in $\mathcal{H}^{\otimes q} = L^2((\R^{d})^q, G^{\otimes q})$.  {This fact implies the following equality, which will be crucial in the next sections. Recalling \eqref{equ:IGP} and \eqref{equ:WieItoInt}, we may write
\begin{equation}
\int_{D}H_q\left(B_x\right)dx  \overset{d}{=} \int_{D}H_q\left(X_G(e_x)\right)dx =\int_{D}I_q\left(e_x^{\otimes q}\right)dx = I_{q}(f),
\end{equation}
where $f:(\R^{d})^q\rightarrow \R$ is defined as 
\begin{equation}
    f(\lambda_1,\dots,\lambda_q) := \int_{D}\mathrm e_x^{\otimes q}( \lambda_1,\dots,\lambda_q)   dx = \int_{D}\mathrm e^{i\langle x, \lambda_1+\dots+\lambda_q \rangle}  dx.
\end{equation}
}
% \footnote{{(3) page 7, lines 11-13: give more details for the identity on line 11 since it is pretty important in the sequel.}}

In the following, we write $I_{G,q}$ when we need to explicitly refer to the $q$th Wiener-It\^o integral acting on $\mathcal{H}^{\odot q}$ with respect to the measure $G$. 

For $q\in\mathbb N$, $r=1,\ldots,q-1$ and $h,g$ symmetric functions with unit norm in $\mathcal{H}$, we can define the $r$th contraction of $h^{\otimes q}$ and $g^{\otimes q}$ as the (generally non-symmetric) element of $\mathcal H^{\otimes 2q-2r}$ given by \begin{equation}
    h^{\otimes q} \otimes_r g^{\otimes q} = \scp{h,g}_{\mathcal H}^{r} \,\,h^{\otimes q-r} \otimes g^{\otimes q-r}.
\end{equation}
We then extend the definition of contraction to every pair of elements in $\mathcal H^{\odot q}$. We will denote the norm in this space by $\|\cdot\|_q$.

We are finally ready to state the celebrated Fourth Moment Theorem.

\begin{theorem}[Fourth Moment Theorem, \cite{cltNuaPec}]\label{thm:4th}
    Fix $q\ge 2$, consider $(h_t)_{t>0}\subset \mathcal H^{\odot q}$ and assume that $\E[I_q(h_t)^2]\to 1$ as $t\to\infty$. Then, the following three assertions are equivalent: \begin{itemize}
        \item $I_q(h_t)$ converges in distribution to a standard Gaussian random variable $N\sim N(0,1)$;
        \item $\E[I_q(h_t)^4] \to 3=\E[N^4]$, where $N\sim N(0,1)$;
        \item $\|h_t\otimes_r h_t\|_{2q-2r}\to 0$ as $t\to\infty$, for all $r=1,\ldots, q-1$.
    \end{itemize}
\end{theorem}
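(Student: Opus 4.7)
The plan is to establish the three equivalences by combining the product formula for multiple Wiener--It\^o integrals with the Malliavin--Stein method. The cycle I would follow is (third condition) $\Leftrightarrow$ (second condition) via an exact algebraic identity, (third) $\Rightarrow$ (first) via a Stein-type bound, and (first) $\Rightarrow$ (second) via hypercontractivity of Wiener chaoses.

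First, I would expand $I_q(h_t)^2$ using the product formula
$$I_q(h_t)^2 = \sum_{r=0}^{q} r!\binom{q}{r}^2 I_{2q-2r}\bigl(h_t \widetilde{\otimes}_r h_t\bigr),$$
where $\widetilde{\otimes}_r$ denotes the symmetrized $r$th contraction. Squaring again, taking expectation and invoking orthogonality of chaoses together with the isometry $\E[I_n(f)^2]=n!\|f\|_n^2$ for symmetric $f$, one obtains
$$\E[I_q(h_t)^4] = \sum_{r=0}^{q} (r!)^2\binom{q}{r}^4 (2q-2r)!\, \|h_t \widetilde{\otimes}_r h_t\|_{2q-2r}^2.$$
The $r=q$ contribution equals $(q!)^2\|h_t\|_q^4 = \E[I_q(h_t)^2]^2$, and after isolating the $r=0$ term and rearranging I would arrive at the Nualart--Peccati identity
$$\E[I_q(h_t)^4] - 3\,\E[I_q(h_t)^2]^2 = \sum_{r=1}^{q-1} c_{q,r}\, \|h_t \widetilde{\otimes}_r h_t\|_{2q-2r}^2,$$
for explicit positive constants $c_{q,r}$. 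Since each summand is non-negative and $\E[I_q(h_t)^2]\to 1$, the fourth-moment condition and the symmetrized contraction condition are equivalent, and elementary inequalities relating $\widetilde{\otimes}_r$ to $\otimes_r$ transfer the equivalence to the unsymmetrized contractions stated in the theorem.

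Second, I would deduce (third) $\Rightarrow$ (first) from the Malliavin--Stein bound
$$d_{\mathrm{TV}}\bigl(I_q(h_t),N\bigr) \;\leq\; \frac{2}{q}\sqrt{\var\!\left(\|DI_q(h_t)\|_{\mathcal H}^2\right)},\qquad N\sim N(0,1),$$
valid whenever $\E[I_q(h_t)^2]=1$. Using $DI_q(h_t)=qI_{q-1}(h_t(\cdot,\bullet))$ and applying the product formula to $I_{q-1}\otimes I_{q-1}$ one obtains a chaos decomposition of $q^{-1}\|DI_q(h_t)\|_{\mathcal H}^2$ whose variance is controlled by a positive linear combination of the norms $\|h_t\otimes_r h_t\|^2$ for $r=1,\ldots,q-1$; the normalization is absorbed using $\E[I_q(h_t)^2]\to 1$. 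Finally, (first) $\Rightarrow$ (second) is immediate from hypercontractivity: elements of a fixed chaos have all $L^p$ norms bounded by a universal constant times the $L^2$ norm, so uniform integrability of every polynomial functional forces convergence in distribution to upgrade to convergence of the fourth moment.

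The main obstacle, in my view, is the combinatorial bookkeeping in the first step: verifying that the $r=0$ and $r=q$ contributions to $\E[(I_q(h_t)^2)^2]$ together with the appropriate cross-terms conspire exactly to produce $3\,\E[I_q(h_t)^2]^2$ with a manifestly non-negative remainder, and that the passage between symmetrized and unsymmetrized contractions preserves both directions of the vanishing. Once this identity is in place, the Malliavin--Stein step is comparatively clean, since $\var(q^{-1}\|DI_q(h_t)\|_{\mathcal H}^2)$ expands in the same family of contraction norms that already controls the fourth moment.
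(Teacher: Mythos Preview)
The paper does not give its own proof of this statement: Theorem~\ref{thm:4th} is quoted in the preliminaries section as a known result, with a citation to \cite{cltNuaPec}, and the quantitative companion Theorem~\ref{thm:4thQuant} is likewise simply attributed to \cite{bluebook}. So there is no in-paper argument to compare your proposal against.

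That said, your sketch is the standard modern route to the Fourth Moment Theorem, essentially the one developed in \cite{bluebook}. The exact identity you aim for is precisely the first display of Theorem~\ref{thm:4thQuant} in the paper,
\[
\E[I_q(h)^4]-3\E[I_q(h)^2]^2=\frac{3}{q}\sum_{r=1}^{q-1}rr!^2\binom{q}{r}^4(2q-2r)!\,\|h\widetilde{\otimes}_r h\|_{2q-2r}^2,
\]
so the combinatorial bookkeeping you flag as the main obstacle is already recorded there (with the second display handling the passage to unsymmetrized contractions). One small correction to your narrative: in the expansion of $\E[I_q(h_t)^4]$ the $r=0$ term is $(2q)!\|h_t\widetilde{\otimes}_0 h_t\|_{2q}^2$, which is \emph{not} simply $\E[I_q(h_t)^2]^2$; rather, one must expand $\|h_t\widetilde{\otimes}_0 h_t\|_{2q}^2$ itself as a sum over pairings, and it is this expansion that produces the extra $2\E[I_q(h_t)^2]^2$ together with further contraction terms that combine with the $1\le r\le q-1$ summands to yield the stated identity with manifestly positive $c_{q,r}$. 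Apart from this, the cycle (contractions $\Leftrightarrow$ fourth moment) via the identity, (contractions $\Rightarrow$ CLT) via the Malliavin--Stein total-variation bound, and (CLT $\Rightarrow$ fourth moment) via hypercontractivity is exactly the textbook argument.
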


\medskip

We will also need a quantitative version of the Fourth Moment Theorem, which can be stated as follows (see \cite[Theorem 5.2.6 and (5.2.6)]{bluebook}). {Let us denote by $d_{TV}$ (resp $d_W$) the total variation (resp. Wasserstein) distance (see \cite[Appendix C]{bluebook} for a rigorous definition).}
\begin{theorem}\label{thm:4thQuant}
Fix $q\ge 2$ and consider $h\in \mathcal H^{\odot q}$. 
Then
\begin{eqnarray}
\E[I_q(h)^4]-3\E[I_q(h)^2]^2
&=&\frac3q\sum_{r=1}^{q-1} rr!^2\binom{q}{r}^4(2q-2r)!
\|h\widetilde{\otimes}_r  h\|^2_{2q-2r}\\
&=&\sum_{r=1}^{q-1} q!^2\binom{q}{r}^2\left\{\|h\otimes_r h\|^2_{2q-2r}+\binom{2q-2r}{q-r}\|h\widetilde{\otimes}_r h\|^2_{2q-2r}\right\},
\end{eqnarray}
and, with $N\sim N(0,1)$,
\begin{align}
    &d_{TV}(I_q(h),N)\le \frac{2}{\sqrt{3}}\sqrt{\E[I_q(h)^4]-3\E[I_q(h)^2]^2},\\
    &{d_{W}(I_q(h),N)\le \frac{2}{\sqrt{3\pi}}\sqrt{\E[I_q(h)^4]-3\E[I_q(h)^2]^2}.} 
\end{align}
\end{theorem}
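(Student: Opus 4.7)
The plan is to prove the theorem in two parts: first the algebraic identity for $\E[I_q(h)^4]-3\E[I_q(h)^2]^2$, which is a purely combinatorial consequence of the product formula for multiple Wiener-It\^o integrals, and then the total variation bound via the Nourdin-Peccati Stein-Malliavin strategy.

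For the algebraic part, I would start from the product formula
\[
I_q(h)^2=\sum_{r=0}^{q}r!\binom{q}{r}^2 I_{2q-2r}(h\widetilde{\otimes}_r h),
\]
take the expectation of the square, and invoke orthogonality of the Wiener chaoses together with the isometry $\E[I_n(f)^2]=n!\|f\|^2_n$ (for symmetric $f$) to obtain
\[
\E[I_q(h)^4]=\sum_{r=0}^{q}(r!)^2\binom{q}{r}^4(2q-2r)!\,\|h\widetilde{\otimes}_r h\|^2_{2q-2r}.
\]
The term $r=q$ contributes exactly $(q!)^2\|h\|^4_q=\E[I_q(h)^2]^2$, while a direct permutation count in the symmetrizer gives
\[
(2q)!\,\|h\widetilde{\otimes}_0h\|^2_{2q}=(q!)^2\sum_{s=0}^q\binom{q}{s}^2\|h\otimes_s h\|^2_{2q-2s},
\]
whose boundary contributions $s=0$ and $s=q$ yield an additional $2(q!)^2\|h\|^4_q$. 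Subtracting the total $3(q!)^2\|h\|^4_q=3\E[I_q(h)^2]^2$ and using the simplification $(r!)^2\binom{q}{r}^4(2q-2r)!=(q!)^2\binom{q}{r}^2\binom{2q-2r}{q-r}$ produces the second displayed form of the identity. The equivalent first form, involving only symmetrized contractions, then follows from the identity $\|h\otimes_r h\|^2_{2q-2r}=\|h\otimes_{q-r}h\|^2_{2r}$ (valid for symmetric $h$ and proved by a Fubini swap), which allows one to absorb the raw-contraction terms.

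For the total variation bound, standard arguments based on Stein's equation reduce $d_{TV}(F,N)$ to controlling $\sup_{f}|\E[f'(F)-Ff(F)]|$ over a class of Lipschitz test functions. Malliavin integration by parts gives $\E[Ff(F)]=\E[f'(F)\scp{DF,-DL^{-1}F}_{\mathcal H}]$, and since $F=I_q(h)$ lies in the $q$th Wiener chaos, $-L^{-1}F=q^{-1}F$, so $\scp{DF,-DL^{-1}F}_{\mathcal H}=q^{-1}\|DF\|^2_{\mathcal H}$. Under the standing normalization $\E[F^2]=1$, one also has $\E[q^{-1}\|DF\|^2_{\mathcal H}]=1$, whence Cauchy-Schwarz yields
\[
d_{TV}(F,N)\le 2\,\E\bigl|1-q^{-1}\|DF\|^2_{\mathcal H}\bigr|\le 2\sqrt{\var\bigl(q^{-1}\|DF\|^2_{\mathcal H}\bigr)}.
\]
Using $DI_q(h)(x)=qI_{q-1}(h(x,\cdot))$ together with the product formula and integrating the free variable against $G$ yields
\[
\var\bigl(q^{-1}\|DF\|^2_{\mathcal H}\bigr)=\sum_{r=1}^{q-1}\frac{r^2(r!)^2}{q^2}\binom{q}{r}^4(2q-2r)!\,\|h\widetilde{\otimes}_r h\|^2_{2q-2r},
\]
whose $r$-th coefficient is exactly $\tfrac{r}{3q}$ times that in the first form of $\E[F^4]-3\E[F^2]^2$. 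Since $\tfrac{r}{3q}\le \tfrac13$ for $r\le q-1$, this gives $\var(q^{-1}\|DF\|^2_{\mathcal H})\le \tfrac13(\E[F^4]-3\E[F^2]^2)$, delivering the constant $\tfrac{2}{\sqrt 3}$.

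The main obstacle will be the combinatorial bookkeeping of the algebraic part: carefully extracting the $3\E[F^2]^2$ term from the $r=0$ and $r=q$ contributions via the permutation count, and converting between symmetrized and raw contraction norms to establish the equivalence of the two displayed expressions. Once this identity is in hand, the Stein-Malliavin argument is essentially mechanical, and the sharp constant $\tfrac{2}{\sqrt 3}$ drops out of the term-by-term comparison.
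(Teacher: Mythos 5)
The paper does not prove this statement at all: it is imported verbatim from Nourdin--Peccati \cite[Theorem 5.2.6 and (5.2.6)]{bluebook}, so there is no in-paper argument to compare against. Your reconstruction follows the standard textbook route (product formula plus orthogonality for the moment identity, Stein--Malliavin for the total variation bound), and most of it is sound: the expansion of $\E[I_q(h)^4]$, the identification of the $r=q$ term, the permutation count for $(2q)!\|h\widetilde{\otimes}h\|^2_{2q}$, the coefficient identity $r!^2\binom{q}{r}^4(2q-2r)!=q!^2\binom{q}{r}^2\binom{2q-2r}{q-r}$, and the chain $d_{TV}(F,N)\le 2\E|1-q^{-1}\|DF\|^2_{\mathcal H}|\le 2\sqrt{\var(q^{-1}\|DF\|^2_{\mathcal H})}$ together with the term-by-term comparison of coefficients are all correct.

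The genuine gap is your derivation of the \emph{first} displayed identity (the one with coefficient $\frac{3r}{q}$) from the second. You claim it follows from $\|h\otimes_r h\|^2_{2q-2r}=\|h\otimes_{q-r}h\|^2_{2r}$, but that swap only relates raw contractions at complementary indices; equating the two displayed expressions requires converting the raw-contraction terms $\sum_r \binom{q}{r}^2\|h\otimes_r h\|^2$ into a combination of \emph{symmetrized} contraction norms with the asymmetric weights $(\frac{3r}{q}-1)\binom{2q-2r}{q-r}$, and $\|h\widetilde{\otimes}_r h\|$ is \emph{not} symmetric under $r\mapsto q-r$, so the averaging you implicitly rely on does not close. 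The standard repair is the integration-by-parts identity $\E[F^4]=\frac{3}{q}\E[F^2\|DF\|^2_{\mathcal H}]$ combined with the chaos expansions of $F^2$ and of $q^{-1}\|DF\|^2_{\mathcal H}$ (the same expansion you already use to compute $\var(q^{-1}\|DF\|^2_{\mathcal H})$); pairing the two expansions term by term yields the $\frac{3r}{q}$ coefficients directly. This matters because your sharp constant $\frac{2}{\sqrt{3}}$ is obtained precisely by comparing the variance coefficients $\frac{r^2}{q^2}$ against the $\frac{3r}{q}$ coefficients of the first form, so the first form must be established independently rather than inferred from the second. Finally, note that the total variation bound as you prove it requires the normalization $\E[I_q(h)^2]=1$ (which you state, and which is how the paper applies it), and that the inequality $\frac{r}{3q}\le\frac{q-1}{3q}$ actually yields the slightly sharper constant $2\sqrt{\frac{q-1}{3q}}$ appearing in the cited source.
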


% \footnote{{Reviewer: the quantitative results are obtained in the case $\phi=H_q$ (the qth Hermite polynomial). I’m wondering if they can be obtained for a more general $\phi$. This is related to the controll of the L2-norm of the rest term in Proposition 3.7 which seems not impossible (at least in particular cases). This would lead to a bound under the Wasserstein distance (together with the estimate on page 7, line -1).
% ----------------------
% Ivan: The idea is interesting, but I would say it's out of the scope of the paper. However, we could add a remark clarifying the situation. When we are in the case of the "true" Breuer-Major (i.e., when all the $B^{(i)}$ have short memory), then all the terms of the chaotic decomposition contribute to the limit, making it difficult to extract a meaningful rate of convergence (we can refer to my paper with Peccati and Podolskij). When we are in the case where at least one $B^{(i)}$ has long memory, then proposition 3.7 applies, and we can potentially obtain a rate for the Wasserstein distance, though likely far from optimal.------------------ Francesca: I have added a line in the previous Theorem 2.2, and Wasserstein distance bound in Proposition 3.3, as well as Remark 3.5, right after the proof of our quantitative result with a brief discussion of a possible extension of the result to more general $\varphi$}}

\subsection{Classical results for \texorpdfstring{$1$}{1}-domain functionals} \label{subsec:onedomain}
In this section, we state  the two most popular results in the framework of limit theorems for {\it $1$-domain} functionals. 
The first result, the celebrated Breuer-Major theorem, was first proved in \cite{BM83} in a discrete version, and then extended to several settings. Here we state the continuous version of the result. Its proof can be found e.g. in \cite{NZ19}.
\begin{theorem}[Breuer-Major]\label{thm:BM} Let $B=(B_x)_{x\in\R^d}$ be a real-valued, continuous, centered Gaussian field on $\R^d$, assumed to be stationary and to have unit-variance. Let $C$ denotes its covariance function. Let $\f:\R \rightarrow \R$ be a measurable function satisfying $\E[\varphi^2(N)]<\infty$, $N\sim N(0,1)$, with Hermite rank $R$. Let us consider \begin{equation}
Y(t):= \int_{tD} \f(B_x) dx,
\end{equation}
where $D\subset\R^d$ is a compact set, and $t\ge 0$. If $C\in L^R(\R^d)$,
then, as $t\to\infty$, \begin{equation}
\frac{Y(t)-\E[Y(t)]}{t^{d/2}}\os{d}{\longrightarrow} N(0,\sigma^2),
\end{equation}
where $\sigma^2=\vol(D) \sum_{q=R} ^{\infty} q! a_q^2 \int_{\R^d} C(x)^q dx \ge 0$. In particular, if $\sigma^2>0$, then $\var(Y(t)) \sim \sigma^2  t^d$ and a central limit theorem holds for $(Y(t)-\E[Y(t)])/\sqrt{\var(Y(t))}$.
\end{theorem}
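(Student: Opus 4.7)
The plan is to combine a chaos-by-chaos analysis (via the Fourth Moment Theorem recalled in Theorem~\ref{thm:4th}) with a standard truncation of the Hermite expansion. Write $\varphi = \sum_{q\ge R} a_q H_q$, so that
\begin{equation}
Y(t)-\E[Y(t)] = \sum_{q\ge R} a_q\, I_q(f_{q,t}),\qquad f_{q,t}(\lambda_1,\ldots,\lambda_q) = \int_{tD} e^{i\langle x,\lambda_1+\cdots+\lambda_q\rangle}\,dx,
\end{equation}
where $I_q$ is taken with respect to the spectral measure $G$ of $B$. By the chaos orthogonality,
\begin{equation}
\var(Y(t)) = \sum_{q\ge R} q!\,a_q^2\,\|f_{q,t}\|_q^2 = \sum_{q\ge R} q!\,a_q^2 \int_{tD}\int_{tD} C(x-y)^q\,dx\,dy.
\end{equation}

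First I would handle the variance asymptotics. For fixed $q\ge R$, the change of variables $u=x-y$, $y\in tD$, together with the estimate $|(tD)\cap(tD-u)|\le t^d\vol(D)$ and the pointwise convergence $t^{-d}|(tD)\cap(tD-u)|\to \vol(D)$ for every fixed $u$, yields by dominated convergence
\begin{equation}
t^{-d}\int_{tD}\int_{tD} C(x-y)^q\,dx\,dy \;\longrightarrow\; \vol(D)\int_{\R^d} C(u)^q\,du,
\end{equation}
provided $C^q\in L^1(\R^d)$. The hypothesis $C\in L^R(\R^d)$ combined with $|C|\le 1$ gives $|C|^q\le |C|^R$ for every $q\ge R$, so $C^q\in L^1$ for all such $q$ and moreover the dominated convergence step can be made uniform in $q$, yielding $\var(Y(t))/t^d\to\sigma^2$.

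Next, for each fixed $q\ge R$ I would apply Theorem~\ref{thm:4th} to $I_q(f_{q,t}/t^{d/2})$. The main task is to estimate the $r$-th contractions for $1\le r\le q-1$. Using the Fourier representation of $f_{q,t}$ and the identity
\begin{equation}
\int_{(\R^d)^r}|g(\lambda_1,\ldots,\lambda_r)|^2\,G^{\otimes r}(d\lambda)
\end{equation}
arising after $r$ contractions, one finds that $\|f_{q,t}\otimes_r f_{q,t}\|_{2q-2r}^2$ is bounded by a finite sum of integrals of the form
\begin{equation}
\int_{(tD)^4} C(x_1-x_2)^{a}\,C(x_2-x_3)^{b}\,C(x_3-x_4)^{c}\,dx_1\,dx_2\,dx_3\,dx_4
\end{equation}
with $a+b+c = 2q$ and $\min(a,b,c)\ge 1$; in particular one has $a,c\ge r\ge 1$ and $b\ge q-r\ge 1$. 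Since $|C|\le 1$ and $C\in L^R$, one can bound some factors by $1$ and keep at least $R$ many copies of $C$ in $L^1$ (together with one ``free'' integration of volume $O(t^d)$), so that after normalization by $t^{2d}$ each such term tends to $0$; this is the one place where $C\in L^R$ is genuinely used, and it is the step I expect to be the main obstacle to verify cleanly. Once the contractions vanish, Theorem~\ref{thm:4th} gives $t^{-d/2}I_q(f_{q,t})\xrightarrow{d} N(0,\sigma_q^2)$ with $\sigma_q^2 = q!\,\vol(D)\int_{\R^d} C(u)^q\,du$; independence of the limits across $q$ (since different chaoses are asymptotically jointly Gaussian and orthogonal) yields joint convergence.

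Finally, I would conclude by a truncation argument. Set $Y_Q(t) := \sum_{R\le q\le Q} a_q I_q(f_{q,t})$ and $R_Q(t) := Y(t)-\E[Y(t)]-Y_Q(t)$. From the chaos-by-chaos step, $t^{-d/2}Y_Q(t)\xrightarrow{d} N(0,\sum_{R\le q\le Q} a_q^2 q!\int C^q)$ as $t\to\infty$, and from the variance computation,
\begin{equation}
\limsup_{t\to\infty} t^{-d}\var(R_Q(t)) \;\le\; \vol(D)\sum_{q>Q} a_q^2 q!\int_{\R^d} |C(u)|^q\,du \;\xrightarrow[Q\to\infty]{} 0,
\end{equation}
the last limit coming from dominated convergence in the series (dominated by $\sum a_q^2 q! \int |C|^R=\|\varphi\|_{L^2}^2\|C\|_{L^R}^R/\text{const}$ up to an absolute constant). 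A standard triangular-array argument (e.g.\ \cite[Theorem 3.2]{bluebook} in spirit) then upgrades convergence of $t^{-d/2}Y_Q(t)$ to convergence of $t^{-d/2}(Y(t)-\E[Y(t)])$ to $N(0,\sigma^2)$, which is the claim.
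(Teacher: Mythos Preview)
The paper does not give its own proof of this statement; it is quoted as a classical result with a reference to \cite{NZ19}. That said, the paper's proof of the $p$-domain extension (Theorem~\ref{thm:BMpdom}) specializes to $p=1$ and follows exactly the template you describe: expand in chaoses and verify conditions (a)--(d) of \cite[Theorem~6.3.1]{bluebook}, which are precisely your variance asymptotics, summability of the limiting variances, vanishing of contractions, and uniform tail control of the chaos expansion. So your strategy is the standard one and matches the paper's treatment of the generalization.

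One point to sharpen: the squared contraction norm is not a sum of three-factor integrals but the single four-factor expression
\[
\|f_{q,t}\otimes_r f_{q,t}\|_{2q-2r}^2 = \int_{(tD)^4} C(x-z)^r\, C(y-u)^r\, C(x-y)^{q-r}\, C(z-u)^{q-r}\, dx\,dy\,dz\,du,
\]
and your heuristic ``bound some factors by $1$ and keep $R$ copies in $L^1$'' does not close as stated, because the individual exponents $r$ and $q-r$ may each be smaller than $R$, so $|C|^r$ or $|C|^{q-r}$ need not be integrable on their own. The paper itself does not spell this step out either, deferring to \cite[Theorem~7.2.4, point~(c)]{bluebook}; the usual fix is a change of variables that extracts one free factor of $t^d\vol(D)$, followed by a Young/H\"older-type or dominated-convergence argument exploiting that the \emph{total} power $2q\ge 2R$ suffices. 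With that refinement in place your argument is complete.
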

The idea behind Breuer-Major theorem is that if the fields is not ``too correlated" at infinity (precisely, if $\int_{\R^d}|C(x)|^R dx<\infty$), then the fluctuations of the functional $Y$ are Gaussian. 
Conversely, if this is not the case, then one can have non-Gaussian fluctuations (this does not mean that we necessarily have non-Gaussian fluctuations, see e.g. \cite{MN22}).
The following Theorem \ref{thm:main 2} provides a non-central limit theorem for functionals of Gaussian fields having a regularly varying covariance function, see \eqref{equ:regvar}, satisfying \eqref{equ:regspec}. In the discrete case, a first proof for functionals with Hermite rank $R=1,2$, was given by Taqqu in \cite{Taq75}, and then generalized to any $R$ by Dobrushin and Major in \cite{DM79}. The following sticks to the continuous case and a proof can be found e.g. in \cite{LO14}.

\begin{theorem}[Dobrushin-Major-Taqqu]\label{thm:DM}
 Let $B=(B_x)_{x\in\R^d}$ be a real-valued, continuous, centered, stationary Gaussian field with unit-variance. Let $\f:\R \rightarrow \R$ be a measurable function satisfying $\E[\varphi^2(N)]<\infty$, $N\sim N(0,1)$, with Hermite rank $R$ and $R$th coefficient $a_R\neq0$, see \eqref{hermitedecomp}. Let us consider 
 \[
 Y(t):= \int_{tD} \f(B_x) dx,
 \]
 where $D$ is a compact set with $\vol(D)>0$ and $t\ge0$.
 Let $C:\R^d \rightarrow \R$ be the covariance function of $B$ and suppose that \eqref{equ:regvar}-\eqref{equ:regspec} hold for $C$ with parameter $-\beta\in(-d/R,0)$. Then, as $t\rightarrow\infty$, we have 
\begin{equation}
             \widetilde{Y}(t)\overset{d}{\rightarrow} \frac{{\rm sgn(a_R)}I_{\nu,R}\left(f_{R,D}\right)}{\sqrt{\var\left(I_{\nu,R}\left(f_{R,D}\right)\right)}},
         \end{equation}
     where $\nu$ is the measure on $\R^{d}$ defined as $\nu(dx) :=  |x|^{\beta-d} dx$, and $f_{R,D}$ is defined as
     \begin{equation}
    f_{R,D}(\lambda_1,\dots,\lambda_R) := \int_{D} e^{i\langle x,\lambda_1+\dots+\lambda_R\rangle} dx.
\end{equation}
    In particular, the limit $I_{\nu,R}\left(f_{R,D}\right)$ is not Gaussian as soon as $R\ge2$.
 \end{theorem}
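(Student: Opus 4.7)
The plan is the classical route for a non-central limit theorem: reduce the problem to the rank-$R$ Hermite component by orthogonality of Wiener chaoses, express it as a multiple Wiener-It\^o integral in the spectral domain, then rescale the spectral variable so that the control measure $G$ converges to $c\,\nu$.

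First, I would expand $\f=\sum_{q\ge R}a_q H_q$ and set $V_q(t):=\int_{tD}H_q(B_x)\,dx$, so that $Y(t)-\E[Y(t)]=\sum_{q\ge R}a_q V_q(t)$ with pairwise orthogonal summands. A change of variable combined with \eqref{equ:regvar} and the uniform Potter bounds yields
\[
\var(V_R(t)) = R!\,t^{2d}\!\!\int_{D\times D}\!\!C(t(u-v))^R du\,dv \,\sim\, R!\,L(t)^R\,t^{2d-R\beta}\!\!\int_{D\times D}\!\!\|u-v\|^{-R\beta}du\,dv,
\]
the last integral being finite since $R\beta<d$. For $q>R$, the same computation gives variance of order $L(t)^q t^{2d-q\beta}$ in the long-range regime, or $O(t^d)$ in the short-range regime (using $|C|^q\le |C|^R$ to split the integral at a large radius), both negligible compared with $L(t)^R t^{2d-R\beta}$. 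Hence only $a_R V_R(t)$ contributes to the limit after normalization.

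Next, passing to the spectral side via \eqref{equ:WieItoInt}, $V_R(t)=I_{G,R}(f_{R,tD})$ with $f_{R,tD}(\lambda_1,\ldots,\lambda_R)=t^d\,f_{R,D}(t\lambda_1,\ldots,t\lambda_R)$. Substituting $\mu_j=t\lambda_j$ and exploiting the self-similarity of the complex Gaussian random measure $W$ associated to $G$, namely that $W_t(\cdot):=W(\cdot/t)$ has control measure $G_t(\cdot):=G(\cdot/t)$, yields, in distribution,
\[
V_R(t)\stackrel{d}{=}t^d\, I_{G_t,R}(f_{R,D}).
\]
Using \eqref{equ:regspec} and the slow variation of $L$, for each $\mu\ne 0$, $g_t(\mu)=t^{-d}g(\mu/t)\sim c\,L(t)\,t^{-\beta}\,\|\mu\|^{\beta-d}$ as $t\to\infty$, so $(L(t)t^{-\beta})^{-1}G_t\to c\,\nu$ vaguely on $\R^d\setminus\{0\}$. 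Using the scaling property $I_{\alpha G,R}(f)\stackrel{d}{=}\alpha^{R/2}I_{G,R}(f)$, the convergence of spectral measures lifts to the convergence of Wiener-It\^o integrals provided one shows
\[
(L(t)t^{-\beta})^{-R}\!\!\!\int_{(\R^d)^R}\!\!|f_{R,D}|^2\,dG_t^{\otimes R} \,\longrightarrow\, c^R\!\!\int_{(\R^d)^R}\!\!|f_{R,D}|^2\,d\nu^{\otimes R},
\]
together with the analogous convergence of contractions; the right-hand side is finite because $R\beta<d$ and because $f_{R,D}$ (Fourier transform of the indicator of a compact set) decays polynomially. Combining with the variance asymptotics from the first step gives the stated limit, with $\mathrm{sgn}(a_R)$ absorbed from $a_R/|a_R|$. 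Non-Gaussianity for $R\ge 2$ is automatic since $I_{\nu,R}(f_{R,D})$ is a nontrivial element of the $R$-th Wiener chaos.

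The main obstacle lies in the last step. The limit measure $c\,\nu$ has both a non-integrable singularity at the origin and infinite total mass, so $G_t/\sigma_t^2\to c\,\nu$ only vaguely on $\R^d\setminus\{0\}$, not weakly. Passing to the limit inside the $L^2$-integral therefore requires a careful two-sided truncation of $f_{R,D}$---near the origin to control the singularity of $\nu$, and at infinity to control the decay of $g$---together with Potter's bounds providing a dominating envelope uniform in $t$, after which one concludes by dominated convergence. Once $L^2$-convergence of kernels is secured, weak convergence in the chaos is standard.
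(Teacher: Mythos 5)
Your sketch is correct and follows the standard route; note that the paper does not actually prove Theorem~\ref{thm:DM} itself but cites \cite{LO14} for it, and your strategy (reduction to the $R$th chaos by comparing variances, spectral representation, rescaling of the control measure, then $L^2$ convergence of the kernels against the singular limit measure $\nu$) is precisely the one the paper deploys for its $p$-domain generalization in Section~\ref{subsec:noncentralproof}, via Lemmas~\ref{change} and~\ref{change2}. The single step you leave unexecuted---the two-sided truncation with Potter bounds and dominated convergence needed to pass to the limit in $L^2$---is exactly the step the paper in turn outsources to the proof of \cite[Theorem 5]{LO14}, so you have correctly located where all of the technical work lies.
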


\begin{remark}\label{rem:domainsfourier}
    The previous theorem shows that the limit $I_{\nu,R}\left(f_{R,D}\right)$ depends on $R$, $d$, $\beta$ and the domain $D$. In fact, the integrand is $f_{R,D}(\lambda_1,\dots,\lambda_R)=\mathcal{F}[\mathbf{1}_D](\lambda_1+\dots+\lambda_R)$, where  $\mathcal{F}[\mathbf{1}_D]$ is the Fourier transform of the indicator function of the compact set $D$. The most common choices for $D$ are the rectangles $D=\bigtimes_{i=1}^d[0,u_i]$, $u_i\in \R_+$, with Fourier transform 
    \[
    \mathcal{F}[\mathbf{1}_D](\lambda)=\prod_{j=1}^d \int_0^{u_j}e^{i\lambda_jx_j}dx_j=\prod_{j=1}^d \frac{e^{i\lambda_ju_j}-1}{i\lambda_j},
    \]
    and domains $D=\{x\in\R^d:\|x\|\le u\}$, $u\in\R_+$, with Fourier transform
    \[
    \mathcal{F}[\mathbf{1}_D](\lambda)=c_d \,J_{d/2}(u\|\lambda\|)\left(\frac{u}{\| \lambda\|}\right)^{d/2},
    \]
    where $J_{d/2}$ is a Bessel function of the first kind and order $d/2$, and $c_d$ is a positive constant, see e.g. \cite[Section 2.2]{MN22}.
\end{remark} 

\section{Proof of Theorems~\ref{thm:main} and \ref{thm:main 2}}\label{sec:sep}

We split the proofs of Theorems~\ref{thm:main} and \ref{thm:main 2} in four subsections. In Subsection \ref{subsec:Hq}, we prove Theorem~\ref{thm:main} when $\phi =H_q$, under weaker assumptions, providing also a quantitative result. In Subsection~\ref{subsec:BMpdom}, we extend Theorem \ref{thm:BM} to $p$-domain functionals. Finally, in Subsection~\ref{subsec:main proof} and \ref{subsec:noncentralproof} we prove Theorem~\ref{thm:main} and Theorem~\ref{thm:main 2}, respectively. 

\subsection{Proof of Theorem \ref{thm:main} when \texorpdfstring{$\phi = H_q$}{}}\label{subsec:Hq}

When $\phi=H_q$, the functional \eqref{func} is given by \begin{equation}
\label{Yq}
      Y(t_1,\ldots,t_p)= Y(t_1,\ldots,t_p)[q]:=\int_{t_1D_1\times\ldots\times t_pD_p} H_q(B_x)dx.
\end{equation}
Therefore, see Section~\ref{sec:prel}, we may express it as follows \begin{equation}\label{equ:defIGP}
    \int_{t_1D_1\times\ldots\times t_pD_p} H_q(B_x)dx = I_q(f(t_1,\ldots,t_p)),
\end{equation}
where $I_q:\mathcal H^{\odot q} \to \L^2(\Omega)$ stands for the $q$th Wiener-It\^o integral \eqref{equ:WieItoInt}, and $f(t_1,\ldots,t_p)\in \mathcal H^{\odot q}$ is given by \begin{equation}
    f(t_1,\ldots,t_p) := \int_{t_1D_1\times\ldots\times t_pD_p} e_{x}^{\otimes q} dx.
\end{equation}
Let us recall the definition of the marginal functionals \eqref{marginal funct}, and, accordingly, set
\begin{equation}\label{equ:defIGP2}
   Y_i{(t_i)}[q]=I_q\bigg(\int_{t_iD_i}(e_{x_i}^{(i)})^{\otimes q}dx_i \bigg) = I_q(f_i{(t_i)}), \quad i =1,\ldots, p.  
\end{equation}

\begin{proposition}\label{prop:suffHq}
    Let $B=(B_x)_{x\in\R^d}$ be a real-valued, continuous, centered, stationary Gaussian field with unit-variance. Let $C:\R^d \rightarrow \R$ be the covariance function of $B$ and assume it is separable in the sense of \eqref{equ:sep}. Then \begin{equation}\label{equ:varHq}
        \var(Y(t_1,\ldots,t_p)[q]) = (q!)^{1-p} \prod_{i=1}^p \var (Y_i{(t_i)}[q]).
    \end{equation}
    Moreover, the following holds: if there exists $i\in\{1,\ldots,p\}$ such that
    \begin{equation}
        \widetilde{Y}_{i}(t_i)[q]\overset{d}{\rightarrow} N(0,1) \quad \quad \text{ as } t_i \to \infty,
    \end{equation}
    then
        \begin{equation}
         \widetilde{Y}(t_1,\ldots,t_p)[q]\overset{d}{\rightarrow} N(0,1) \quad \quad \text{ as } t_1,\ldots,t_p \to \infty.
        \end{equation}
\end{proposition}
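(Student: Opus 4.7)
The plan is to lift the analysis from the field $B$ to the spectral side, where the separability assumption on $C$ manifests as a clean product structure. By Bochner's theorem and uniqueness of the spectral measure, \eqref{equ:sep} gives $G = G_1\otimes\dots\otimes G_p$, where $G_i$ is the spectral measure of $C_i$. Writing each frequency $\lambda_j\in\R^d$ in blocks $(\lambda_j^{(1)},\dots,\lambda_j^{(p)})$ with $\lambda_j^{(i)}\in\R^{d_i}$, the exponential $e^{i\langle x,\lambda_1+\dots+\lambda_q\rangle}$ factorizes across the $p$ coordinates, so by Fubini the kernel splits as
$$
f(t_1,\dots,t_p)(\lambda_1,\dots,\lambda_q) = \prod_{i=1}^p f_i(t_i)\bigl(\lambda_1^{(i)},\dots,\lambda_q^{(i)}\bigr).
$$
Combined with $G^{\otimes q} = \bigotimes_i G_i^{\otimes q}$ and Fubini again, this product form propagates to both the $\mathcal H^{\odot q}$-norm and every $r$-th contraction norm:
$$
\|f(t_1,\dots,t_p)\|_q^2 = \prod_{i=1}^p \|f_i(t_i)\|_q^2, \qquad \|f\otimes_r f\|^2_{2q-2r} = \prod_{i=1}^p \|f_i\otimes_r f_i\|^2_{2q-2r}.
$$

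The variance identity \eqref{equ:varHq} is then immediate from the Wiener-It\^o isometry $\var(I_q(h)) = q!\|h\|_q^2$ applied once to $f(t_1,\dots,t_p)$ and once to each $f_i(t_i)$.

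For the central limit statement, the strategy is to transfer between the marginal and the full functional using the Fourth Moment Theorem~\ref{thm:4th} in both directions. Let $i_0$ be an index for which $\widetilde Y_{i_0}(t_{i_0})[q]\overset{d}{\to} N(0,1)$. Applied to the normalization of $f_{i_0}(t_{i_0})$, Theorem~\ref{thm:4th} is equivalent to
$$
\frac{\|f_{i_0}\otimes_r f_{i_0}\|^2_{2q-2r}}{\|f_{i_0}\|_q^4}\longrightarrow 0 \quad \text{as } t_{i_0}\to\infty, \quad r=1,\dots,q-1.
$$
Using the two product formulas above and the elementary bound $\|g\otimes_r g\|_{2q-2r}\le \|g\|_q^2$ (a direct consequence of Cauchy-Schwarz applied to the $r$-fold inner integral defining the contraction), we get
$$
\frac{\|f\otimes_r f\|^2_{2q-2r}}{\|f\|_q^4} = \prod_{i=1}^p \frac{\|f_i\otimes_r f_i\|^2_{2q-2r}}{\|f_i\|_q^4} \le \frac{\|f_{i_0}\otimes_r f_{i_0}\|^2_{2q-2r}}{\|f_{i_0}\|_q^4} \longrightarrow 0,
$$
and another invocation of Theorem~\ref{thm:4th}, now in the forward direction for the normalized kernel of $\widetilde Y(t_1,\dots,t_p)[q]$, yields $\widetilde Y(t_1,\dots,t_p)[q]\overset{d}{\to} N(0,1)$ as $t_1,\dots,t_p\to\infty$.

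The only step requiring a bit of bookkeeping is the factorization of the contractions; once one expands the definition of $f\otimes_r f$, writes $G^{\otimes r}$ as $\bigotimes_i G_i^{\otimes r}$, and applies Fubini, the identity falls out. No analytic subtlety enters because the kernels $f_i(t_i)$ are bounded (integrals of $e^{i\langle x,\cdot\rangle}$ over compact sets) and square-integrable with respect to $G_i^{\otimes q}$, so all intermediate exchanges of integration are justified.
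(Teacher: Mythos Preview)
Your proof is correct and follows essentially the same strategy as the paper's: both establish the factorizations $\|f\|_q^2=\prod_i\|f_i\|_q^2$ and $\|f\otimes_r f\|_{2q-2r}^2=\prod_i\|f_i\otimes_r f_i\|_{2q-2r}^2$, then combine the elementary bound $\|g\otimes_r g\|\le\|g\|^2$ with the Fourth Moment Theorem. The only differences are cosmetic---you derive the factorizations on the spectral side via $G=\bigotimes_i G_i$ and the tensor splitting of the kernel $f$, whereas the paper expands everything directly as covariance integrals over $(t_1D_1\times\dots\times t_pD_p)^2$ and $(t_1D_1\times\dots\times t_pD_p)^4$; note also that the paper disposes of the trivial case $q=1$ separately (all functionals are already Gaussian), which your argument via Theorem~\ref{thm:4th} technically does not cover since that theorem is stated for $q\ge 2$.
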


\begin{proof}[Proof] Note that $\var (Y_i{(t_i)}[q])  = q! \|f_i(t_i)\|^2_q$. As a result, \eqref{equ:varHq} follows from the separability of $C$: 
\begin{align}
    \var(Y(t_1,\ldots,t_p)[q]) &= q! \int_{(t_1D_1\times \ldots \times t_pD_p)^2}C(x_1-y_1,\ldots,x_p-y_p)^q dx_1 \dots dx_p dy_1 \dots dy_p  \\
    &=  \prod_{i=1}^p {\int_{(t_i D_i)^2}C_i(x_i-y_i)^q dx_idy_i} =(q!)^{1-p} \prod_{i=1}^p \var (Y_i{(t_i)}[q]). \label{equ:varHqProof}
\end{align}
If $q=1$ everything is Gaussian and there is nothing more to show. Thus, let us assume that $q\ge2$ and let us compute the norm of the $r$th contraction of $f(t_1,\ldots,t_p)$ with itself, for $r=1,\ldots,q-1$: 
    \begin{align} \label{equ:contrHq}
    & \|f{(t_1,\dots,t_p)} \otimes_r  f{(t_1,\dots,t_p)}\|_{{2q-2r}}^2 = \\
    & = \int_{(t_1D_1\times\dots\times t_pD_p)^4} C(x-z)^rC(y-u)^rC(x-y)^{q-r}C(z-u)^{q-r}dxdydudz  \\
    & =  \prod_{i=1}^p \int_{(t_iD_i)^4} C_i(x_i-z_i)^rC_i(y_i-u_i)^rC_i(x_i-y_i)^{q-r}C_i(z_i-u_i)^{q-r}dx_idy_idu_idz_i ,
\end{align}
where $x_i$ denotes the projection of $x$ onto the $i$th block of $\mathbb R^{d}=\prod_{i=1}^p\mathbb R^{d_i}$. Let us define $\tilde f:= \frac{f}{\|f\|}$. By linearity, it immediately follows that $\widetilde Y{(t_1,\dots,t_p)}[q]=I_q(\tilde f{(t_1,\dots,t_p)})$ and $\widetilde Y_i{(t_i)}[q]=I_q(\tilde f_i{(t_i)})$. Then, combining \eqref{equ:contrHq} and \eqref{equ:varHq}, we obtain \begin{equation}\label{equ:contr}
    \|\widetilde f{(t_1,\ldots,t_p)}\otimes_r \widetilde f{(t_1,\ldots,t_p)}\|_{2q-2r}^2 =  \prod_{i=1}^p  \|\widetilde f_i{(t_i)}\otimes_r \widetilde f_i(t_i)\|_{2q-2r}^2.
\end{equation}
Since for every $f\in \mathcal H^{\otimes q}$ and $r=1,\ldots,q-1$, we have \begin{equation}\label{equ:aiuto}
    \|f\otimes_r f\|_{2q-2r} ^2 \le \|f\|_{q}^4,
\end{equation}
we obtain 
\begin{equation}
\|\widetilde f{(t_1,\dots,t_p)}\otimes_r \widetilde f{(t_1,\dots,t_p)}\|_{2q-2r}^2 \le  \min \{ \|\widetilde f_i{(t_i)}\otimes_r \widetilde f_i(t_i)\|_{2q-2r}^2 : i=1,\ldots, p\}.
\end{equation}
It remains to apply Theorem \ref{thm:4th} to conclude that, if $\widetilde Y_i(t_i)\rightarrow N(0,1)$ as $t_i\rightarrow\infty$, then $\|\widetilde f_i{(t_i)}\otimes_r \widetilde f_i(t_i)\|_{2q-2r}\to 0$ for all $r\in\{1,\ldots,q-1\}$, implying that 
$\|\widetilde f{(t_1,\dots,t_p)}\otimes_r \widetilde f{(t_1,\dots,t_p)}\|_{2q-2r}\to 0$ for all $r\in\{1,\ldots,q-1\}$, implying finally that $\widetilde Y(t_1,\ldots,t_p)\overset{d}{\rightarrow}N(0,1)$. 
\end{proof}

The following result provides the converse implication of Proposition \ref{prop:suffHq}, under additional assumptions.
\begin{proposition}\label{prop:necHq}
    Let $B=(B_x)_{x\in\R^d}$ be a real-valued, continuous, centered, stationary Gaussian field with unit-variance. Let $C:\R^d \rightarrow \R$ be the covariance function of $B$ and assume it is separable in the sense of \eqref{equ:sep}. Finally, assume either that $C^q\ge 0$ or $q=3$. Then, the following holds: if \begin{equation}
         \widetilde{Y}(t_1,\ldots,t_p)[q]\overset{d}{\rightarrow} N(0,1) \quad \quad \text{ as } t_1,\ldots,t_p \to \infty,
        \end{equation}
        then there exists at least one $i\in\{1,\ldots,p\}$ such that
        \begin{equation}
            \widetilde{Y}_{i}(t_i)[q]\overset{d}{\rightarrow} N(0,1) \quad \quad \text{ as } t_i \to \infty.
        \end{equation}
        
\end{proposition}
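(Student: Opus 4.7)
My plan is to run the argument of Proposition \ref{prop:suffHq} in reverse: apply the Fourth Moment Theorem (Theorem \ref{thm:4th}) to translate the hypothesis into the vanishing of contraction norms, use the factorization \eqref{equ:contr} to decouple across marginals, and finally identify a single index $i_0$ for which \emph{all} the reduced contractions vanish. Explicitly, the hypothesis $\widetilde Y(t_1,\ldots,t_p)[q] \overset{d}{\to} N(0,1)$ is equivalent to
\[
\prod_{i=1}^p a_{r,i}(t_i) \to 0 \qquad \text{for each } r \in \{1,\ldots,q-1\},
\]
where $a_{r,i}(t) := \|\widetilde f_i(t) \otimes_r \widetilde f_i(t)\|_{2q-2r}^2 \in [0,1]$, the upper bound being a consequence of \eqref{equ:aiuto}. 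A pigeonhole argument---if $\limsup_t a_{r,i}(t) > 0$ for every $i$, choose sequences $t_i^{(n)} \to \infty$ with $a_{r,i}(t_i^{(n)}) \ge \delta_i > 0$, making the joint product stay bounded away from zero and contradicting the hypothesis---gives, for each fixed $r$, an index $i(r)$ with $a_{r,i(r)}(t) \to 0$. The real challenge is to extract a single $i_0$ working uniformly in $r$, and this is precisely where the dichotomy ``$q=3$ or $C^q \ge 0$'' enters.

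For $q = 3$ the conclusion follows almost immediately from the exact identity
\[
\|f\otimes_1 f\|_4^2 \;=\; \|f\otimes_2 f\|_2^2 \qquad \text{for every } f \in \mathcal H^{\odot 3}.
\]
The identity is obtained by expanding $\|f \otimes_1 f\|_4^2$, swapping the order of integration by Fubini, and then using the symmetry of $f$ together with the conjugation relation $\overline{f(\cdot)} = f(-\cdot)$ and the symmetry of the spectral measure $G$ to recognize the inner integrals as $f \otimes_2 f$ evaluated at conjugate arguments (a change of variable $s' \to -s'$ at the end). Thus $a_{1,i}(t) = a_{2,i}(t)$ identically, the two factorized conditions collapse into one, and the pigeonhole step above delivers a uniform $i_0$.

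For the case $C^q \ge 0$, separability of $C$ together with $C_j(0) = 1$ forces $C_i^q \ge 0$ for every $i$ (evaluate $C$ along an axis through the origin), so that each marginal Hermite field $(H_q(B^{(i)}_{x_i}))_{x_i}$ is positively correlated. My plan here is to invoke the converse of the Breuer--Major theorem valid under such positivity (see \cite{MN22} and the references therein): for a $1$-domain $q$th-order Hermite functional, $\widetilde Y_i[q] \to N(0,1)$ if and only if $C_i \in L^q(\R^{d_i})$. Lifting this equivalence to the $p$-domain setting via the extension of Breuer--Major from Subsection \ref{subsec:BMpdom} and using the factorization $\int_{\R^d} C^q\,dx = \prod_i \int_{\R^{d_i}} C_i^q\,dx_i$ (each factor strictly positive because $C_i^q \ge 0$ and $C_i(0) = 1$) yields the chain $\widetilde Y \to N(0,1) \iff C \in L^q \iff$ all $C_i \in L^q \iff$ every marginal $\widetilde Y_i \to N(0,1)$, which is strictly stronger than the stated ``at least one''. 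I expect the main obstacle of the whole proof to lie precisely here, in the use of the converse Breuer--Major under positivity; modulo this ingredient, the argument is a clean reverse-engineering of Proposition \ref{prop:suffHq}.
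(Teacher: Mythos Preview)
Your pigeonhole argument and your treatment of $q=3$ via the identity $\|f\otimes_1 f\|_4^2 = \|f\otimes_2 f\|_2^2$ are correct and match the paper (the trivial cases $q=1,2$ are implicitly covered by the same setup). The gap is in the case $q\ge 4$ under $C^q\ge 0$: the ``converse Breuer--Major under positivity'' you invoke---that for a single $q$th Hermite functional with $C_i^q\ge 0$ one has $\widetilde Y_i[q]\to N(0,1)$ if and only if $C_i\in L^q$---is \emph{false}. The fractional Gaussian noise at the critical Hurst index $H=1-\tfrac{1}{2q}$ (so $H>\tfrac12$ and hence $r_H\ge 0$) satisfies $r_H(u)\sim c|u|^{-1/q}$, whence $r_H\notin L^q$, and yet the CLT for its $q$th Hermite variation holds, with logarithmic normalisation; see the last line of \eqref{equ:exaFBsRates} in Example~\ref{exa:fBs}. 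Thus the implication $\widetilde Y\to N(0,1)\Rightarrow C\in L^q$ breaks, and so does your chain of equivalences; this is not a result one can cite from \cite{MN22}, which on the contrary stresses that long-range dependence can coexist with Gaussian fluctuations.

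The paper sidesteps any integrability characterisation. From the factorisation \eqref{equ:contr}, convergence of $\widetilde Y[q]$ forces the $2$-contraction of \emph{some} marginal $\widetilde f_i$ to vanish, and the remaining work is local to that $i$: for $r\in\{3,\dots,q-1\}$ one splits the integral defining $\|f_i(t_i)\otimes_r f_i(t_i)\|^2$ over the set $A_{t_i}=\{|C_i(x-z)C_i(y-u)|\le a_{t_i}|C_i(x-y)C_i(z-u)|\}$ and its complement. On $A_{t_i}$ one trades the exponent $r$ down to $2$ at the cost of a factor $a_{t_i}^{r-2}$; on the complement one trades it up to $q$ at the cost of $a_{t_i}^{-(q-r)}$. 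The hypothesis $C_i^q\ge 0$ is precisely what lets one drop the absolute values after each trade and land exactly on $\|f_i(t_i)\otimes_2 f_i(t_i)\|^2$ and $\|f_i(t_i)\|^4$. Optimising $a_{t_i}$ gives $\|\widetilde f_i(t_i)\otimes_r\widetilde f_i(t_i)\|^2\lesssim \|\widetilde f_i(t_i)\otimes_2\widetilde f_i(t_i)\|^{2(q-r)/(q-2)}\to 0$, and the remaining case $r=1$ follows from the general symmetry $\|f\otimes_1 f\|=\|f\otimes_{q-1}f\|$. This is self-contained and works precisely at the critical regularity where your proposed equivalence fails.
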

\begin{proof} When $q=1$, we have that $\widetilde Y$ and the $\widetilde Y_i$'s are Gaussian, 
meaning that the statement is correct but empty. 

So, let us assume from now on that $q\ge2$ and that $\widetilde{Y}(t_1,\ldots,t_p)[q]\overset{d}{\rightarrow} N(0,1)$. By the Fourth Moment Theorem~\ref{thm:4th} one has that 
$\|\widetilde f{(t_1,\ldots,t_p)}\otimes_r \widetilde f{(t_1,\ldots,t_p)}\|_{2q-2r}\to 0$
as $t_1,\ldots,t_p\rightarrow\infty$ for any $r\in\{1,\dots,q-1\}$.

When $q=2$, there is only one contraction to consider. Looking at 
\eqref{equ:contr}, we deduce that at least one of the factors in \eqref{equ:contr} must go to $0$ which, by Theorem~\ref{thm:4th}, implies that at least one among the $\widetilde Y_i(t_i)$'s must have a Gaussian limit. 

Consider now the case where $q= 3$.
By Fubini, we have for every $i=1,\ldots, p$
\begin{equation}\label{equ:symContr}    \|\widetilde f_i(t_i)\otimes_{1} \widetilde f_i(t_i)\|_{4}^2=\|\widetilde f_i(t_i)\otimes_{2}\widetilde f_i(t_i)\|_{2}^2, \end{equation}
and \eqref{equ:contr} allows again to conclude. 

Finally, let us suppose that $q\ge4$ and $C^q\ge0$, that is either $q$ even, or $q$ odd and $C\ge0$. 
Since the second contraction satisfies 
\begin{equation}
    \|\widetilde f({t_1,\ldots,t_p})\otimes_{2} \widetilde f(t_1,\ldots,t_p)\|_{2q-4}\longrightarrow 0, \quad \text{as }t_1,\ldots,t_p \rightarrow\infty,
\end{equation}
we deduce from \eqref{equ:contr} that there exists $i\in\{1,\ldots,p\}$ such that
\begin{equation}
    \label{equ:midContr}
    \|\widetilde f_i{(t_i)}\otimes_{2} \widetilde f_i(t_i)\|_{2q-4}^2\rightarrow 0 \text{ as } t_i\rightarrow\infty.
\end{equation}
Let us show that this is sufficient to conclude about the asymptotic normality of $\widetilde Y_i(t_i)[q]$ when $C^q\geq 0$. For this, let us consider a positive sequence $(a_{t_i})_{t_i>0}$ with $a_{t_i}\rightarrow\infty$ as $t_i\rightarrow\infty$ (the exact expression of $a_{t_i}$ will be made precise later) and define the subset $A_{t_i}$ of $\mathbb{R}^{4d_i}$ by 
$$A_{t_i}=\left\{(x_i,y_i,u_i,z_i)\in\mathbb{R}^{4d_i} : |C_i(x_i-z_i)C_i(y_i-u_i)|\le a_{t_i} |C_i(x_i-y_i)C_i(z_i-u_i)|\right\}.$$
We can then write, for every $ 3\le r\le q-1$:
\begin{align}
    &\|  f_i(t_i)\otimes_{ r }   f_i(t_i)\|_{2q-2r}^2=\\
    &=\int_{(t_iD_i)^4} C_i(x_i-z_i)^rC_i(y_i-u_i)^rC_i(x_i-y_i)^{q-r}C_i(z_i-u_i)^{q-r} dx_idy_idu_idz_i \\
    &\le\int_{(t_iD_i)^4\cap A_{t_i}} |C_i(x_i-z_i)^rC_i(y_i-u_i)^rC_i(x_i-y_i)^{q-r}C_i(z_i-u_i)^{q-r}|  dx_idy_idu_idz_i \\
    &\hspace{1.3cm} +\int_{(t_iD_i)^4\cap (\mathbb{R}^{4d_i}\setminus A_{t_i})} |C_i(x_i-z_i)^rC_i(y_i-u_i)^rC_i(x_i-y_i)^{q-r}C_i(z_i-u_i)^{q-r}| \\
    & \hspace{10.8cm} dx_idy_idu_idz_i \\
    & \le a_{t_i}^{r-2}\int_{(t_iD_i)^4}C_i(x_i-z_i)^2C_i(y_i-u_i)^2|C_i(x_i-y_i)|^{q-2}|C_i(z_i-u_i)|^{q-2}  dx_idy_idu_idz_i  \\
    &\hspace{1.3cm}  +a_{t_i}^{-(q-r)}{\int_{(t_i D_i)^4}|C_i(x_i-z_i)|^q |C_i(y_i-u_i)|^{q} dx_idy_idu_idz_i}\\
    & = a_{t_i}^{r-2}\int_{(t_iD_i)^4}C_i(x_i-z_i)^2C_i(y_i-u_i)^2C_i(x_i-y_i)^{q-2}C_i(z_i-u_i)^{q-2}  dx_idy_idu_idz_i  \\
    &\hspace{1.3cm}  +a_{t_i}^{-(q-r)}{\int_{(t_i D_i)^4}C_i(x_i-z_i)^q C_i(y_i-u_i)^{q} dx_idy_idu_idz_i}\\
    &=a_{t_i}^{r-2}\|  f_i(t_i)\otimes_{2}  f_i(t_i)\|_{2q-4}^2+a_{t_i}^{-(q-r)}\| f_i(t_i)\|_{q}^4.
\end{align}

Summarizing, we have, for every $r\in\{3,\dots,q-1\}$,
\begin{equation}
    \label{equ:bound}
    \| \widetilde  f_i(t_i)\otimes_{ r }  \widetilde  f_i(t_i)\|_{2q-2r }^2\le a_{t_i}^{r-2}\|  \widetilde f_i(t_i)\otimes_{2}  \widetilde f_i(t_i)\|_{2q-4}^2+a_{t_i}^{-(q-r)}.
\end{equation}
Now, let us choose $a_{t_i} =\big(\|\widetilde  f_i({t_i})\otimes_{2} \widetilde  f_i({t_i})\|_{2q-4}\big)^{\frac2{2-q}}$ and
observe that $a_{t_i}\to\infty$ as $t_i\to\infty$.
Plugging into \eqref{equ:bound} allows to obtain, for every $r\in\{3,\dots,q-1\}$,
$$
\| \widetilde  f_i({t_i})\otimes_{ r }  \widetilde  f_i({t_i})\|_{2q-2r }^2\leq 2
\big(\|\widetilde  f_i({t_i})\otimes_{2} \widetilde  f_i({t_i})\|_{2q-4}\big)^{\frac{2(q-r)}{q-2}}
\to 0\quad\mbox{as $t_i\to\infty$},
$$
But this convergence of the $r$th contraction to zero also holds for $r=1$, since the norms of the $1$-contraction and the $(q-1)$-contraction are equal.
Finally, the desired conclusion follows from the Fourth Moment Theorem~\ref{thm:4th}. \end{proof}

\begin{proposition}\label{prop:quantHq}
    Let the same notations and assumptions of Proposition~\ref{prop:suffHq} prevail. In particular $N\sim N(0,1)$.
    Then, the following estimate holds: 
    \begin{align}\label{equ:quantHqLe}
        &d_{TV}(\widetilde Y{(t_1,\ldots,t_p)}[q],N)\le c_q \, \prod_{i=1}^p \sqrt{\E\big[\widetilde Y_i[q]^4\big]-3}, \\
        &{d_{W}(\widetilde Y{(t_1,\ldots,t_p)}[q],N)\le \frac{c_q}{\sqrt{\pi}} \, \prod_{i=1}^p \sqrt{\E\big[\widetilde Y_i[q]^4\big]-3},}
    \end{align}
    where $c_q=\sqrt{\frac{4}{q}\sum_{r=1}^{q-1}rr!^2\binom{q}{r}^4(2q-2r)!}$.
\end{proposition}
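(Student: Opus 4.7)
The plan is to combine the quantitative Fourth Moment Theorem (Theorem~\ref{thm:4thQuant}) with the separability identity \eqref{equ:contr} already established in the proof of Proposition~\ref{prop:suffHq}. I will do everything at the level of contraction norms, so the proof reduces to bookkeeping.

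First, I would apply the first inequality of Theorem~\ref{thm:4thQuant} to $\widetilde Y(t_1,\ldots,t_p)[q]=I_q(\widetilde f(t_1,\ldots,t_p))$, which has unit variance. Combined with the first expression for $\E[I_q(h)^4]-3\E[I_q(h)^2]^2$ in that theorem, this yields
\begin{equation}
d_{TV}(\widetilde Y(t_1,\ldots,t_p)[q],N)^2\le \frac{4}{q}\sum_{r=1}^{q-1} rr!^2\binom{q}{r}^4(2q-2r)!\,\|\widetilde f(t_1,\ldots,t_p)\,\widetilde\otimes_r\,\widetilde f(t_1,\ldots,t_p)\|_{2q-2r}^2.
\end{equation}
Since symmetrization is an orthogonal projection on $\mathcal H^{\otimes 2q-2r}$, we have $\|h\,\widetilde\otimes_r\,h\|\le\|h\otimes_r h\|$, so that the symmetrized contractions can be replaced by ordinary ones without affecting the inequality.

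Next, I apply the identity \eqref{equ:contr} proved in Proposition~\ref{prop:suffHq}, which factorizes each contraction of $\widetilde f(t_1,\ldots,t_p)$ as a product of contractions of the marginal kernels:
\begin{equation}
\|\widetilde f(t_1,\ldots,t_p)\otimes_r \widetilde f(t_1,\ldots,t_p)\|_{2q-2r}^2=\prod_{i=1}^p\|\widetilde f_i(t_i)\otimes_r \widetilde f_i(t_i)\|_{2q-2r}^2.
\end{equation}
To convert each factor on the right into the quantity $\E[\widetilde Y_i[q]^4]-3$ appearing in the statement, I invoke the \emph{second} expression in Theorem~\ref{thm:4thQuant}, now applied to $\widetilde Y_i(t_i)[q]=I_q(\widetilde f_i(t_i))$. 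Since all summands there are non-negative, one extracts the clean termwise bound
\begin{equation}
\|\widetilde f_i(t_i)\otimes_r\widetilde f_i(t_i)\|_{2q-2r}^2\le \frac{1}{q!^2\binom{q}{r}^2}\bigl(\E[\widetilde Y_i(t_i)[q]^4]-3\bigr)\le \E[\widetilde Y_i(t_i)[q]^4]-3,
\end{equation}
valid for every $r\in\{1,\ldots,q-1\}$ (using $q!\binom{q}{r}\ge 1$).

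Finally, I combine these estimates. Because the product $\prod_{i=1}^p(\E[\widetilde Y_i[q]^4]-3)$ does not depend on $r$, it factors out of the sum over $r$, leaving
\begin{equation}
d_{TV}(\widetilde Y(t_1,\ldots,t_p)[q],N)^2\le \Bigl(\tfrac{4}{q}\sum_{r=1}^{q-1} rr!^2\binom{q}{r}^4(2q-2r)!\Bigr)\prod_{i=1}^p\bigl(\E[\widetilde Y_i(t_i)[q]^4]-3\bigr)=c_q^2\prod_{i=1}^p\bigl(\E[\widetilde Y_i(t_i)[q]^4]-3\bigr),
\end{equation}
and taking square roots gives exactly \eqref{equ:quantHqLe}. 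There is no real obstacle here: the non-trivial algebraic content is \eqref{equ:contr}, which is already available, and the rest is a direct assembly of the two forms of Theorem~\ref{thm:4thQuant} together with the standard symmetrization bound.
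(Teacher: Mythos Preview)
Your proof is correct and follows essentially the same approach as the paper: both combine the quantitative Fourth Moment Theorem (Theorem~\ref{thm:4thQuant}), the symmetrization bound $\|h\,\widetilde\otimes_r\,h\|\le\|h\otimes_r h\|$, the separability identity \eqref{equ:contr}, and the second expression in Theorem~\ref{thm:4thQuant} to bound each marginal contraction norm by $\E[\widetilde Y_i[q]^4]-3$. The only cosmetic difference is that the paper first passes to $\max_r\|\widetilde f\otimes_r\widetilde f\|$ and then factorizes, whereas you keep the full sum over $r$ and factor the $r$-independent product out at the end; both routes yield the same constant $c_q$.
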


\begin{proof}[Proof of Proposition~\ref{prop:quantHq}]
    Recall the result of Theorem \ref{thm:4thQuant}, namely
    \begin{equation}\label{coco}
    d_{TV}(\widetilde Y{(t_1,\ldots,t_p)}[q],N)\le \frac{2}{\sqrt{3}} \sqrt{\E\big[(\widetilde Y{(t_1,\ldots,t_p)}[q])^4\big]-3},
    \end{equation}
    and
    \begin{align}
        & \E\big[(\widetilde Y{(t_1,\ldots,t_p)}[q])^4\big]-3 \label{coco2}\\
        & = \frac3q\sum_{r=1}^{q-1} rr!^2\binom{q}{r}^4(2q-2r)!
\|\widetilde f(t_1\ldots,t_p)\widetilde{\otimes}_r \widetilde f(t_1\ldots,t_p)\|^2_{2q-2r}\\
        &= \sum_{r=1}^{q-1} q!^2\binom{q}{r}^2\bigg\{\|\widetilde f(t_1\ldots,t_p)\otimes_r \widetilde f(t_1\ldots,t_p)\|^2_{2q-2r} \\
        & \hspace{6cm}
        +\binom{2q-2r}{q-r}\|\widetilde f(t_1\ldots,t_p)\widetilde{\otimes}_r \widetilde f(t_1\ldots,t_p)\|^2_{2q-2r}\bigg\}.
    \end{align}
    We deduce 
    \begin{align}
         d_{TV}(\widetilde Y{(t_1,\ldots,t_p)}[q],N)
        &\le \frac{2}{\sqrt{3}} \sqrt{\E\big[(\widetilde Y{(t_1,\ldots,t_p)}[q])^4\big]-3}\\
        &\le c_q\,\max_{r=1,\ldots,q-1} \|\widetilde f(t_1\ldots,t_p)\otimes_r  \widetilde  f(t_1\ldots,t_p)\|^2_{2q-2r}.
    \end{align}
    But, thanks to \eqref{equ:contr} and recalling \eqref{equ:aiuto}, 
   we have 
    \begin{align}
        \max_{r=1,\ldots,q-1} \|\widetilde f(t_1\ldots,t_p)\otimes_r  \widetilde  f(t_1\ldots,t_p)\|^2_{2q-2r}
        &= \max_{r=1,\ldots,q-1} \prod_{i=1}^p \|\widetilde f_i(t_i)\otimes_r \widetilde f_i(t_i)\|^2_{2q-2r}\\
        &\le  \prod_{i=1}^p \max_{r=1,\ldots,q-1} \|\widetilde f_i(t_i)\otimes_r \widetilde f_i(t_i)\|^2_{2q-2r}.
    \end{align}
    Also, as a consequence of the second equality in \eqref{coco2} (with $\widetilde{Y}_i$ instead of $\widetilde{Y}$), we have
    $$
    \|\widetilde f_i(t_i)\otimes_r \widetilde f_i(t_i)\|^2_{2q-2r} \leq \E\big[\widetilde Y_i[q]^4\big]-3,
    $$
    and the desired conclusions now easily follows.
\end{proof}

\begin{remark}\label{rem:discrete}
    In Example~\ref{exa:fBs} below we will use the previous result to improve the bound for the central convergence of the rescaled $q$th Hermite variation of the rectangular increments of the fractional Brownian sheet obtained in \cite{RST}.
\end{remark}

{\begin{remark}
   Rates of convergence for smooth distances can be obtained by means of Malliavin-Stein method for more general functions $\varphi$. In the case where $C^R \in L^1(\mathbb R^d)$, where $R$ is the Hermite rank of $\varphi$, (equivalently $C_i^R \in L^1(\mathbb R^{d_i})$ for every $i=1,\ldots,p$), several statements can be found in \cite{quantitative}, where no additional smoothness of the function $\varphi$ is required. However, we remark that in this case all the terms of the chaotic expansion of the functional contribute to the limit. Conversely, when $C_i \notin L^R(\R^{d_i})$ for some $i=1,\ldots,p$, then a bound in Wasserstein distance could be proved. Indeed, Proposition~\ref{prop:redP} shows how to control $E[(\widetilde Y - \widetilde Y[R])^2]$, see e.g. \eqref{eq:L2redP}. However, we again have a non-trivial contribution of every term of the chaotic decomposition, leading to a sub-optimal and less meaningful rates. 
\end{remark}}

\subsection{A Breuer-Major theorem for \texorpdfstring{$p$}{p}-domain functionals}\label{subsec:BMpdom}

In this subsection, we provide an extension of the celebrated Breuer-Major theorem from the classical setting of $1$-domain functionals (see Theorem \ref{thm:BM}) to the setting of $p$-domain functionals.
\begin{theorem}\label{thm:BMpdom}
    Let $B=(B_x)_{x\in\R^d}$ be a real-valued, continuous, centered, stationary Gaussian field with unit-variance. Let $\f:\R \rightarrow \R$ be a measurable function  satisfying $\E[\varphi^2(N)]<\infty$, $N\sim N(0,1)$, with Hermite rank $R\ge 1$. Let us consider $\widetilde Y$ as in \eqref{mainquestion}. Let $C:\R^d \rightarrow \R$ be the covariance function of $B$ and assume it is separable in the sense of \eqref{equ:sep}. If $C_i\in L^R(\R^{d_i})$ for any $i\in\{1,\ldots,p\}$, then, as $t_1,\ldots,t_p\to\infty$,
        \begin{equation}
        \frac{Y(t_1,\ldots,t_p)-\E[Y(t_1,\ldots,t_p)]}{t_1^{d_1/2}\ldots t_p^{d_p/2}} \os{d}{\rightarrow} N(0,\sigma^2),
        \end{equation}
    where  \begin{equation}
            \sigma^2 = \sum_{q\ge R} a_q^2 q!  \prod_{i=1}^p \vol(D_i)\int_{\R^{d_i}} C_i^q(z_i) dz_i.
        \end{equation}
    Moreover, if $\sigma^2>0$ then 
    \begin{equation}
        \var(Y(t_1,\ldots,t_p)) \sim \sigma^2\,t_1^{d_1}\ldots t_p^{d_p}
    \end{equation}
    and we have a central limit theorem for \eqref{mainquestion}, that is
    \[
    \widetilde{Y}(t_1,\dots,t_p)\os{d}{\rightarrow} N(0,1) \quad \text{ as }t_1,\ldots,t_p\rightarrow\infty.
    \]
\end{theorem}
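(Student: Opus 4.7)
The plan is to decompose $Y$ chaos-wise and reduce to Proposition~\ref{prop:suffHq} combined with the classical 1-domain Breuer--Major Theorem~\ref{thm:BM}. Writing $\varphi$ via \eqref{hermitedecomp} and setting $Y[q](t_1,\ldots,t_p):=\int_{t_1D_1\times\cdots\times t_pD_p}H_q(B_x)dx$, chaos orthogonality gives
\[
Y(t_1,\ldots,t_p)-\E[Y(t_1,\ldots,t_p)]=\sum_{q\ge R}a_q\,Y[q](t_1,\ldots,t_p),\qquad \var(Y)=\sum_{q\ge R}a_q^2\,\var(Y[q]).
\]
The argument then splits into variance asymptotics, a chaos-wise CLT, and a tail/truncation step.

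For the variance asymptotics, the separability computation in \eqref{equ:varHqProof} gives
\[
\var(Y[q](t_1,\ldots,t_p))=q!\prod_{i=1}^p\int_{(t_iD_i)^2}C_i(x_i-y_i)^q\,dx_i\,dy_i.
\]
Since $|C_i|\le 1$ (unit variance) and $C_i\in \L^R(\R^{d_i})$, one has $|C_i|^q\le|C_i|^R$ for every $q\ge R$, hence $C_i^q\in \L^1(\R^{d_i})$. A standard change of variables plus dominated convergence yields
\[
t_i^{-d_i}\!\int_{(t_iD_i)^2}\!C_i(x_i-y_i)^q\,dx_idy_i\longrightarrow \vol(D_i)\!\int_{\R^{d_i}}\!C_i(z_i)^q\,dz_i
\]
as $t_i\to\infty$. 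Taking the product over $i$ and then summing over $q$ (the swap of limit and sum being legitimate by the same uniform bound) yields $\var(Y)/(t_1^{d_1}\cdots t_p^{d_p})\to\sigma^2$.

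For the chaos-wise CLT, fix $q\ge R$: Theorem~\ref{thm:BM} applied to the 1-domain field $B^{(i)}$ (with $C_i^q\in \L^1$) gives $\widetilde Y_i(t_i)[q]\overset{d}{\rightarrow}N(0,1)$, and Proposition~\ref{prop:suffHq} promotes this to $\widetilde Y(t_1,\ldots,t_p)[q]\overset{d}{\rightarrow}N(0,1)$. Rescaling with the variance limit obtained above,
\[
\frac{a_q\,Y[q](t_1,\ldots,t_p)}{(t_1^{d_1}\cdots t_p^{d_p})^{1/2}}\overset{d}{\rightarrow} N\!\left(0,\;a_q^2\,q!\,\prod_{i=1}^p\vol(D_i)\!\int_{\R^{d_i}}\!C_i^q\right).
\]
Since components in distinct Wiener chaoses that individually converge to Gaussians converge jointly to independent Gaussians (Peccati--Tudor), for each $Q\ge R$ one obtains joint convergence of the truncated sum $\sum_{q=R}^Q a_qY[q]/(t_1^{d_1}\cdots t_p^{d_p})^{1/2}$ to $N(0,\sum_{q=R}^Q \sigma_q^2)$ with $\sigma_q^2=a_q^2 q!\prod_i\vol(D_i)\int C_i^q$.

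Finally, the tail is controlled as follows: by chaos orthogonality, the variance of the remainder $\sum_{q>Q}a_qY[q]/(t_1^{d_1}\cdots t_p^{d_p})^{1/2}$ is bounded \emph{uniformly} in $(t_1,\ldots,t_p)$ by $\sum_{q>Q}a_q^2\,q!\prod_i\vol(D_i)\int_{\R^{d_i}}|C_i|^R$, using $|C_i|^q\le|C_i|^R$ once more. This tail vanishes as $Q\to\infty$ since $\sum_{q\ge R}a_q^2\,q!=\var(\varphi(N))<\infty$, so a routine Slutsky-type argument produces the CLT for the full sum. When $\sigma^2>0$, dividing by $\sqrt{\var(Y)}\sim\sigma\,(t_1^{d_1}\cdots t_p^{d_p})^{1/2}$ then gives $\widetilde Y(t_1,\ldots,t_p)\overset{d}{\rightarrow}N(0,1)$. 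The only delicate step is this uniform-in-$(t_1,\ldots,t_p)$ tail control across infinitely many chaoses; the pointwise bound $|C_i|^q\le|C_i|^R$ (valid since $|C_i|\le 1$) is the key ingredient making both the variance asymptotics and the Slutsky truncation go through.
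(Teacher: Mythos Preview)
Your proof is correct and follows essentially the same route as the paper: both arguments decompose $Y$ into Wiener chaoses, establish the chaos-wise variance asymptotics and CLT (the paper by directly checking the contraction condition in \cite[Theorem~6.3.1]{bluebook}, you by invoking Theorem~\ref{thm:BM} together with Proposition~\ref{prop:suffHq}, which internally performs the same contraction factorization), and then control the tail uniformly via $|C_i|^q\le|C_i|^R$. One small point to tighten: Theorem~\ref{thm:BM} only yields $\widetilde Y_i(t_i)[q]\overset{d}{\rightarrow}N(0,1)$ when $\int_{\R^{d_i}}C_i^q>0$, so you should note that in the complementary case $\sigma_q^2=0$ and the $q$th chaos contribution vanishes in $L^2$, which is harmless.
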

\begin{remark}
The previous theorem may be seen as a Breuer-Major theorem for $p$-domains. Its proof follows from similar arguments to the ones in \cite[Theorem 7.2.4]{bluebook}. However, we observe that the separable assumption allows us to recover the rate of the variance as a function of the (possibly distinct) growth rates of the domains. 
\end{remark} 
\begin{proof}[Proof of Theorem~\ref{thm:BMpdom}]
We proceed as in the proof of \cite[Theorem 7.2.4]{bluebook}, using \cite[Theorem 6.3.1]{bluebook}.
First of all, we have
\begin{align}
    \frac{Y(t_1,\dots,t_p)-\E[Y(t_1,\dots,t_p)]}{t_1^{d_1/2}\ldots t_p^{d_p/2}} &= \frac{\sum_{q=R}^\infty a_q \int_{t_1D_1 \times \dots \times t_pD_p}H_q(B_x)dx}{t_1^{d_1/2}\ldots t_p^{d_p/2}}\\&=\sum_{q=R}^\infty I_q(f_q(t_1,\dots,t_p)),
\end{align}
where 
\[
f_{q}(t_1,\dots,t_p):=\frac{a_q\int_{t_1D_1 \times \dots\times t_pD_p}e_x^{\otimes q}dx}{t_1^{d_1/2}\ldots t_p^{d_p/2}}.
\]
To conclude the proof we only need to check the conditions (a)-(d) of \cite[Theorem 6.3.1]{bluebook}.
\bigskip 

\textit{Condition (a):} We need to check that
$$
\sigma^2[q]:=\lim_{t_1,\ldots,t_p\to\infty}q!\|f_q(t_1,\dots,t_p)\|^2
$$
exists in $[0,\infty)$ for each $q\geq R$.
Since $C_i\in L^R(\R^{d_i})$ (implying $C_i\in L^q(\R^{d_i})$) for every $i\in\{1,\ldots,p\}$, the change of variable $z_i=x_i-y_i$ yields
\begin{align}
    \label{formuletta}\int_{(t_iD_i)^2}C_i^q(x_i-y_i)dx_idy_i&=t_i^{d_i}\int_{\R^{d_i}}C_i^q(z_i)\vol(D_i\cap (D_i+z_i/t_i))dz_i\\
    &\sim t_i^{d_i}\vol(D_i)\int_{\R^{d_i}}C_i^q(z_i)dz_i, \quad \quad \text{as }t_i\rightarrow \infty.
\end{align}
Combining the previous equivalent with the separability of $C$, we obtain
\begin{align}
q!\|f_q(t_1,\dots,t_p)\|^2=q!a_q^2\frac{\prod_{i=1}^p\int_{(t_iD_i)^2}C_i^q(x_i-y_i)dx_idy_i}{t_1^{d_1/2}\ldots t_p^{d_p/2}}\rightarrow \sigma^2[q],
\end{align}
as $t_1,\ldots,t_p\to \infty$, where
\[
\sigma^2[q]:=q!a_q^2
\prod_{i=1}^p\vol(D_i)\int_{\R^{d_i}}C_i^q(z_i)dz_i.
\]

\bigskip

\textit{Condition (b):} We need to check that
\[
\sum_{q=R}^\infty \sigma^2[q]<\infty.
\]
Since $\|C_i\|_\infty\le 1$ for every $i=1,\dots,p$, we have 
\[
\sum_{q=R}^\infty \sigma^2[q]\le 
\left( \sum_{q=R}^\infty q!a_q^2\right)
\prod_{i=1}^p
\left(
\vol(D_i)
\int_{\R^{d_i}}|C_i(z_i)|^Rdz\right).
\]
Thanks to
$\var(\varphi(N))=\sum_{q=R}^\infty q!a_q^2<\infty$, the claim follows.

\bigskip

\textit{Condition (c):} We need to check that for every $q\ge R$ and every $r=1,\dots,q-1$
\[
\|f_{q}(t_1,\dots,t_p)\otimes_r f_{q}(t_1,\dots,t_p)\|^2\rightarrow 0 \quad \quad \text{ as }t_1,\ldots,t_p\rightarrow\infty.
\]
Since $C$ is separable, we deduce from \eqref{equ:contrHq} that
\[
\|f_{q}(t_1,\dots,t_p)\otimes_r f_{q}(t_1,\dots,t_p)\|^2=\prod_{i=1}^p\|f_{i,q}(t_i)\otimes_r f_{i,q}(t_i)\|^2,
\]
where $f_{i,q}(t_i)$ is given by
\[
f_{i,q}(t_i):=\int_{t_iD_i}(e_{x_i}^{(i)})^{\otimes q}dx_i.
\]
Therefore, it is enough to prove that
\begin{equation}\label{toshow}
\|f_{i,q}(t_i)\otimes_r f_{i,q}(t_i)\|^2\rightarrow 0
\end{equation}
for at least one $i\in\{1,\ldots,p\}$ as $t_i\rightarrow\infty$.
But \eqref{toshow} is actually true for any $i\in\{1,\ldots,p\}$,
see indeed point (c) in the proof of \cite[Theorem 7.2.4]{bluebook} (which uses that $C_i\in L^R(\R^{d_i})$).

\bigskip

\textit{Condition (d):} We need to check that
\[
\lim_{N\rightarrow\infty}\sup_{t_1,\ldots,t_p\geq 1}\sum_{q=N+1}^\infty q!\|f_{q}(t_1,\dots,t_p)\|^2=0.
\]
By \eqref{formuletta} and since $\|C_i\|_{\infty}\le 1$ for every $i=1,\dots,p$, we have
\begin{align*}
    &\sup_{t_1,\ldots,t_p\geq 1}\sum_{q=N+1}^\infty q!\|f_{q}(t_1,\dots,t_p)\|^2\\
    =&\sup_{t_1,\ldots,t_p\geq 1}\sum_{q=N+1}^\infty q!a_q^2\frac{\prod_{i=1}^p\int_{(t_iD_i)^2}C_i^q(x_i-y_i)dx_idy_i}{t_1^{d_1/2}\ldots t_p^{d_p/2}}\\
\le& \left( \sum_{q=N+1}^\infty q!a_q^2\right)\prod_{i=1}^p \left( \vol(D_i) \int_{\R^{d_i}}|C_i(z_i)|^Rdz_i\right)
\rightarrow 0 \quad \quad \text{as }N\rightarrow\infty,
\end{align*}
where $\left( \sum_{q=N+1}^\infty q!a_q^2\right)\rightarrow 0$ being the tail of a convergent series.
\end{proof}

\subsection{Reduction to \texorpdfstring{$R$}{R}th chaos and proof of Theorem~\ref{thm:main}}\label{subsec:main proof}

The chaotic decomposition of \eqref{func} is 
\begin{equation}\label{equ:chaosDecY}
    Y(t_1,\dots,t_p)=\E[{Y}(t_1,\ldots,t_p)]+ \sum_{q\ge R} a_q Y(t_1,\ldots,t_p)[q],
\end{equation}
see \eqref{Yq}-\eqref{equ:defIGP}. In particular, this decomposition gives
\begin{equation}
    \label{equ:vardecomp} \var(Y(t_1,\dots,t_p)) = \sum_{q=R}^\infty a_q^2 \var(Y(t_1,\dots,t_p)[q]).
\end{equation}
In order to prove Theorem \ref{thm:main}, we reduce the study of the functional $\widetilde Y(t_1,\dots,t_p)$ to that of $\widetilde{Y}(t_1,\dots,t_p)[R]$, the normalization of ${Y}(t_1,\dots,t_p)[R]$, thanks to the following extension of \cite[Proposition 4]{MN22}.

\begin{proposition}\label{prop:redP}
Let $B=(B_x)_{x\in\R^d}$ be a real-valued, continuous, centered, stationary Gaussian field with unit-variance. Let $\f:\R \rightarrow \R$ be a measurable function  satisfying $\E[\varphi^2(N)]<\infty$, $N\sim N(0,1)$, with Hermite rank $R$. Let us consider $\widetilde Y$ as in \eqref{mainquestion}, and $\widetilde Y[R]$ as above. Let $C:\R^d \rightarrow \R$ be the covariance function of $B$, and assume that it is separable in the sense of \eqref{equ:sep} and that it satisfies the following two hypotheses:
\begin{enumerate}
    \item $C^R\ge0$, that is, $C_i^R\ge0$ for every $i=0,\dots,p$;
    \item for some $j\in\{1,\ldots,p\}$, we have $C_j\in \bigcup_{M=R+1}^\infty \L^M(\R^{d_j})\setminus\L^R(\R^{d_j})$.
\end{enumerate}
    Then, with ${\rm sgn}(a_R)$ denoting the sign of the $R$th Hermite coefficient in \eqref{hermitedecomp},
    \begin{equation}
         \E\left[\left({\rm sgn}(a_R)\widetilde{Y}(t_1,\ldots,t_p)[R]-\widetilde{Y}(t_1,\ldots,t_p)\right)^2\right]\rightarrow 0 \quad \quad \text{ as } t_1,\ldots,t_p \to \infty.
    \end{equation}
\end{proposition}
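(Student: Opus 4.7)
\medskip
\noindent\textbf{Proof proposal.} The plan is to reduce the statement to showing that the $R$th chaos dominates in the $L^2$ decomposition of $Y$. Starting from \eqref{equ:chaosDecY} and writing $\sigma_q^2:=\var(Y[q])$, $\sigma^2:=\var(Y)=\sum_{q\ge R}a_q^2\sigma_q^2$, the orthogonality of distinct Wiener chaoses together with the identity $\widetilde Y[R]=Y[R]/\sigma_R$ gives, after a direct expansion,
\[
\E\!\left[\left({\rm sgn}(a_R)\widetilde Y[R]-\widetilde Y\right)^2\right]=2\left(1-\frac{|a_R|\sigma_R}{\sigma}\right),
\]
so it suffices to prove that
\[
\frac{\sum_{q>R}a_q^2\sigma_q^2}{a_R^2\sigma_R^2}\longrightarrow 0\quad\text{as}\quad t_1,\ldots,t_p\to\infty.
\]

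Next, the separability of $C$ (used as in the proof of Proposition \ref{prop:suffHq}) allows one to factorise each variance: with $I_{i,q}(t_i):=\int_{(t_iD_i)^2}|C_i(x_i-y_i)|^q\,dx_i\,dy_i$,
\[
\sigma_q^2\le q!\prod_{i=1}^p I_{i,q}(t_i)\quad\text{and}\quad \sigma_R^2= R!\prod_{i=1}^p I_{i,R}(t_i),
\]
where the equality uses the hypothesis $C_i^R\ge 0$, so $C_i^R=|C_i|^R$. Since $|C_i|\le 1$, we have $I_{i,q}(t_i)\le I_{i,R}(t_i)$ for every $i$ and every $q\ge R$. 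To extract a quantitative gain from the distinguished index $j$, I would split $|C_j|^q$ at a threshold $\epsilon\in(0,1)$:
\[
I_{j,q}(t_j)\le\epsilon^{q-R}I_{j,R}(t_j)+\vol\big\{(x,y)\in(t_jD_j)^2:|C_j(x-y)|>\epsilon\big\},
\]
and bound the last volume, via Chebyshev and the hypothesis $C_j\in L^M(\R^{d_j})$, by $\epsilon^{-M}\vol(D_j)\|C_j\|_{L^M}^M\,t_j^{d_j}$.

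Substituting into the ratio and summing over $q>R$ would yield an estimate of the form
\[
\frac{\sum_{q>R}a_q^2\sigma_q^2}{a_R^2\sigma_R^2}\le \frac{\sum_{q>R}a_q^2\,q!}{a_R^2\,R!}\left(\epsilon+\frac{\epsilon^{-M}\vol(D_j)\|C_j\|_{L^M}^M\,t_j^{d_j}}{I_{j,R}(t_j)}\right),
\]
with $\sum_{q>R}a_q^2 q!\le \var(\varphi(N))<\infty$. The crucial point is then to show that $t_j^{d_j}/I_{j,R}(t_j)\to 0$: using the change of variables $z=x_j-y_j$ one writes $I_{j,R}(t_j)/t_j^{d_j}=\int_{\R^{d_j}}C_j^R(z)\vol(D_j\cap(D_j+z/t_j))\,dz$, and Fatou's lemma combined with $C_j^R\ge 0$ and $C_j\notin L^R$ forces this quantity to diverge. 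Letting first $t_1,\ldots,t_p\to\infty$ with $\epsilon$ fixed and then $\epsilon\to 0$ closes the argument.

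The main obstacle is that the assumption only grants $L^M$-integrability of $C_j$ for \emph{some} $M\ge R+1$, whereas $|C_j|^q$ may fail to be integrable for intermediate exponents $R<q<M$; the cutoff at $\epsilon$ is precisely what overcomes this, trading the small-$|C_j|$ part against the divergence of $I_{j,R}(t_j)/t_j^{d_j}$ and the large-$|C_j|$ part against a Chebyshev-$L^M$ bound.
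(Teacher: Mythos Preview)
Your argument is correct and takes a genuinely different route from the paper. Three points of divergence are worth noting. First, you compute the $L^2$ distance exactly as $2(1-|a_R|\sigma_R/\sigma)$, reducing immediately to the ratio $\sum_{q>R}a_q^2\sigma_q^2/(a_R^2\sigma_R^2)$; the paper instead splits $\widetilde Y-{\rm sgn}(a_R)\widetilde Y[R]$ into two pieces and bounds each separately, which is slightly more laborious but equivalent. Second, to control $\int_{(t_jD_j)^2}|C_j|^q\big/\int_{(t_jD_j)^2}C_j^R$ uniformly in $q$, the paper iterates Cauchy--Schwarz $n$ times until the exponent $R+2^n(q-R)$ exceeds $M$, obtaining a bound of the form $(t_j^{d_j}/I_{j,R}(t_j))^{1/2^n}$; your $\epsilon$-threshold plus Chebyshev--$L^M$ bound is more elementary and gives a cleaner estimate, with the same two-step limit ($t\to\infty$ then $\epsilon\to 0$) playing the role of the paper's choice of $n$. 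Third, to show $I_{j,R}(t_j)/t_j^{d_j}\to\infty$, the paper invokes doubling conditions for non-negative positive-definite functions and covariogram estimates from \cite{Gorbachev,Galerne} (yielding two-sided bounds $c_2\int_{\|z\|\le t_j}C_j^R\le t_j^{-d_j}I_{j,R}(t_j)\le c_1\int_{\|z\|\le t_j}C_j^R$), whereas your Fatou argument gives only the lower bound---but that is all that is needed here, and it avoids the external references. Overall your proof is shorter and more self-contained; the paper's version has the minor advantage of producing an explicit rate $(t_j^{d_j}/I_{j,R}(t_j))^{1/2^n}$ rather than an implicit $\epsilon$-dependent one.
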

\begin{proof}
We divide the proof in three steps.

\bigskip

\textit{Step 1: upper and lower bounds for the variance.}
By assumption, there exist $j$ and $M\geq R+1$ such that $C_j\notin L^R(\R^{d_j})$, but $C_j\in \L^M(\R^{d_j})$. Since $C_j^R\ge0$ by assumption, by doubling conditions for non-negative definite functions (see \cite{Gorbachev}) and properties of covariograms (see \cite{Galerne}), reasoning exactly as in the first step of the proof of \cite[Proposition 9]{MN22}, there exist two positive constants $c_1>c_2>0$ such that
\[
c_2\int_{\{\|x_j\|\le t_j\}}C_j^R(z_j)dz_j\le t_j^{-d_j}\var(Y_j(t_j)[R])\le c_1\int_{\{\|x_j\|\le t_j\}}C_j^R(z_j)dz_j.
\]
In particular, this implies
\begin{equation}\label{equ:vartoinfty}
    t_j^{-d_j}\var(Y_j(t_j)[R])\rightarrow\infty \quad \quad \text{as }t_j\rightarrow\infty.
\end{equation}

\bigskip

\textit{Step 2: we prove that ${\rm Var}(Y(t_1,\dots,t_p))\sim a^2_R {\rm Var}(Y(t_1,\dots,t_p)[R])$ as $t_1,\ldots,t_p\to\infty$.}
Since $|C|\le1$ and by assumption $C_i^R\ge0$ for $i\in\{1,\dots,p\}$, we have that, for any $q>R$ and 
any $j\in \{1,\ldots,p\}$,
\begin{align} \label{equ:varsplitting}
    \frac{{\rm Var}(Y(t_1,\dots,t_p)[q])}{{\rm Var}(Y(t_1,\dots,t_p)[R])}&=\frac{q!}{R!}\prod_{i=1}^p\frac{\int_{(t_iD_i)^2}C_i^q(x_i-y_i)dx_idy_i}{\int_{(t_iD_i)^2}C_i^R(x_i-y_i)dx_idy_i}\\ 
    &\le \frac{q!}{R!} \frac{\int_{(t_jD_j)^2}C_j^q(x_j-y_j)dx_jdy_j}{\int_{(t_jD_j)^2}C_j^R(x_j-y_j)dx_jdy_j}=\frac{{\rm Var}(Y(t_j)[q])}{{\rm Var}(Y(t_j)[R])}.
\end{align}
Now, by applying Cauchy-Schwarz $n$ times, we obtain 
\begin{align*}
    \frac{\int_{(t_jD_j)^2}C_j^q(x_j-y_j)dx_jdy_j} {\int_{(t_jD_j)^2}C_j^R(x_j-y_j)dx_jdy_j} & \le\left(\frac{\int_{(t_jD_j)^2}|C_j(x_j-y_j)|^{2q-R}dx_jdy_j}{\int_{(t_jD_j)^2}C_j^R(x_j-y_j)dx_jdy_j}\right)^{1/2}\\
    &\le\left(\frac{\int_{(t_jD_j)^2}|C_j(x_j-y_j)|^{4q-3R}dx_jdy_j}{\int_{(t_jD_j)^2}C_j^R(x_j-y_j)dx_jdy_j}\right)^{1/4}\\
    &\le \dots \\
    &\le \left(\frac{\int_{(t_jD_j)^2}|C_j(x_j-y_j)|^{R+2^n(q-R)}dx_jdy_j}{\int_{(t_jD_j)^2}C_j^R(x_j-y_j)dx_jdy_j}\right)^{1/2^n}\\
    &\le \left(\vol(D_j)t_j^{d_j}\frac{\int_{\{\|x_j\|\le {\rm diam}(D_j)t_j\}}|C_j(z_j)|^{R+2^n}dz_j}{\int_{(t_jD_j)^2}C_j^R(x_j-y_j)dx_jdy_j}\right)^{1/2^n}\\
    &=  \left(\vol(D_j)t_j^{d_j}R!\frac{\int_{\{\|x_j\|\le {\rm diam}(D_j)t_j\}}|C_j(z_j)|^{R+2^n}dz_j}{\var(Y_j(t_j)[R])}\right)^{1/2^n}
\end{align*}
where the last inequality follows by a change of variable $x_j-y_j=z_j$, and the fact that $q-R\ge1$.
Since $C_j\in L^{R+2^n}(\R^{d_j})$ for $n$ sufficiently large, we deduce from \eqref{equ:vartoinfty} for every $q>R$
\[
\frac{{\rm Var}(Y(t_1,\dots,t_p)[q])}{{\rm Var}(Y(t_1,\dots,t_p)[R])}\le cq! \left(t_j^{d_j}/\var(Y_j(t_j)[R])\right)^{1/2^n},
\]
where $c>0$ depends on $R$ and $n$ but \underline{not} on $q$.
Combining this with 
\eqref{equ:vardecomp}, we get
\begin{equation}\label{kivabien}
 \left|\frac{{\rm Var}(Y(t_1,\dots,t_p))}{{\rm Var}(Y(t_1,\dots,t_p)[R])}-a^2_R\right|\leq c\,\left(\sum_{q=R+1}^\infty a_q^2q!\right)\left(t_j^{d_j}/\var(Y_j(t_j)[R])\right)^{1/2^n}.
\end{equation}
Using \eqref{equ:vartoinfty}, this implies ${\rm Var}(Y(t_1,\dots,t_p))\sim a^2_R {\rm Var}(Y(t_1,\dots,t_p)[R])$ as $t_1,\ldots,t_p\to\infty$.

\bigskip

\textit{Step 3: $\E\left[\left({\rm sgn}(a_R)\widetilde{Y}(t_1,\ldots,t_p)[R]-\widetilde{Y}(t_1,\ldots,t_p)\right)^2\right]\rightarrow 0$.}
To prove this last step, considering the decomposition
\begin{align}
	&\widetilde{Y}(t_1,\ldots,t_p)-{\rm sgn}(a_R)\widetilde{Y}(t_1,\ldots,t_p)[R]\\
	&=\frac{{\rm sgn}(a_R)({Y}(t_1,\ldots,t_p)-\E[{Y}(t_1,\ldots,t_p)]-a_RY(t_1,\dots,t_p)[R])}{a_R\sqrt{{\rm Var}(Y(t_1,\dots,t_p)[R])}} \\
 &\hspace{1.3cm} + \frac{Y(t_1,\dots,t_p)-\E[{Y}(t_1,\ldots,t_p)]}{\sqrt{{\rm Var}(Y(t_1,\dots,t_p))}}\left\{1-
	\frac{1}{|a_R|}\sqrt{\frac{{\rm Var}(Y(t_1,\dots,t_p))}{{\rm Var}(Y(t_1,\dots,t_p)[R])}}
	\right\},
\end{align}
we get that
\begin{align}\label{eq:L2redP}
	& \mathbb{E} \left[\left(\widetilde{Y}(t_1,\ldots,t_p) - {\rm sgn}(a_R)\widetilde{Y}(t_1,\ldots,t_p)[R]\right)^2\right]\\
	&\leq 2\frac{\mathbb{E}[({Y}(t_1,\ldots,t_p)-\E[{Y}(t_1,\ldots,t_p)]-a_RY(t_1,\dots,t_p)[R]))^2]}{a_R^2{\rm Var}(Y(t_1,\dots,t_p)[R]) }\\
 &\hspace{1.3cm} + 2\,\left(1-
	\frac{1}{|a_R|}\,\sqrt{\frac{{\rm Var}(Y(t_1,\dots,t_p))}{{\rm Var}(Y(t_1,\dots,t_p)[R])}}
	\right)^2\label{rrhs}.
\end{align}
By the previous step, the second addend converges to $0$.
Regarding the first addend, since by \eqref{equ:vardecomp} we have
$$\mathbb{E}\bigg[({Y}(t_1,\ldots,t_p)-\E[{Y}(t_1,\ldots,t_p)]-a_RY(t_1,\dots,t_p)[R])^2\bigg]=\sum_{q=R+1}^\infty a^2_q\,{{\rm Var}(Y({t_1,\ldots,t_p})[q])},$$ we deduce from the previous step that
\begin{align}
	\frac{\mathbb{E}[({Y}(t_1,\ldots,t_p)-\E[{Y}(t_1,\ldots,t_p)]-a_RY(t_1,\dots,t_p)[R]))^2]}{a_R^2{\rm Var}(Y(t_1,\dots,t_p)[R]) }\rightarrow 0.
\end{align}
This concludes the proof of Step 3 and Proposition \ref{prop:redP}.

\end{proof}

We are now in a position to prove Theorem~\ref{thm:main}.

\begin{proof}[Proof of Theorem~\ref{thm:main}]
Let us recall that $B=(B_x)_{x\in\R^d}$ is a stationary and continuous Gaussian random field, and that its covariance function is \begin{equation}
    C(x) = C_1(x_1) \ldots \cdot C_p(x_p), \quad x_i\in\R^{d_i}, \,i=1,\ldots,p,
\end{equation}
where $C_i:\R^{d_i}\to \R$ is a non-negative definite function such that $C_i(0)=1$ for every $i=1,\ldots,p$. 

The functionals $\widetilde Y(t_1,\ldots,t_p)$ and $\widetilde  Y_i(t_i)$ are well-defined when $C^R$ is positive. Indeed, by the decomposition \eqref{equ:vardecomp} and the formula \eqref{equ:varHqProof}, we get that their variances are strictly positive.

Let us now distinguish two cases:

$(i)$ The first one is when $C_i\in \L^R(\R^{d_i})$ for every $i$. In this case, each functional $Y_i(t_i)$ satisfies Theorem~\ref{thm:BM}, hence $\widetilde Y_i(t_i)$ has a Gaussian limit. Moreover, Theorem~\ref{thm:BMpdom} applies, too, and we can conclude the same for $\widetilde Y(t_1,\ldots,t_p)$.

$(ii)$ Secondly, let us suppose that for at least one $i$ one has $C_i \in \L^M(\R^{d_i})\setminus \L^R(\R^{d_i})$ with $M>R$. Then, by Proposition~\ref{prop:redP}, it is equivalent to study the limit in distribution of $\widetilde Y(t_1,\ldots,t_p)[R]$ only. Being in a fixed chaos and because $C_j^R\ge 0$, both Proposition~\ref{prop:suffHq} and Proposition~\ref{prop:necHq} apply. Hence, if there exist $j$ (possibly equal or distinct from $i$) such that $\widetilde Y_j(t_j)[R]$ converges in distribution to a standard Gaussian, we can deduce the same for the functional $\widetilde Y(t_1,\ldots,t_p)[R]$, and vice-versa. 
To conclude, it remains to show that $\widetilde Y_j(t_j)$ converges in distribution to a standard Gaussian random variable if and only if $\widetilde Y_j(t_j)[R]$ does. When $C_j \in \L^R(\R^{d_j})$, it is a consequence of the usual Breuer-Major theorem (Theorem~\ref{thm:BM}), since in this case both $\widetilde Y_j(t_j)$ and $\widetilde Y_j(t_j)[R]$ are converging to $N(0,1)$. When, on the contrary, we have $C_j\in L^M(\R^{d_j})\setminus L^R(\R^{d_j})$, it is a consequence of the reduction theorem proved in \cite[Proposition 4]{MN22}. 
\end{proof}

\subsection{Proof of Theorem~\ref{thm:main 2}}\label{subsec:noncentralproof}
We state two results on Wiener-It\^o integrals, which are needed for our proof of Theorem~\ref{thm:main 2}. We note that they should not be considered as new, but rather as a generalization of well-known results contained in, e.g., \cite{M81}. However, for completeness we provide their detailed proofs in the Appendix.

\begin{lemma}[Change of variable formula]\label{change}
    Let $\nu, \nu'$ be real, $\sigma$-finite measures on $\R^d$ endowed with the Borel $\sigma$-algebra $\B(\R^d)$ and let us define the real separable\footnote{Note that if $\nu$ is real, $\sigma$-finite and symmetric, then $\mathcal{H}_\nu$ is a real separable Hilbert space, and we can consider an isonormal Gaussian process on it, as well as every notion and result introduced in Section \ref{subsec:prelmalliavin} for $\L^2(G)$. } Hilbert spaces $\mathcal{H}_{\nu}:=\L^2(\nu), \mathcal{H_{\nu'}}:=\L^2(\nu')$ as in \eqref{equ:defL2G}. Let us suppose that 
    \begin{equation}
        \label{abscont}
        \nu(dx)=|a(x)|^2 \nu'(dx)
    \end{equation}
    where $a$ is a complex valued, even function (i.e. $a(-x)=\overline{a(x)}$). Then, for every $h\in \mathcal{H_{\nu}}^{\odot q}$ we have 
    \[
    I_{\nu,q}(h)\overset{\rm law}{=}I_{\nu',q}( a^{\otimes q}h),
    \]
    where $a^{\otimes q}$ is as in \eqref{equ:tensorproduct}.
\end{lemma}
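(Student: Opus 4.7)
The plan is to interpret multiplication by $a$ as an isometric embedding of Hilbert spaces, and then to invoke the fact that the distribution of a multiple Wiener-It\^o integral depends only on the image of its kernel under the underlying Hilbert space structure. First I would check that the map $T:\mathcal{H}_\nu\to\mathcal{H}_{\nu'}$ defined by $Th = a h$ is a well-defined linear isometry. The isometry property is immediate from $\nu(dx)=|a(x)|^2\nu'(dx)$, since
$$\|Th\|_{\mathcal{H}_{\nu'}}^2=\int_{\R^d}|a(x)h(x)|^2\nu'(dx)=\int_{\R^d}|h(x)|^2\nu(dx)=\|h\|_{\mathcal{H}_\nu}^2,$$
and well-definedness in $\mathcal{H}_{\nu'}$ follows from $a(-x)=\overline{a(x)}$ together with $h(-x)=\overline{h(x)}$, which give $\overline{(ah)(x)}=(ah)(-x)$. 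Extending $T$ to tensor products, on simple tensors one has $T^{\otimes q}(h_1\otimes\cdots\otimes h_q)=a^{\otimes q}\cdot(h_1\otimes\cdots\otimes h_q)$, so by continuity $T^{\otimes q}h=a^{\otimes q}h$ (pointwise product) for every $h\in\mathcal{H}_\nu^{\otimes q}$, and permutation-symmetry together with the hermiticity condition are preserved, whence $a^{\otimes q}h\in\mathcal{H}_{\nu'}^{\odot q}$ whenever $h\in\mathcal{H}_\nu^{\odot q}$.

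Next, let $X_{\nu}$ and $X_{\nu'}$ be the isonormal Gaussian processes used to construct $I_{\nu,q}$ and $I_{\nu',q}$. Since $T$ is an isometry, the centred Gaussian family $\bigl(X_{\nu'}(Tg)\bigr)_{g\in\mathcal{H}_\nu}$ has covariance kernel $\langle g_1,g_2\rangle_{\mathcal{H}_\nu}$, so it is itself isonormal on $\mathcal{H}_\nu$ and equidistributed with $\bigl(X_{\nu}(g)\bigr)_{g\in\mathcal{H}_\nu}$. Consequently, for any $e\in\mathcal{H}_\nu$ with $\|e\|_{\mathcal{H}_\nu}=1$ (so that $\|ae\|_{\mathcal{H}_{\nu'}}=1$),
$$I_{\nu,q}(e^{\otimes q})=H_q(X_\nu(e))\overset{\rm law}{=}H_q(X_{\nu'}(ae))=I_{\nu',q}\bigl((ae)^{\otimes q}\bigr)=I_{\nu',q}\bigl(a^{\otimes q}e^{\otimes q}\bigr),$$
and the same equality in law holds jointly for any finite family of such tensors, because the two Gaussian processes are equidistributed. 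By linearity, the claimed identity $I_{\nu,q}(h)\overset{\rm law}{=}I_{\nu',q}(a^{\otimes q}h)$ therefore holds for every $h$ in the linear span of tensor powers $e^{\otimes q}$, a span which is dense in $\mathcal{H}_\nu^{\odot q}$ by the usual polarization identities.

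Finally, since both $I_{\nu,q}$ and $I_{\nu',q}$ are $\sqrt{q!}$-isometries from the respective symmetric Hilbert spaces into $L^2(\Omega)$, and since $h\mapsto a^{\otimes q}h$ is itself an isometry between these spaces, approximating an arbitrary $h\in\mathcal{H}_\nu^{\odot q}$ by a sequence $h_n$ in the dense subspace spanned by symmetric tensor powers yields $I_{\nu,q}(h_n)\to I_{\nu,q}(h)$ and $I_{\nu',q}(a^{\otimes q}h_n)\to I_{\nu',q}(a^{\otimes q}h)$ in $L^2(\Omega)$, and equality in law is preserved under $L^2$-limits. The only step needing some care is the very first one: one must verify that $T$ is genuinely well-defined as a map into $\mathcal{H}_{\nu'}$ regardless of whether $a$ may vanish or be unbounded, but this is precisely what the identity $\nu=|a|^2\nu'$ is designed to guarantee.
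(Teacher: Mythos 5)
Your proof is correct and rests on the same core idea as the paper's: multiplication by $a$ is an isometry of $\mathcal{H}_\nu$ into $\mathcal{H}_{\nu'}$ (the paper phrases this by sending an orthonormal basis $\{e_i\}$ to the orthonormal system $\{ae_i\}$), and the law of a multiple Wiener--It\^o integral transfers through such an isometry. The only difference is packaging: the paper expands $h$ over an explicit basis and invokes the product formula to identify both sides with the same polynomial in i.i.d.\ Gaussians, whereas you work with diagonal tensors $e^{\otimes q}$ via $I_q(e^{\otimes q})=H_q(X(e))$ and close the argument by polarization, density and an $L^2$ limit --- an equally valid (and arguably slightly cleaner) execution of the same argument.
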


 \begin{lemma}\label{change2}
     Let us denote by $I_q$ the Wiener-It\^o integral acting on $\mathcal{H}^{\odot q}$, with respect to the Lebesgue measure $dx$ on $\R^d$. Then, for every $h\in \mathcal{H}^{\odot q}=L^2_{s}((\R^{d})^q,dx)$ and $s_1,\dots,s_d>0$, we have
     \[
    I_{q}(h)\overset{\rm law}{=} s_1^{-q/2}\ldots s_d^{-q/2} I_{q}\left(\tilde h(s_1,\ldots,s_d)\right),
    \]
    where $\tilde h(s_1,\ldots,s_d)\in \mathcal H^{\odot q}$ is such that \begin{equation}
        \tilde h(s_1,\ldots,s_d) (x_1,\ldots,x_q) := h(x_{11}/s_1,\ldots, x_{1d}/s_d, \ldots, x_{q1}/s_1,\ldots, x_{qd}/s_d ).
    \end{equation}
 \end{lemma}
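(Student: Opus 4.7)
The plan is to establish the scaling identity first on a dense class of step kernels where $I_q$ reduces to an explicit product of independent Gaussians, and then extend to arbitrary $h \in \mathcal H^{\odot q}$ by a density argument based on the It\^o isometry.

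\emph{Step 1.} Realize $I_q = I_{dx,q}$ through a Gaussian white noise $W$ on $(\R^d, dx)$. For pairwise disjoint bounded Borel sets $A_1,\dots,A_q \subset \R^d$ and $h = \mathbf{1}_{A_1 \times \dots \times A_q}$ (suitably symmetrised), the classical formula gives $I_q(h) = W(A_1) \cdots W(A_q)$. Setting $S(y) := (s_1 y_1,\dots,s_d y_d)$, inspection of the definition shows that $\tilde h$ is the symmetrisation of $\mathbf{1}_{S(A_1) \times \dots \times S(A_q)}$, whence $I_q(\tilde h) = W(S(A_1)) \cdots W(S(A_q))$. Since $|S(A_i)| = (s_1 \cdots s_d)|A_i|$ and the $S(A_i)$ remain pairwise disjoint, the centred Gaussian vector $(W(S(A_i)))_{i=1,\dots,q}$ has covariance matrix equal to $(s_1 \cdots s_d)$ times that of $(W(A_i))_{i=1,\dots,q}$, which gives
\begin{equation*}
\bigl(W(S(A_1)),\dots,W(S(A_q))\bigr) \overset{d}{=} (s_1 \cdots s_d)^{1/2}\bigl(W(A_1),\dots,W(A_q)\bigr).
\end{equation*}
Multiplying the components yields the desired identity $s_1^{-q/2} \cdots s_d^{-q/2}\, I_q(\tilde h) \overset{d}{=} I_q(h)$ on this class.

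\emph{Step 2.} Symmetric step kernels of the above form are dense in $\mathcal H^{\odot q} = L^2_s((\R^d)^q, dx^{\otimes q})$, and a single change of variables shows that the map $h \mapsto \tilde h$ is a scaled linear isometry of $L^2_s$ with $\|\tilde h\|_q^2 = (s_1 \cdots s_d)^q \|h\|_q^2$. Combined with the It\^o isometry $\E[I_q(\cdot)^2] = q!\,\|\cdot\|_q^2$, any approximating sequence $h_n \to h$ of step kernels yields simultaneously $I_q(h_n) \to I_q(h)$ and $I_q(\tilde h_n) \to I_q(\tilde h)$ in $L^2(\Omega)$, hence in distribution. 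Passing the distributional identity of Step 1 to the limit then concludes the proof.

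There is no serious technical obstacle: the whole argument is a change of variables combined with the standard isometry/density machinery of Wiener--It\^o calculus. The only genuinely substantive point is the interplay between the scaling on $h$ and the white-noise scaling $W(S(\cdot)) \overset{d}{=} (s_1 \cdots s_d)^{1/2} W(\cdot)$, which is precisely what generates the normalising prefactor $s_1^{-q/2} \cdots s_d^{-q/2}$ in the statement.
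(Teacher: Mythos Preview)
Your argument is correct. Both your proof and the paper's rest on the same underlying fact---that the anisotropic dilation $S$ induces, up to the Jacobian factor $(s_1\cdots s_d)^{1/2}$, an isometry of $L^2(\R^d,dx)$ which preserves the Gaussian structure---but the packaging differs. The paper argues exactly as in its proof of Lemma~\ref{change}: it fixes an orthonormal basis $\{e_i\}$ of $\mathcal H$, observes that $\{\underline s^{-1/2}e_i(\cdot/s_1,\dots,\cdot/s_d)\}$ is again an orthonormal basis, deduces that the two isonormal families $(X(e_i))_i$ and $(X(\underline s^{-1/2}e_i(\cdot/s_1,\dots,\cdot/s_d)))_i$ have the same joint law, and concludes via the product formula. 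Your route via white noise on step kernels and an $L^2$ limit is more hands-on and makes the Jacobian scaling of $W$ explicit, while the paper's basis argument is shorter and reuses the template already set up for Lemma~\ref{change}; neither approach offers a genuine advantage over the other here.
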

 
\begin{proof}[Proof of Theorem~\ref{thm:main 2}] We divide the proof in four steps. In the sequel, $c$ will denote a positive constant which may vary depending on the instance.

 \bigskip
 
 {\it Step 1: Reduction to the $R$th chaos.}
 Since  $C^R\geq 0$ and $C_i\in \bigcup_{M=R+1}^{\infty}\L^M(\R^{d_i})\setminus \L^R(\R^{d_i})$ for every $i$, by Proposition \ref{prop:redP} we are left to study the limit in distribution of $\widetilde Y(t_1,\dots,t_p)[R]$.
 
 \bigskip
 
 {\it Step 2: Variance analysis.}
     It is a standard fact (see e.g. \cite{LO13}, \cite{LO14}, \cite{L99} or \cite{M23}) that if \eqref{equ:regvar} holds for $i$, then 
     \[
     \var(Y_i(t_i)[R])\sim c\, L_i(t_i)^R t_i^{2d_i-R\beta_i}\quad \text{as }t_i\rightarrow\infty.
     \]
     We conclude that
     \[
     \var(Y(t_1,\dots,t_p)[R])=\frac{1}{(R!)^{p-1}}\prod_{i=1}^p\var(Y_i(t_i)[R])\sim c\, \prod_{i=1}^p L_i(t_i)^R t_i^{2d_i-R\beta_i}.
     \]
    
 \bigskip
 
 {\it Step 3: A suitable expression (in law) for $Y(t_1,\dots,t_p)[R]$.} Recall the definition \eqref{equ:fNonCen}. By assumption \eqref{equ:regspec}, we have $G_i(d\lambda_i)=g_i(\lambda_i)d\lambda_i$ for every $i$. Let us define $g:\R^d\rightarrow\R_+$ as
     \[
     g(x_1,\dots,x_p)=\prod_{i=1}^p g_i(x_i).
     \] 
     We get from Lemma \ref{change} that 
     \[
     Y(t_1,\dots,t_p)[R]=I_{G,R}(f_{R,t_1D_1\times\dots\times t_pD_p})\overset{\rm law}{=}I_{R}((\sqrt{g})^{\otimes R}f_{R,t_1D_1\times \dots\times t_pD_p}),
     \]
     where $I_R$ is the $R$th Wiener-It\^o integral with respect to the Lebesgue measure.
     By applying Lemma \ref{change2}, we also obtain 
     \begin{align}
         I_{R}((\sqrt{g})^{\otimes R}f_{R,t_1D_1\times \dots\times t_pD_p}) =\left(\prod_{i=1}^p t_i^{d_i-\frac{Rd_i}{2}}\right)I_{R}\left(\left(\sqrt{g(t_1,\dots,t_p)}\right)^{\otimes R}f_{R,D_1\times \dots\times D_p}\right)
     \end{align}
     where $g(t_1,\dots,t_p):\R^d\to \R_+$ is given by
     \[
     g(t_1,\dots,t_p)(x_1,\dots,x_p)=\prod_{i=1}^p g_i(x_i/t_i).
     \]
    Then, we have obtained the following expression in distribution for $\widetilde Y(t_1,\ldots,t_p)[R]$: \begin{equation}
        \widetilde Y(t_1,\dots,t_p)[R]\overset{\rm law}{=}\frac{I_{R}\left(\left(\sqrt{g(t_1,\dots,t_p)}\right)^{\otimes R}f_{R,D_1\times \dots\times D_p}\right)}{\sqrt{\var\left(I_{R}\left(\left(\sqrt{g(t_1,\dots,t_p)}\right)^{\otimes R}f_{R,D_1\times \dots\times D_p}\right)\right)}}.
    \end{equation}

     \bigskip
 
 {\it Step 4: Proving the $L^2$ convergence.} 
 The last step of the proof  consists in showing the following $L^2$ convergence: 
     \begin{multline}
\frac{I_{R}\left(\left(\sqrt{g(t_1,\dots,t_p)}\right)^{\otimes R}f_{R,D_1\times \dots\times D_p}\right)}{\sqrt{\var\left(I_{R}\left(\left(\sqrt{g(t_1,\dots,t_p)}\right)^{\otimes R}f_{R,D_1\times \dots\times D_p}\right)\right)}} \\
 \overset{L^2(\Omega)}{\longrightarrow} \frac{I_{R}((\sqrt{g'})^{\otimes R}f_{R,D_1\times \dots\times D_p})}{\sqrt{\var\left(I_{R}((\sqrt{g'})^{\otimes R}f_{R,D_1\times \dots\times D_p})\right)}} 
 \overset{\rm law}{=} \frac{I_{\nu_1\times\dots\times \nu_p,R}\left(f_{R,D_1\times \dots\times D_p}\right)}{\sqrt{\var\left(I_{\nu_1\times\dots\times \nu_p,R}\left(f_{R,D_1\times \dots\times D_p}\right)\right)}} \label{L2conv},
\end{multline}
where the equality in distribution follows from Lemma \ref{change} and where \begin{equation}
g'(x):=\prod_{i=1}^p\|x_i\|^{\beta_i-d_i}.
\end{equation} 
By the previous steps, it is enough to prove that for some positive constant $c$ we have
 \begin{multline}
       c\prod_{i=1}^p t_i^{\frac{R(\beta_i-d_i)}{2}}L_i(t_i)^{-R/2} I_{R}\left(\left(\sqrt{g(t_1,\dots,t_p)}\right)^{\otimes R}f_{R,D_1\times \dots\times D_p}\right) \\ \overset{L^2(\Omega)}{\rightarrow} I_{R}\left((\sqrt{g'})^{\otimes R}f_{R,D_1\times \dots\times D_p}\right).
 \end{multline}
It is equivalent to show that
 \begin{align}
     \label{final}
       &\left(\prod_{i=1}^p Q_{t_i}(\lambda_{1i},\dots,\lambda_{qi})\right)f_{R,D_1\times \dots\times D_p}(\lambda_1,\dots,\lambda_R)\prod_{j=1}^R\prod_{i=1}^p\|\lambda_{ji}\|^{\frac{\beta_i-d_i}{2}}\\
       &\overset{L^2((\R^d)^R)}{\longrightarrow} {f_{R,D_1\times\dots\times D_p}(\lambda_1,\dots,\lambda_R)}\prod_{j=1}^R\prod_{i=1}^p\|\lambda_{ji}\|^{\frac{\beta_i-d_i}{2}},
 \end{align}
where the $Q_{t_i}: (\R^{d_i})^R \to \mathbb C $ are defined as
 \begin{eqnarray*}
      Q_{t_i}(\lambda_{1i},\dots,\lambda_{Ri}):&=& \, \sqrt{\prod_{j=1}^R\|\lambda_{ji}/t_i\|^{d_i-\beta_i}c_i^{-1}L_i^{-1}(t_i)g_i(\lambda_{ji}/t_i)},
 \end{eqnarray*}
 and $c_i$ are some positive constant.
 In particular, we will prove \eqref{final} with the constant $c_i$ given by assumption \eqref{equ:regspec}.
 To prove \eqref{final}, we proceed by induction on $p$. If $p=1$, then  \eqref{final} reduces to
 \[
\int_{(\R^{d_1})^R}\left|Q_{t_1}(\lambda_{11},\dots,\lambda_{R1})-1\right|^2\left|f_{R,D_1}(\lambda_{11},\dots,\lambda_{R1})\right|^2 \frac{d\lambda_{11}\dots d\lambda_{R1}}{\prod_{j=1}^R\|\lambda_{j1}\|^{d_1-\beta_1}}\rightarrow 0,
 \]
and this convergence is shown in the proof of \cite[Theorem 5]{LO14}. Now, let us assume that \eqref{final} holds for $p-1$, and let us prove the result for $p$. For this, note that $xy-1=(x-1)(y-1)+(x-1)+(y-1)$, implying
\begin{multline}
    \left(\prod_{i=1}^p Q_{t_i}(\lambda_{1i},\dots,\lambda_{Ri})\right)-1
    = \left(\left(\prod_{i=1}^{p-1} Q_{t_i}(\lambda_{1i},\dots,\lambda_{Ri})\right)-1\right)\Bigg(Q_{t_p}(\lambda_{1p},\dots,\lambda_{Rp})-1\Bigg)\\
 +\left(\left(\prod_{i=1}^{p-1} Q_{t_i}(\lambda_{1i},\dots,\lambda_{Ri})\right)-1\right)+\Bigg(Q_{t_p}(\lambda_{1p},\dots,\lambda_{Rp})-1\Bigg). 
\end{multline}
 Then, by applying the triangular inequality in $L^2((\R^d)^R)$, we get that \eqref{final} holds by inductive hypothesis.
 \end{proof}

\section{Examples}\label{sec:ex}

In this Section we collect some examples.

\begin{example}[Hermite variations of a fractional Brownian sheet (fBs)]\label{exa:fBs}
    Let us consider a fBs \begin{equation}
        W^{\alpha,\beta}=(W^{\alpha,\beta})_{(x_1,x_2)\in \R_+^2 }
    \end{equation}
    with parameter $(\alpha,\beta)\in \R_+^2$ and its rectangular increments: \begin{equation}\label{equ:exaFBs1}
        R_{x_1,x_2} := W^{\alpha,\beta}_{x_1+1,x_2+1}-W^{\alpha,\beta}_{x_1,x_2+1}-W^{\alpha,\beta}_{x_1+1,x_2}+W^{\alpha,\beta}_{x_1,x_2}.
    \end{equation}
    The field $R$ defined as above is a stationary centered Gaussian field, with covariance function given by \begin{equation}
        \cov(R_{x_1,x_2},R_{y_1,y_2}) =  r_\alpha(x_1-y_1) r_\beta(x_2-y_2)
    \end{equation}
    where \begin{equation}\label{equ:exaFBs2}
        r_H(u)=\frac{1}{2}\left(|u+1|^{2H}+|u-1|^{2H}-2|u|^{2H}\right), \quad \quad u\in\R, \ H\in(0,1).
    \end{equation}
    Notice that the covariance function of $R$ is separable in the sense of \eqref{equ:sep}. Moreover, $r_{\alpha}$ (resp. $r_{\beta}$) is the covariance function of the process defined as the (one-dimensional) increments of a fractional Brownian motion (fBm) with Hurst index $\alpha$ (resp. $\beta$). We briefly recall some results about this process, see e.g. \cite{bluebook}. A fractional Brownian motion $W^H=(W^H_t)_{t\in \R}$ of Hurst index $H$, is a centered Gaussian process such that \begin{equation}
            \E[W^H_{x_1}W^H_{y_1}] = \frac{1}{2}\left(|x_1|^{2H}+|y_1|^{2H}-|x_1-y_1|^{2H}\right), \quad x_1,y_1\in\R.
        \end{equation}
    Its increment process 
    \begin{equation}
            X_u=W^H_{u+1}-W^H_u, \quad u\in \R_+
        \end{equation}
    is known as fractional Gaussian noise. It is a centered, stationary Gaussian process with covariance function as in \eqref{equ:exaFBs2}. Note that \eqref{equ:exaFBs2} is regularly varying with parameter $2H-2$ (see \cite[(7.4.3)]{bluebook}, or \cite[Example 1]{M23}), i.e. it behaves asymptotically as \begin{equation}
            r_H(u)=H(2H-1)|u|^{2H-2} + o(|u|^{2H-2}), \quad \text{as } |u|\to\infty. 
        \end{equation}
    Breuer-Major theorem (see Theorem~\ref{thm:BM}) and \cite[Theorem 7.4]{bluebook} allow us to deduce that the sequence \begin{equation}
        Y_N = \sum_{k=0}^{N-1} H_q(X_k), \quad N\ge 1,
    \end{equation}
    under a proper renormalization, converges to a Gaussian distribution if $H\le 1-1/2q$. Moreover, in \cite{NP09} and \cite{BN08}, the authors quantify the convergence in total variation distance. 
    \begin{theorem}[Theorem 1.1, 1.2 in \cite{BN08}]
        If $H\in (0,1-1/2q]$ and $V_N = \frac{Y_N}{\sqrt{\var(Y_N)}}$, then
        \begin{equation} \label{equ:exaFBsRates} 
            d_{TV}(V_N, N(0,1))\le c_{H,q} \begin{cases}
                N^{-1/2} &\text{ if } H \in (0,\frac{1}{2})\\
                N^{H-1} &\text{ if } H \in \left[\frac{1}{2},\frac{2q-3}{2q-2}\right] \\
                N^{{Hq-q+1/2}} &\text{ if } H \in \left(\frac{2q-3}{2q-2},1-\frac{1}{2q}\right) \\
                (\log N) ^{-1/2} &\text{ if } H=1-\frac{1}{2q}
        \end{cases} =: c_{H,q} \cdot  g (q,H,N).
        \end{equation}
        In the previous, $c_{H,q}$ a positive constant, that may vary, only dependent on $q$ and $H$. 
    \end{theorem}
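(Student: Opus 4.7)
The plan is to apply the quantitative Fourth Moment Theorem (Theorem~\ref{thm:4thQuant}) to the sequence $V_N$, which lives in the $q$th Wiener chaos. First I would realize the fractional Gaussian noise in isonormal form: there is a real Hilbert space $\mathfrak{H}$ and unit vectors $\{e_k\}_{k\in\Z}\subset\mathfrak{H}$ with $\langle e_k,e_l\rangle_{\mathfrak H}=r_H(k-l)$, so that $X_k=X(e_k)$ for an isonormal Gaussian process $X$ on $\mathfrak{H}$. Then $H_q(X_k)=I_q(e_k^{\otimes q})$ and
\begin{equation}
Y_N=I_q(h_N),\qquad h_N:=\sum_{k=0}^{N-1}e_k^{\otimes q}\in\mathfrak{H}^{\odot q}.
\end{equation}

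Since $H<1-1/(2q)$ yields $\sum_{k\in\Z}|r_H(k)|^q<\infty$, the Breuer-Major Theorem~\ref{thm:BM} gives $\var(Y_N)\sim \sigma_{H,q}^2\,N$ with $\sigma_{H,q}^2=q!\sum_{k\in\Z}r_H(k)^q>0$. Setting $\tilde h_N:=h_N/\|h_N\|_q$ one has $V_N=I_q(\tilde h_N)$, and Theorem~\ref{thm:4thQuant} reduces the problem to controlling the contractions $\|\tilde h_N\otimes_r\tilde h_N\|_{2q-2r}$ for $r=1,\dots,q-1$. A direct expansion using $\langle e_i^{\otimes q},e_j^{\otimes q}\rangle_{\mathfrak H^{\otimes r}}=r_H(i-j)^r$ yields
\begin{equation}
\|h_N\otimes_r h_N\|_{2q-2r}^2=\sum_{i,j,k,l=0}^{N-1}r_H(i-k)^r r_H(j-l)^r r_H(i-j)^{q-r} r_H(k-l)^{q-r},
\end{equation}
which, combined with $\|h_N\|_q^4\asymp N^2$, turns the bound into an asymptotic analysis of a quadruple sum of products of covariances.

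The main obstacle is extracting the \emph{sharp} rate in each of the four regimes of $H$. Using $r_H(k)\sim H(2H-1)|k|^{2H-2}$, I would classify the marginal sums $S_\alpha(N):=\sum_{|k|\le N}|r_H(k)|^\alpha$ according to whether $\alpha(2H-2)$ is less than, equal to, or greater than $-1$. When $H<1/2$ every relevant $S_\alpha(N)$ stays bounded and a Young-type inequality gives $\|h_N\otimes_r h_N\|_{2q-2r}^2=O(N^2)$, which after normalization and square-rooting produces the base rate $N^{-1/2}$. For $H\in[1/2,(2q-3)/(2q-2)]$ the $r=q-1$ contraction becomes dominant: rescaling $(i,j,k,l)\mapsto (i,j,k,l)/N$ identifies the polynomial blow-up $N^{2H-1}$ relative to the $H<1/2$ baseline, yielding the rate $N^{H-1}$. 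For $H>(2q-3)/(2q-2)$ a second marginal sum switches from convergent to regularly varying with positive index, contributing an extra factor whose combinatorics give the rate $N^{(2Hq-2q+1)/2}$. Finally, at the critical $H=1-1/(2q)$, the borderline divergence $\sum_{|k|\le N}|k|^{-1}\asymp\log N$ replaces the polynomial blow-up, producing the $(\log N)^{-1/2}$ rate. Combining the worst contraction in each regime with Theorem~\ref{thm:4thQuant} yields the piecewise bound~\eqref{equ:exaFBsRates}. The technical heart of the argument is the careful bookkeeping of the quadruple sums at the transition points between regimes, which is where the exponents in $g(q,H,N)$ are actually determined.
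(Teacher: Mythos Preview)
The paper does not prove this statement: it is quoted verbatim as ``Theorem 1.1, 1.2 in \cite{BN08}'' inside Example~\ref{exa:fBs}, and is used only as input to illustrate Corollary~\ref{co:exaFBsBounds}. There is therefore no ``paper's own proof'' to compare against.

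Your sketch is the standard Malliavin--Stein route and is essentially the argument of \cite{BN08} and \cite{NP09}: represent $V_N$ as a $q$th multiple integral, invoke Theorem~\ref{thm:4thQuant}, and estimate $\|h_N\otimes_r h_N\|_{2q-2r}^2$ via the asymptotics $r_H(k)\sim H(2H-1)|k|^{2H-2}$. The regime splitting you describe is correct in spirit. Two points deserve care. First, at the boundary $H=1-\tfrac{1}{2q}$ the Breuer--Major normalization you quote breaks down: one has $\var(Y_N)\asymp N\log N$ rather than $\sim\sigma^2 N$, so $\|h_N\|_q^4\asymp N^2(\log N)^2$, and the $(\log N)^{-1/2}$ rate comes from this extra logarithm in the denominator, not from the contraction numerator alone. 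Second, the passage ``a second marginal sum switches \dots\ whose combinatorics give the rate $N^{(2Hq-2q+1)/2}$'' is where the actual work lies; in \cite{BN08} this is handled by a precise Young-convolution bound on the quadruple sum, showing that the worst contraction is $r=q-1$ and that $\|h_N\otimes_{q-1}h_N\|_2^2\asymp N^{2qH-2q+3}$ in this regime. Your outline is correct but these two transitions are exactly the places where a full proof has to be written out.
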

    If $H>1-1/2q$, we have convergence towards a non Gaussian distribution (see \cite{DM79}).

In \cite{RST}, the authors show that a proper renormalization of \begin{equation}\label{equ:VNM}
        V_{N,M} = \sum_{x_1=0}^{N-1} \sum_{x_2=0}^{M-1} H_q(R_{x_1,x_2}), \quad N, M \in \mathbb N 
    \end{equation}
    converges to a Gaussian distribution as $N,M\to\infty$ when $\alpha\le 1-1/2q$ or $\beta \le 1-1/2q$\footnote{More precisely, they showed it for the Hermite variations of the field $N^\alpha M^\beta R_{i/N,j/M}^{\alpha,\beta}$. However, by self-similarity of the process $R$ (see \cite[Definition 2.3]{RST}), the two share the same law.}. Moreover, they bound the total variation distance between $V_{N,M}$ and its Gaussian limit. 
    
    \begin{theorem}[Theorem 3.1 in \cite{RST}]
    Let us denote by $c_{\alpha,\beta}$ a generic positive constant which depends on $\alpha$, $\beta$ and $q$, but which is independent of $N$ and $M$. We have \begin{enumerate}[label=(\arabic*)]
                \item If both $0<\alpha,\beta<1-1/2q$, then $\widetilde V_{N,M}$ converges in law to $N(0,1)$ with normalization $\phi(\alpha,\beta,N,M)=\sqrt{\frac{q!}{s_{\alpha}s_{\beta}}} N^{\alpha q - 1/2} M^{\beta q - 1/2}$. In addition \begin{equation}\begin{aligned}
                    d_{TV}(&\widetilde V_{N,M},N(0,1)) \\
                    &\le c_{\alpha,\beta}\sqrt{N^{-1} + N^{2\alpha-2}+N^{2\alpha q -2q +1}+M^{-1} + M^{2\beta-2}+M^{2\beta q -2q +1}}.
                    \end{aligned}
                \end{equation}
                \item If $0<\alpha<1-1/2q$ and $\beta=1-1/2q$, then $\widetilde V_{N,M}$ converges in law to $N(0,1)$ with normalization $\phi(\alpha,\beta,N,M)=\sqrt{\frac{q!}{s_{\alpha}\iota_{\beta}}} N^{\alpha q - 1/2} M^{q-1}(\log M)^{-1/2}$. In addition \begin{equation}
                    d_{TV}(\widetilde V_{N,M},N(0,1))\le c_{\alpha,\beta}\sqrt{N^{-1} + N^{2\alpha-2}+N^{2\alpha q -2q +1}+(\log M)^{-1}}.
                \end{equation}
                If $0<\beta<1-1/2q$ and $\alpha=1-1/2q$, then we get an analogous estimate as the previous one.
                \item If both $\alpha=\beta=1-1/2q$, then $\widetilde V_{N,M}$ converges in law to $N(0,1)$ with normalization $\phi(\alpha,\beta,N,M)=\sqrt{\frac{q!}{\iota_{\alpha}\iota_{\beta}}} N^{q-1}(\log N)^{-1/2} M^{q-1}(\log M)^{-1/2}$. In addition \begin{equation}
                    d_{TV}(\widetilde V_{N,M},N(0,1)) \le c_{\alpha,\beta}\sqrt{(\log N)^{-1}+(\log M)^{-1}}.
                \end{equation} 
                \item If $\alpha<1-1/2q$ and $\beta>1-1/2q$, then $\widetilde V_{N,M}$ converges in law to $N(0,1)$ with normalization $\phi(\alpha,\beta,N,M)=\sqrt{\frac{q!}{s_{\alpha}\kappa_{\beta}}} N^{\alpha q - 1/2} M^{ q - 1}$. In addition\footnote{The exponent in red is the correction of a typo in \cite[Theorem 3.1]{RST}.} \begin{equation}
                    d_{TV}(\widetilde V_{N,M},N(0,1)) 
                    \le c_{\alpha,\beta}\sqrt{N^{-1} + N^{2\alpha-2}+N^{2\alpha q -2q +1}+M^{\color{red}{-(2\beta q -2q +1)}}}.
                    \end{equation}
                \item If $\alpha=1-1/2q$ and $\beta>1-1/2q$, then $\widetilde V_{N,M}$ converges in law to $N(0,1)$ with normalization $\phi(\alpha,\beta,N,M)=\sqrt{\frac{q!}{\iota_{\alpha}\kappa_{\beta}}} N^{ q - 1} (\log N)^{-1/2} M^{ q - 1}$. In addition \begin{equation}
                    d_{TV}(\widetilde V_{N,M},N(0,1)) 
                    \le c_{\alpha,\beta}\sqrt{(\log N)^{-1}+M^{-(2\beta q -2q +1)}}
                    \end{equation} 
            \end{enumerate}            
        \end{theorem}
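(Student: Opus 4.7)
The plan is to apply Proposition~\ref{prop:quantHq} in its discrete form (Remark~\ref{rem:discrete}) to the rectangular increment field $R$ introduced in~\eqref{equ:exaFBs1}, with $p=2$, $d_1 = d_2 = 1$, $D_1 = D_2 = [0,1]$, and $\varphi = H_q$. The covariance of $R$ factors as $\cov(R_{x_1,x_2}, R_{y_1,y_2}) = r_\alpha(x_1-y_1)\,r_\beta(x_2-y_2)$, so the separability hypothesis of Proposition~\ref{prop:quantHq} is satisfied; note that no sign condition on $C^q$ is needed at the quantitative level. Moreover, by stationarity of $R$ in each coordinate (and using $r_\alpha(0)=r_\beta(0)=1$), the two marginal $q$th Hermite variations have the same law as a one-dimensional Hermite variation of fractional Gaussian noise,
\begin{equation}
Y_N^{(1)}[q] \overset{\text{law}}{=} \sum_{k=0}^{N-1} H_q(X_k^{(\alpha)}), \qquad Y_M^{(2)}[q] \overset{\text{law}}{=} \sum_{k=0}^{M-1} H_q(X_k^{(\beta)}).
\end{equation}

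Feeding this into Proposition~\ref{prop:quantHq} yields at once
\begin{equation}
d_{TV}(\widetilde V_{N,M}, N(0,1)) \le c_q\,\sqrt{\E[(\widetilde Y_N^{(1)}[q])^4] - 3}\;\sqrt{\E[(\widetilde Y_M^{(2)}[q])^4] - 3}.
\end{equation}
Each factor on the right hand side is now a purely one-dimensional object. The rates $g(q,\alpha,N)$ and $g(q,\beta,M)$ recalled in~\eqref{equ:exaFBsRates} are obtained in \cite{NP09, BN08} precisely by controlling $\sqrt{\E[F^4]-3}$ for $F$ in the $q$th Wiener chaos via the quantitative Fourth Moment Theorem~\ref{thm:4thQuant}; hence, up to constants depending only on $\alpha,\beta,q$, we have
\begin{equation}
\sqrt{\E[(\widetilde Y_N^{(1)}[q])^4] - 3} \le c\, g(q,\alpha,N), \qquad \sqrt{\E[(\widetilde Y_M^{(2)}[q])^4] - 3} \le c\, g(q,\beta,M).
\end{equation}

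Combining the last two displays produces the product-type estimate
\begin{equation}
d_{TV}(\widetilde V_{N,M}, N(0,1)) \le c_{\alpha,\beta,q}\, g(q,\alpha,N)\, g(q,\beta,M),
\end{equation}
which case-by-case in $(\alpha,\beta)$ sharpens the bounds recalled from \cite{RST}: there the bound is of the form $\sqrt{A+B}$ where $A$ and $B$ are the two one-dimensional contributions, whereas our route gives $\sqrt{AB}$, a genuine product. When both $A$ and $B$ are small this is dramatically tighter. The only non-routine ingredient is confirming that the arguments in \cite{BN08} really do produce fourth-moment estimates of order $g(q,H,N)^2$ rather than merely total-variation estimates of that order; a quick inspection of their proofs, which proceed through Theorem~\ref{thm:4thQuant}, confirms this. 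The step I expect to be most delicate is the case-by-case bookkeeping required to rewrite the product $g(q,\alpha,N)\,g(q,\beta,M)$ in each of the five regimes of~\eqref{equ:exaFBsRates} and to ensure that it is compared against the correct normalization $\phi(\alpha,\beta,N,M)$ used in the statement above.
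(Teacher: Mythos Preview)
The paper does not prove this theorem; it is quoted verbatim from \cite{RST} as background within Example~\ref{exa:fBs}. What the paper does establish, via exactly the route you describe, is the sharper Corollary~\ref{co:exaFBsBounds}, which replaces the additive bounds $\sqrt{A+B}$ of \cite{RST} by the multiplicative bounds $\sqrt{AB}$. Your proposal is therefore really a proof of Corollary~\ref{co:exaFBsBounds}, and it coincides with the paper's own argument: apply Proposition~\ref{prop:quantHq} in its discrete form, identify the two marginal factors as one-dimensional fourth-cumulant quantities for fractional Gaussian noise, and import the estimates of \cite{BN08,NP09}. Since the product bound dominates the additive one (after absorbing finitely many small $N,M$ into the constant), the cited theorem follows as a trivial consequence.

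One point you leave implicit and should make explicit: in regimes (4) and (5), where $\beta>1-\tfrac{1}{2q}$, the rate function $g(q,\beta,M)$ of~\eqref{equ:exaFBsRates} is not defined, and the marginal $\widetilde Y_M^{(2)}[q]$ does \emph{not} converge to a Gaussian. There the factor $\sqrt{\E[(\widetilde Y_M^{(2)}[q])^4]-3}$ is controlled not by a vanishing rate but simply by hypercontractivity: any unit-variance element of the $q$th Wiener chaos has fourth moment bounded by a constant depending only on $q$. This is why Corollary~\ref{co:exaFBsBounds}(2) drops the $M$-dependence altogether, and it is the missing piece in your ``case-by-case bookkeeping''.
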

        
        Our Theorem~\ref{thm:main} translates in the discrete setting, too, and returns the same qualitative phenomenon described above. Moreover, by using the bounds obtained in the proof of \cite[Theorem 4.1]{NP09}, Proposition~\ref{prop:quantHq} allows us to improve the previous rates, {by replacing a sum of powers or logarithmic functions of $N$ and $M$ with their product. This results in a smaller bound and, more interestingly, clarifies that the growth rate of every domain (the sums in \eqref{equ:VNM}) contributes multiplicatively to the speed of convergence rather than additively. }
        % \footnote{{(6) Reviewer: Corollary 4.4: explain in a more precise way how you improve the bounds obtained in [30].}}
        \begin{corollary}\label{co:exaFBsBounds} Recall the definition of $g$ in \eqref{equ:exaFBsRates}. 
        \begin{enumerate}[label=(\arabic*)]
            \item If both $0<\alpha,\beta\leq 1-1/2q$, then \begin{equation}
                d_{TV}(\widetilde V_{N,M},N(0,1))  \le c_{\alpha,\beta,q} \ g(q,\alpha,N) \cdot g(q,\beta,M).
            \end{equation}
            \item If $\alpha\leq 1-1/2q$ and $\beta>1-1/2q$, then \begin{equation}
                d_{TV}(\widetilde V_{N,M},N(0,1))  \le c_{\alpha,\beta,q} \ g(q,\alpha,N),
            \end{equation}
        \end{enumerate}
        where $c_{\alpha,\beta,q}$ are constants depending on $\alpha,\beta,q$ that may differ from the ones in \cite[Theorem 3.1]{RST}, and the normalization terms are as before.
        \end{corollary}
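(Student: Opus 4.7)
The plan is to apply the discrete analog of Proposition~\ref{prop:quantHq} (as noted in Remark~\ref{rem:discrete}) directly to $V_{N,M}$, whose underlying field has the separable covariance $r_\alpha\otimes r_\beta$. This gives immediately
\[
d_{TV}(\widetilde V_{N,M},N(0,1)) \;\leq\; c_q \sqrt{\E\big[\widetilde V_N^4\big]-3}\,\cdot\,\sqrt{\E\big[\widetilde V_M^4\big]-3},
\]
where $\widetilde V_N$ and $\widetilde V_M$ denote the normalized one-dimensional Hermite variations built respectively from $r_\alpha$ and $r_\beta$. The remaining task is to bound each factor separately using known one-dimensional results.

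For case~(1), both parameters satisfy $H\le 1-1/2q$. The proof of \cite[Theorem~4.1]{NP09} (which underlies the total variation bounds of \cite{BN08} restated in the preceding theorem) actually establishes the estimate $\sqrt{\E[\widetilde V_N^4]-3}\le c_{H,q}\,g(q,H,N)$ through direct control of the relevant contraction norms, the $d_{TV}$ bound being a subsequent consequence of the fourth moment theorem. Plugging this estimate in for both indices and taking the product produces the bound $c_{\alpha,\beta,q}\,g(q,\alpha,N)\,g(q,\beta,M)$ claimed in~(1).

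For case~(2), with $\alpha\le 1-1/2q<\beta$, the previous estimate still controls $\sqrt{\E[\widetilde V_N^4]-3}$ by $c_{\alpha,q}\,g(q,\alpha,N)$. For the other factor, Theorem~\ref{thm:DM} implies that $\widetilde V_M$ converges in distribution as $M\to\infty$ to a non-Gaussian element of the $q$-th Wiener chaos; since all the $\widetilde V_M$'s lie in this fixed chaos with unit variance, hypercontractivity furnishes a uniform-in-$M$ bound $\E[\widetilde V_M^4]\le C_{\beta,q}$, so $\sqrt{\E[\widetilde V_M^4]-3}$ remains bounded. The main subtlety of the whole argument is therefore to verify that the bounds obtained in \cite{NP09,BN08} truly originate from pre-fourth-moment-theorem estimates of $\sqrt{\E[\widetilde V_N^4]-3}$ itself rather than from some auxiliary quantity; a careful reading of those proofs confirms that this is indeed the case, so that our product bound can legitimately be assembled from two such one-dimensional estimates.
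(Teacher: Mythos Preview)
Your proposal is correct and follows essentially the same route as the paper: apply the discrete version of Proposition~\ref{prop:quantHq} to factor the bound as a product over the two directions, then import the one-dimensional contraction/fourth-moment estimates from \cite{NP09} for each factor. The only cosmetic difference is in case~(2): you invoke hypercontractivity to bound $\sqrt{\E[\widetilde V_M^4]-3}$ uniformly in $M$, whereas the more direct route already built into the proof of Proposition~\ref{prop:quantHq} is to use \eqref{equ:aiuto}, i.e.\ $\|\widetilde f_2\otimes_r\widetilde f_2\|_{2q-2r}\le \|\widetilde f_2\|_q^2=1$, which gives the same uniform bound without any appeal to convergence or hypercontractivity.
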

       
\end{example}

\begin{example}[Tensor product of regularly varying covariance functions] Let us define a centered Gaussian field $B=(B_x)_{x\in\R^d}$ with separable covariance function as in \eqref{equ:sep}, with $p\ge 2$ and  $C_i:\R^{d_i}\to \R$ defined as \begin{equation}
    C_i(x_i):= \frac{1}{(1+\|x_i\|^2)^{\beta_i/2}}, \quad \beta_i>0,\quad i=1,\ldots,p.
\end{equation} 
%Note that $C_i$ is regularly varying, see \eqref{equ:regvar}, with parameter $-\beta_i<0$, {\color{red}and satisfies \eqref{equ:regspec} (see \cite[Example 3]{LO13})}. 
Let us consider the quadratic variation of this field, that is, the functional $Y(t_1,\ldots,t_p)[2]$ as in \eqref{equ:defIGP} as well as its rescaled version $\widetilde Y$. The parameters $\beta_i$ determine the behavior of the functional. Indeed, combining Theorem~\ref{thm:BM} and Proposition~\ref{prop:suffHq}, it is enough to have $\beta_i>d_i/2$ for at least one $i$ 
for $\widetilde Y(t_1,\dots,t_p)[2]$ to have a Gaussian limit. Moreover, by the application of Proposition~\ref{prop:quantHq}, we may also deduce an upper bound for the rate of convergence in total variation. For a fixed $i$ satisfying $\beta_i> d_i/2$, an upper bound for the rate of convergence in total variation distance of $\widetilde Y_i(t_i)[2]$ (recall \eqref{stand marginal funct}) towards a Gaussian distribution is given by the rate of convergence to $0$ of the norm of the contraction:  
\begin{equation}
    \|\widetilde f_i(t_i)\otimes_1 \widetilde  f_i(t_i)\|^2_2 \lesssim \frac{\left(\int_{{\|x_i\|\le t_i}}C_i(x_i)dx_i\right)^2}{t_i^{d_i}} \asymp \begin{cases}
                t_i^{-d_i} &\text{ if } \beta_i>d_i\\
                \log(t_i)^2t_i^{-d_i} &\text{ if } \beta_i=d_i \\
                t_i^{d_i-2\beta_i} &\text{ if } \beta_i\in(d_i/2,d_i) 
        \end{cases} =: g(\beta_i,t_i)^2,
\end{equation}
where we denote by $\lesssim$ the inequality up to a positive constant. From the above equivalence and  \eqref{equ:quantHqLe}, we obtain that, if $\beta_i>d_i/2$, \begin{equation}
	d_{TV}(\widetilde Y_i(t_i)[2],N(0,1)) \lesssim  g(\beta_i,t_i) .
\end{equation} 
Denoting $J = \{i\in \{1,\ldots,p\}: \beta_i> d_i/2\}$, we can deduce that \begin{equation}
	d_{TV}(\widetilde Y(t_1,\ldots,t_p)[2],N(0,1)) \lesssim \prod_{i\in J} g(\beta_i,t_i) .
\end{equation}
\end{example}

\begin{example}[Gaussian fluctuations in a long-range dependence setting]\label{ex:longmemory}
%Exemplify class of fields with long-memory whose $Y_t$ has Gaussian fluctuations. 
Let us define a centered Gaussian field $B=(B_x)_{\R^2}$ with separable covariance function
%and $C_1,C_2:\R\to \R$ regularly varying (see \eqref{equ:regvar}) with (resp.) parameters $-\beta_1=\frac12$ and $-\beta_2=3$, 
 \begin{equation}
    C(x_1,x_2) = \frac{1}{(1+|x_1|^2)^{1/4}} \cdot \frac{1}{(1+|x_2|^2)^{3/2}}.
\end{equation}
%It can be shown (see \cite[Example 3]{LO13}) that both $C_1$ and $C_2$ satisfy \eqref{equ:regspec}. 
Let us consider \begin{equation}
    Y(t_1,t_2) = \int_{t_1D_1\times t_2D_2} H_2(B_x) dx, \quad \quad t_1,t_2>0,
\end{equation}
where $H_2$ is the second Hermite polynomial and $D_i\subset R$ are compact sets. Then, $R=2$. We have that $C \notin L^2(\R^2)$, hence we are in the long-range dependence case. However, the marginal functional \begin{equation}
    \widetilde Y_2(t_2) = \frac{\int_{t_2D_2} H_2(B^{(2)}_x) dx}{\sqrt{\var\left(\int_{t_2D_2} H_2(B^{(2)}_x) dx\right)}}, 
\end{equation}
see also \eqref{marginal funct}, where $(B^{(2)}_x)_{\R}$ is  centered Gaussian field with covariance function $C_2(x_2)=\frac{1}{(1+|x_2|^2)^{3/2}}$, exhibits Gaussian fluctuations as $t_2\to \infty$ by Theorem~\ref{thm:BM}. Hence, by Theorem~\ref{thm:main} also $\widetilde Y(t_1,t_2)$ does.
\end{example}

\section{Going beyond the separability assumption}\label{sec:nonsep}
While the separability assumption holds true in numerous applications (see, e.g., \cite[Chapter 5]{Christakos1992RandomFM} for examples in hydrology and fluid dynamics, or think of the many frameworks where the fractional Brownian sheet arises, see, e.g., \cite{OZ,SW17}), there also many instances where it does not.

In this section, we aim to examine what can happen when we go beyond the separable case.
First, let us give a counterexample that shows that we cannot always expect a result as simple as Theorem~\ref{thm:main} 
in the non separable context. 

\begin{counterexample}[Separability matters!]\label{ex:sepmatters} Let us consider $p=2$, $d_1=d_2=1$, and thus $d=2$. Fix $R\ge 2$ and consider $\alpha \in (1/R,2/R)$. Let $B=(B_x)_{\R^d}$ be a centered stationary Gaussian field with unit-variance and covariance function $C:\R^d\to \R$ as \begin{equation}
    C(x) = \frac{L(\|x\|)}{\|x\|^\alpha }, \quad x\in \R^d\setminus\{0\},
\end{equation}
with $L$ a slowly varying function, such that $C$ is continuous in $0$. Assuming that $C$ satisfies \eqref{equ:regspec}, since $\alpha<\frac2R$,  Theorem~\ref{thm:DM} applies to the functional \begin{equation}
    Y(t,t)[R] = \int_{t(D_1\times D_2)} H_R(B_x)dx
\end{equation} 
and we obtain that $\widetilde Y(t,t)[R]$ is asymptotically not Gaussian. {Considering the two induced covariance functions $C_{1}(x_{1}):=C(x_{1},0)$ and $C_{2}(x_{2}):=C(0, x_{2})$, we may construct two stationary centered Gaussian random fields $(B^{(i)}_{x_{i}})_{x_{i}\in\mathbb R^{d_{i}}}$, $i=1,2$, having $C_{i}$ as covariance function and the corresponding additive functional $\widetilde Y_i(t)[R]$ on the domain $D_{i}$, as also described in \eqref{stand marginal funct}.} However, both $\widetilde Y_1(t)[R] $ and $\widetilde Y_2(t)[R] $ have Gaussian fluctuations as $t\to\infty$. Since \begin{equation}
    C_i(x_i) = \frac{L(|x_i|)}{|x_i|^\alpha } , \quad i=1,2;
\end{equation} 
with $\alpha > 1/R$, they both satisfy Theorem~\ref{thm:BM}. This counterexample shows how the convergence of both functionals $Y_i(t_i)$ is in general not enough to determine the behavior of $Y(t_1,\ldots,t_p)$, when the separability of the covariance function \eqref{equ:sep} is not satisfied, {even in the particular case $t_{1}=t_{2}=t$}.
\end{counterexample}

% \footnote{{Reviewer: (7) Example 5.1: this example does not fits exactly in the context described previously, since here $t_{1}=t_{2}=t$. What happens if $t_{1}\neq t_{2}$? 
% -----------------------
% Ivan: The case where t1 is different from t2 is out of the scope of our paper. We can say that example 5.1 is a counterexample (so name it "counterexample 5.1"). We should also specify that $C_1(x_1)=C(x_1,0)$ and $C_2(x_2)=C(0,x_2)$.}}

In what follows, we drop the separability assumption by examining
two different classes: Gneiting covariance functions (Section \ref{subsec:gneiting}) and additively separable covariance functions (Section \ref{subsec:additSep}). We will explore whether, akin to the separable case, it is possible to simplify the asymptotic study of 
$Y(t_1,\dots,t_p)$ given by \eqref{func} by reducing it to that of simpler functionals $Y_i(t_i)$.

\subsection{Gneiting covariance functions}\label{subsec:gneiting}

The class of Gneiting covariance functions was first introduced by Gneiting in \cite{G02}. These functions are very popular in many applications, including geostatistics, environmental science, climatology and meteorology.  A \textbf{Gneiting covariance function} $C:\R^d\rightarrow \R$ has the form 
\begin{equation}\label{equ:gneiting}
    C(x_1,x_2)=\psi\left(\|x_2\|^2\right)^{-d_1/2}\varphi\left(\frac{\|x_1\|^2}{\psi\left(\|x_2\|^2\right)}\right),\quad \quad x_1\in\R^{d_1}, x_2\in\R^{d_2},
\end{equation}
where $\varphi:[0,\infty)\rightarrow (0,\infty)$ is a \textbf{completely monotone function}, that is
\[
(-1)^n\varphi^{(n)}(t)\ge0 \quad \quad \forall n\in\mathbb{N}, \ t\ge 0,
\]
and $\psi:[0,\infty)\rightarrow \R_+$ has
completely monotone derivative. The fact that \eqref{equ:gneiting} defines a non-negative definite function on $\R^{d_1+d_2}$ if $d_2=1$ was proved in \cite{G02}, and can be extended to $d_2\ge1$ using similar arguments. Assuming that $B$ is a centered Gaussian random field with unit-variance and covariance function as in~\eqref{equ:gneiting}, we may infer that $C(0,0)=\psi(0)^{-d_1/2}\varphi(0)=1$, and suppose, without loss of generality, that $\varphi(0)=\psi(0)=1$.
As a consequence, the Gaussian fields  $(B^{(1)}_{x_1})_{x_1\in\R^{d_1}}:=(B_{x_1,0})_{x_1\in\R^{d_1}}$ and $(B^{(2)}_{x_2})_{x_2\in\R^{d_2}}:=(B_{0,x_2})_{x_2\in\R^{d_2}}$ have the following covariance functions \begin{align}
    &\label{c1gneiting}C_1(x_1) :=  C(x_1,0) = \varphi\left(\|x_1\|^2\right); \\
    &\label{c2gneiting}C_2(x_2) := C(0,x_2) = \psi\left(\|x_2\|^2\right)^{-d_1/2}.
\end{align}
For this reason, we rewrite \eqref{equ:gneiting} as 
\begin{equation}
    \label{equ:gneiting alternative}C(x_1,x_2)=C_2(x_2)C_1\left(x_1C_2(x_2)^{2/d_1}\right), \quad \quad x_1\in\R^{d_1}, x_2\in\R^{d_2}.
\end{equation}
From the expression \eqref{equ:gneiting alternative}, since $C_1(x_1)$, $C_2(x_2)$ are positive and non-increasing in the norms $\|x_1\|$, $\|x_2\|$, we deduce that 
\begin{equation}
    \label{equ:gneitingproperties}
    C_2(x_2)C_1(x_1)\le C(x_1,x_2)\le C_2(x_2)C_1\left(x_1\,\underline{C_2}(D_2)^{2/d_1}\right),
\end{equation}
where $D_2$ denotes the domain of the variable $x_2$, and
\begin{equation}
    \underline{C_2}(D_2)=\inf_{x_2\in (D_2-D_2)}C_2(x_2)=\psi\left({\rm diam}(D_2)^2\right)^{-d_1/2}.
\end{equation}
Therefore, $C(x_1,x_2)$ is wedged between two separable covariance functions, that is, the product $C_1(x_1)C_2(x_2)$ and the function $C_1(x_1\, \underline{C_2}(D_2)^{2/d_1})C_2(x_2)$. This suggests that the Gneiting case may  be studied combining the bounds \eqref{equ:gneitingproperties} and Theorem \ref{thm:main}. We will partially formalize this intuition in Theorem \ref{thm:gneitingcase}, and we conjecture that the latter could be extended to more general functionals, growing domains and classes of non-separable covariance functions satisfying properties analogous to \eqref{equ:gneitingproperties}. These extensions are left for future research.

Analogously to the version of Theorem~\ref{thm:main} explained in Remark~\ref{rem:fixed}, Theorem \ref{thm:gneitingcase} allows to reduce the study of 
\[
Y(t_1,t_2)[q]=\int_{t_1D_1\times t_2D_2}H_q(B_{x})dx, \quad \quad \text{ as }t_i\rightarrow\infty,
\]
for only one index $i\in\{1,2\}$ (i.e. only the $i$-th domain is growing), to that of the respective marginal functional
\[
Y_i(t_i)[q]=\int_{t_iD_i}H_q(B^{(i)}_{x_i})dx_i, \quad \quad\text{as }t_i\rightarrow\infty,
\]
where $D_i\subseteq \R^{d_i}$ are compact sets with $\vol(D_i)>0$ and $(B^{(i)}_{x_i})_{x_i\in \R^{d_i}}$ are Gaussian fields with covariance functions $C_i$, as defined in \eqref{c1gneiting}.

\begin{theorem}\label{thm:gneitingcase}
    Let $B=(B_x)_{x\in\R^d}$ be a real-valued, continuous, centered, stationary Gaussian field with unit-variance. Let $C:\R^d \rightarrow \R$ be the covariance function of $B$, and assume that $C$ is a Gneiting covariance function, see \eqref{equ:gneiting}-\eqref{equ:gneiting alternative}. Let $Y(t_1,t_2)[q]$, $Y_i(t_i)[q]$ be defined as above, and $\widetilde Y$, $\widetilde Y_i$ their normalized versions (see e.g. \eqref{mainquestion}). Fix $j\in\{1,2\}$. Then, the following holds:
    $$\widetilde{Y}_j(t_j)[q]\overset{d}{\rightarrow} N(0,1),\quad \quad \text{ as }t_j\rightarrow\infty$$ 
    implies
    $$\widetilde{Y}(t_1,t_2)[q]\overset{d}{\rightarrow} N(0,1), \quad \quad \text{ as }t_j\rightarrow\infty$$
    where $t_k$ is fixed, for $k\neq j$.
\end{theorem}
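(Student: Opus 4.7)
My plan is to reduce the problem to the Fourth Moment Theorem~\ref{thm:4th}, which says that $\widetilde Y(t_1,t_2)[q]\to N(0,1)$ is equivalent to $\|\widetilde f(t_1,t_2)\otimes_r\widetilde f(t_1,t_2)\|^2_{2q-2r}\to 0$ for each $r=1,\dots,q-1$. The key tool is the Gneiting sandwich \eqref{equ:gneitingproperties}, which wedges $C$ pointwise between two separable covariance functions on the relevant domain of differences, so that the separable analysis from Proposition~\ref{prop:suffHq} can be brought to bear. I would split the argument according to which index $j$ grows, since the geometry of the Gneiting bounds is not symmetric.

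The easier case is $j=2$ (with $t_1$ fixed). Here the differences $x_1-y_1$ remain in the bounded set $t_1(D_1-D_1)$, so the factor $C_1(\,\cdot\,C_2(x_2)^{2/d_1})$ in \eqref{equ:gneiting alternative} is trapped between two positive constants depending only on $t_1$. Combined with $C_2\in[\underline{C_2}(t_2D_2),1]$, this yields $c_1 C_2(x_2)\le C(x_1,x_2)\le c_2 C_2(x_2)$ for some $0<c_1\le c_2$. Raising to powers and integrating, both $\|f\otimes_r f\|^2$ and $\|f\|^4$ become comparable (up to constants depending only on $t_1$) to $\|f_2(t_2)\otimes_r f_2(t_2)\|^2\cdot\vol(t_1D_1)^4$ and $\|f_2(t_2)\|^4\cdot\vol(t_1D_1)^4$ respectively, so $\|\widetilde f\otimes_r\widetilde f\|^2\lesssim\|\widetilde f_2(t_2)\otimes_r\widetilde f_2(t_2)\|^2\to 0$ by Theorem~\ref{thm:4th} applied to the marginal assumption.

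The more interesting case is $j=1$, where $t_2$ is fixed and $s:=\underline{C_2}(t_2D_2)^{2/d_1}\in(0,1]$ is a fixed positive constant. I would introduce the separable covariances $\check C(x_1,x_2):=C_2(x_2)C_1(x_1)$ and $\hat C(x_1,x_2):=C_2(x_2)C_1(sx_1)$, so that \eqref{equ:gneitingproperties} gives $\check C\le C\le\hat C$. Non-negativity and integration yield $\|f\otimes_r f\|^2\le\|\hat f\otimes_r\hat f\|^2$ and $\|f\|^2\ge\|\check f\|^2$, while the separable factorization (as in Proposition~\ref{prop:suffHq}) gives $\|\hat f\otimes_r\hat f\|^2=\|\hat f_1(t_1)\otimes_r\hat f_1(t_1)\|^2\cdot\|f_2(t_2)\otimes_r f_2(t_2)\|^2$ and $\|\check f\|^2=\|f_1(t_1)\|^2\|f_2(t_2)\|^2$. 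A change of variables $u=sx$ then gives $\|\hat f_1(t_1)\|^2=s^{-2d_1}\|f_1(st_1)\|^2$ and $\|\hat f_1(t_1)\otimes_r \hat f_1(t_1)\|^2=s^{-4d_1}\|f_1(st_1)\otimes_r f_1(st_1)\|^2$, so that the normalized contraction satisfies $\|\widetilde{\hat f_1}(t_1)\otimes_r\widetilde{\hat f_1}(t_1)\|^2=\|\widetilde f_1(st_1)\otimes_r\widetilde f_1(st_1)\|^2$. This vanishes by Theorem~\ref{thm:4th} applied to the marginal hypothesis at scale $st_1\to\infty$, and assembling the pieces yields
\[
\|\widetilde f(t_1,t_2)\otimes_r\widetilde f(t_1,t_2)\|^2\le \|\widetilde f_1(st_1)\otimes_r\widetilde f_1(st_1)\|^2\cdot\|\widetilde f_2(t_2)\otimes_r\widetilde f_2(t_2)\|^2\cdot\frac{\|\hat f_1(t_1)\|^4}{\|f_1(t_1)\|^4}.
\]

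I expect the main obstacle to be the last factor, which equals $s^{-4d_1}\|f_1(st_1)\|^4/\|f_1(t_1)\|^4$: the other two tend to $0$ and are bounded by $1$ respectively (using \eqref{equ:aiuto}), so everything rests on showing this ratio stays bounded as $t_1\to\infty$. When $D_1$ is star-shaped with respect to the origin (a reduction I would first attempt using stationarity of $B^{(1)}$), the inclusion $st_1D_1\subseteq t_1D_1$ and non-negativity of $C_1^q$ give $\|f_1(st_1)\|^2\le\|f_1(t_1)\|^2$ immediately. For general compact $D_1$ I would appeal to the Gneiting structure $C_1(x)=\varphi(\|x\|^2)$ with $\varphi$ completely monotone, splitting into the short-range case (where $\|f_1(t)\|^2\sim t^{d_1}\vol(D_1)\int_{\R^{d_1}}C_1^q$ by Fubini) and the long-range regularly varying case (where $\|f_1(t)\|^2\sim c\,t^{2d_1-q\beta}L(t)^q$), which together show that $t\mapsto\|f_1(t)\|^2$ is regularly varying and hence the ratio is bounded. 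This is precisely where the Gneiting class is genuinely used, consistent with the paper's remark that extending beyond this class is left open.
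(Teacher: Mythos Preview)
Your overall strategy coincides with the paper's: reduce to the Fourth Moment Theorem, use the lower Gneiting bound $C\ge C_1\otimes C_2$ for the variance and the upper bound for the contractions. Your treatment of $j=2$ is essentially identical to the paper's (it uses $0\le C_1\le 1$ in the contraction and the separable lower bound in the variance).

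For $j=1$ the paper is more direct and avoids your variance--ratio detour. After bounding $C_1((x_1-y_1)h(x_2-y_2))\le C_1((x_1-y_1)A)$ with $A=\underline{C_2}(t_2D_2)^{2/d_1}$ and changing variables, the paper simply bounds $C_2\le 1$ to extract $\vol(t_2D_2)^4$ and then uses $A\le 1$ together with positivity of the integrand to dominate $\int_{(At_1D_1)^4}C_1(\cdots)$ by $\int_{(t_1D_1)^4}C_1(\cdots)=\|f_1(t_1)\otimes_r f_1(t_1)\|^2$. Combined with the variance lower bound this yields
\[
\|\widetilde f(t_1,t_2)\otimes_r\widetilde f(t_1,t_2)\|^2\;\le\;\frac{A^{-4d_1}\vol(t_2D_2)^4}{\|f_2(t_2)\|^4}\;\|\widetilde f_1(t_1)\otimes_r\widetilde f_1(t_1)\|^2,
\]
where the first factor is a fixed constant; no factor $\|\hat f_1(t_1)\|^4/\|f_1(t_1)\|^4$ ever appears. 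Note that the step $\int_{(At_1D_1)^4}\le\int_{(t_1D_1)^4}$ is exactly your star-shaped observation $AD_1\subseteq D_1$, used at the level of the \emph{contraction} rather than the variance; so the two arguments have the same scope, but the paper's is shorter.

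Your attempt to go beyond star-shaped $D_1$ via a short-range/regularly-varying dichotomy is not rigorous as stated: complete monotonicity of $\varphi$ does not force $C_1=\varphi(\|\cdot\|^2)$ into either class (consider e.g.\ slowly varying examples), so the claimed boundedness of $\|f_1(st_1)\|^2/\|f_1(t_1)\|^2$ would need a separate argument. The paper, however, does not treat general $D_1$ either; its inequality implicitly uses $AD_1\subseteq D_1$.
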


\begin{proof}[Proof of Theorem \ref{thm:gneitingcase}] The case $q=1$ is trivial because everything is Gaussian, so let us focus on $q\ge2$. We will show the convergence by means of the Fourth Moment Theorem \ref{thm:4th}. First, recall that
\[
\var\left(Y(t_1,t_2)[q]\right)=q!\int_{(t_1D_1\times t_2D_2)^2}C^q(x-y)dx dy.
\]
Thus, by \eqref{equ:gneitingproperties} and \eqref{equ:varHq}, we have
\begin{equation}\label{equ:var lowerbound}
    \var\left(Y_1(t_1)[q]\right)\var\left(Y_2(t_2)[q]\right)\le q!\var\left(Y(t_1,t_2)[q]\right).
\end{equation}
For the sake of brevity, let use denote $h(x_2):=C_2(x_2)^{2/d_1}$. We have
\begin{align}
    & \|f(t_1,t_2)\otimes_r f(t_1,t_2) \|^2   \\
    & \qquad = \int_{(t_1D_1 \times t_2D_2)^4}  C_2(x_2-y_2)^rC_2(z_2-u_2)^rC_2(x_2-z_2)^{q-r}C_2(y_2-u_2)^{q-r} \\ 
    & \qquad\hspace{0.5cm}  \times C_1((x_1-y_1) h(x_2-y_2))^r C_1((z_1-u_1)h(z_2-u_2))^r \\ 
    & \qquad\hspace{0.5cm}  \times C_1((x_1-z_1)h(x_2-z_2))^{q-r}C_1((y_1-u_1)h(y_2-u_2))^{q-r}dx \ dy \ dz \ du.
\end{align}
If we fix $t_1$ and let $t_2\rightarrow\infty$, we get from $0\leq C_1\le1$ that 
    \begin{equation}
         \|f(t_1,t_2)\otimes_r f(t_1,t_2) \|^2 \le  \|f_2(t_2)\otimes_r f_2(t_2) \|^2 \vol(t_1D_1)^4.
\end{equation}
    If we fix $t_2$ and let $t_1\rightarrow\infty$, setting $A:=\underline{C_2}(t_2D_2)^{2/d_1}<\infty$, by \eqref{equ:gneitingproperties}  we obtain
    \begin{align}
    & \|f(t_1,t_2)\otimes_r  f(t_1,t_2) \|^2  \\
    & \quad \le \int_{(t_1D_1 \times t_2D_2)^4} dx \ dy \ dz \ du \\
    &\quad \hspace{0.5cm}  \times C_2(x_2-y_2)^rC_2(z_2-u_2)^rC_2(x_2-z_2)^{q-r}C_2(y_2-u_2)^{q-r}\\
         &\quad \hspace{0.5cm}  \times C_1((x_1-y_1)A)^r C_1((z_1-u_1)A)^r \ C_1((x_1-z_1)A)^{q-r}C_1((y_1-u_1)A)^{q-r} \\
       &\quad = A^{-4d_1}\int_{(At_1D_1 \times t_2D_2)^4} C_2(x_2-y_2)^rC_2(z_2-u_2)^rC_2(x_2-z_2)^{q-r}C_2(y_2-u_2)^{q-r}  \\ &\quad \hspace{0.5cm}  \times C_1(x_1-y_1)^r C_1(z_1-u_1)^r C_1(x_1-z_1)^{q-r}C_1(y_1-u_1)^{q-r}  dx \ dy \ dz \ du   \\
       &\quad \le A^{-4d_1}\, \vol(t_2 D_2)^4  \|f_1(t_1)\otimes_r f_1(t_1) \|^2,
\end{align}
    {where the first inequality is due to \eqref{equ:gneitingproperties} and \eqref{c1gneiting}, which imply $C_{1}(x_{1}h(x_{2}))\le C(x_{1}A)$ for all $x_{i}\in\mathbb R^{d_{i}}$, $i=1,2$; the equality in the middle follows by change of variable $x\leftarrow Ax$; and the last inequality follows since $C_{2}\le1$ and recalling the definition of $\|f_1(t_1)\otimes_r f_1(t_1) \|^2$.}
    Applying the Fourth-Moment Theorem \ref{thm:4th} leads to the desired conclusion.
\end{proof}

\subsection{Additively separable covariance functions}\label{subsec:additSep}

In this subsection we consider \textbf{additively separable covariance functions} of the form
\begin{equation} \label{equ:addsep}
    C(x_1,x_2) = K_1(x_1)+K_2(x_2), \quad \quad x_1\in\R^{d_1}, \ x_2\in\R^{d_2},
\end{equation}
where $K_1$ and $K_2$ are the covariance functions of two stationary, continuous, centered Gaussian fields $(B^{(1)})_{x_1\in\R^{d_1}}$ and $(B^{(2)})_{x_2\in\R^{d_2}}$, with $K_1(0),K_2(0)>0$. From the point of view of applications, the study of Gaussian fields with additively separable covariance function \eqref{equ:addsep} is motivated by the fact that they can model the sum of two independent Gaussian fields.  
Unlike the Gneiting class considered in the previous subsection, which is comparable to the separable case (see \eqref{equ:gneitingproperties}), here reduction theorems have to be developed in a different way. In this case, we define the marginal functionals as
\begin{equation}
\label{marginal additive funct}
A_i(t_i)[q]=\int_{t_iD_i}H_q\left(\frac{B^{(i)}_{x_i}}{K_i(0)}\right)dx_i. 
\end{equation} 
We introduce new quantities to describe the interplay between the growth rates of the volumes and the variances, that are the quotients
\begin{equation}
\label{quotients}
    \gamma^i_{t_i}[q] := \frac{\int_{(t_iD_i)^2}K_i^q(x_i-y_i)dx_idy_i}{\vol(t_iD_i)^2}.
    \end{equation}
\begin{theorem}\label{thm:main 4}
Let $B=(B_x)_{x\in\R^d}$ be a real-valued, continuous, centered, stationary Gaussian field with unit-variance. Let $C:\R^d \rightarrow \R$ be the covariance function of $B$, and assume it is additively separable in the sense of \eqref{equ:addsep}, with $K_1,K_2\ge0$. Let $\widetilde Y$ be as in \eqref{mainquestion}. For $i=1,2$, let $A_i(t_i)[q]$ be as in \eqref{marginal additive funct}, $\widetilde A_i$ its normalized versions (see e.g. \eqref{mainquestion}), and recall the definition \eqref{quotients} of $\gamma^i_{t_i}[q]$. Finally, assume that 
 \[
 \frac{\gamma^1_{t_1}[q]}{\gamma^2_{t_2}[q]}\rightarrow0 \quad \text{as $t_1,t_2\rightarrow\infty$}.
 \]
 Then, the following holds:
 \[
 \widetilde{A}_2(t_2)[q]\overset{d}{\rightarrow} N(0,1), \quad\text{ as }t_2\rightarrow\infty ,
 \]
 if and only if
 \[
 \widetilde{Y}_1(t_1,t_2)[q]\overset{d}{\rightarrow} N(0,1), \quad \text{ as }t_1,t_2\rightarrow\infty.
 \]
\end{theorem}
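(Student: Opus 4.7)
My strategy is to exploit the fact that an additively separable covariance $C=K_1+K_2$ corresponds, in distribution, to the pointwise sum of two independent Gaussian fields. On an enlarged probability space I take $B^{(1)},B^{(2)}$ independent stationary centered Gaussian fields with covariance functions $K_1,K_2$ and use the representation
\[
B_{(x_1,x_2)} \overset{d}{=} B^{(1)}_{x_1} + B^{(2)}_{x_2}.
\]
Since $K_1(0)+K_2(0)=C(0)=1$, setting $a_i=\sqrt{K_i(0)}$ and $X_i=B^{(i)}/a_i$, the classical Hermite-of-sum identity
\[
H_q(a_1 X + a_2 Y) = \sum_{k=0}^q \binom{q}{k} a_1^k a_2^{q-k} H_k(X) H_{q-k}(Y)
\]
(valid for independent standard normals $X,Y$ when $a_1^2+a_2^2=1$) combined with Fubini yields the decomposition
\[
Y(t_1,t_2)[q] = \sum_{k=0}^q \binom{q}{k} K_1(0)^{k/2} K_2(0)^{(q-k)/2}\, A^{(1)}_k(t_1)\, A^{(2)}_{q-k}(t_2),
\]
where I write $A^{(i)}_k(t_i):=\int_{t_i D_i} H_k(B^{(i)}_{x_i}/\sqrt{K_i(0)})\,dx_i$, so that $A^{(i)}_q(t_i)$ coincides with the marginal functional $A_i(t_i)[q]$. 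The extreme terms $k=0$ and $k=q$ are respectively proportional to $\vol(t_1 D_1)\,A_2(t_2)[q]$ and $\vol(t_2 D_2)\,A_1(t_1)[q]$, while the intermediate terms are products of independent centered chaoses.

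Next I compute variances. Because $B^{(1)}$ and $B^{(2)}$ are independent and each $A^{(i)}_k$ with $k\geq 1$ lies in the $k$th Wiener chaos of its own field, the summands indexed by distinct values of $k$ are centered and mutually orthogonal in $L^2$. Using $\var(A^{(i)}_k(t_i))=k!\,K_i(0)^{-k}\,\vol(t_i D_i)^2\,\gamma^i_{t_i}[k]$ (with $\gamma^i_{t_i}[0]=1$), this gives
\[
\var(Y(t_1,t_2)[q]) = \vol(t_1 D_1)^2 \vol(t_2 D_2)^2 \sum_{k=0}^q \binom{q}{k}^2 k!(q-k)!\,\gamma^1_{t_1}[k]\,\gamma^2_{t_2}[q-k].
\]
The heart of the proof is to show, under the hypothesis $\gamma^1_{t_1}[q]/\gamma^2_{t_2}[q]\to 0$, that the $k=0$ term dominates this sum. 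For this I apply H\"older's inequality with conjugate exponents $q/k$ and $q/(q-k)$ to $\int_{(t_i D_i)^2} K_i(x_i-y_i)^k \cdot 1\, dx_i dy_i$; since $K_i\geq 0$, this produces the interpolation bound
\[
\gamma^i_{t_i}[k] \leq \bigl(\gamma^i_{t_i}[q]\bigr)^{k/q}, \qquad 0\leq k \leq q.
\]
Multiplying for both indices, the ratio of the $k$th term to the $k=0$ term is bounded by a constant times $(\gamma^1_{t_1}[q]/\gamma^2_{t_2}[q])^{k/q}$, which tends to $0$ for every $k\geq 1$.

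Consequently $\var(Y(t_1,t_2)[q])\sim K_2(0)^q \vol(t_1 D_1)^2\,\var(A_2(t_2)[q])$. Writing $Y(t_1,t_2)[q]=K_2(0)^{q/2}\vol(t_1 D_1)\,A_2(t_2)[q]+R(t_1,t_2)$ with $R$ the sum of the remaining $q$ terms, the orthogonality of the decomposition then gives $\var(R)/\var(Y)\to 0$. A short calculation dividing by the respective standard deviations, together with the fact that both resulting variables are centered with unit variance, now yields
\[
\widetilde{Y}(t_1,t_2)[q] - \widetilde{A}_2(t_2)[q] \longrightarrow 0 \quad \text{in } L^2,
\]
from which the claimed equivalence follows by Slutsky's theorem. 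The main obstacle is the H\"older interpolation step: it is precisely what allows a single quantitative hypothesis at order $q$ to propagate and neutralize every intermediate cross-term $A^{(1)}_k(t_1)\,A^{(2)}_{q-k}(t_2)$, and it crucially uses the non-negativity of $K_1, K_2$ so that the inequality may be applied directly to the raw covariances.
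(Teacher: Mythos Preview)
Your argument is correct and takes a genuinely different route from the paper.

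The paper works entirely at the level of covariance kernels: it expands $C^q=(K_1+K_2)^q$ by Newton's binomial to obtain the variance formula (Lemma~\ref{lem:var_sum_2}), then performs the analogous multinomial expansion inside the contraction norms $\|f(t_1,t_2)\otimes_r f(t_1,t_2)\|^2$, invokes a separate technical inequality (Lemma~\ref{lem:bound contr}) together with the same Jensen/H\"older interpolation $\gamma^i_{t_i}[k]\le(\gamma^i_{t_i}[q])^{k/q}$ that you use, and finally shows $\|\widetilde f(t_1,t_2)\otimes_r\widetilde f(t_1,t_2)\|^2\sim\|\widetilde f_2(t_2)\otimes_r\widetilde f_2(t_2)\|^2$, concluding via the Fourth Moment Theorem. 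You instead exploit the probabilistic representation $B_{(x_1,x_2)}\overset{d}{=}B^{(1)}_{x_1}+B^{(2)}_{x_2}$ with $B^{(1)},B^{(2)}$ independent, apply the Hermite addition formula, and obtain an \emph{orthogonal} decomposition of $Y(t_1,t_2)[q]$ into products of independent chaotic components. This lets you bypass contractions and the Fourth Moment Theorem altogether: once the $k=0$ term dominates the variance (same interpolation step as the paper), you get $\widetilde Y(t_1,t_2)[q]-\widetilde A_2(t_2)[q]\to 0$ in $L^2$ directly.

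Your approach is shorter and yields a strictly stronger conclusion: the $L^2$ asymptotic identification implies that $\widetilde Y(t_1,t_2)[q]$ and $\widetilde A_2(t_2)[q]$ share the \emph{same} limiting distribution whenever either converges, not merely that normality of one is equivalent to normality of the other. The paper's kernel-level approach, on the other hand, does not require constructing the auxiliary independent fields and stays closer to the contraction machinery used elsewhere in the article, which may be an advantage if one wants to extend the analysis to covariance structures that are not exactly additive.
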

\begin{remark}
    Note that, by symmetry, the roles of $K_1$ and $K_2$ can be exchanged in the statement of Theorem \ref{thm:main 4}, obtaining an equivalence between $\widetilde{A}_1(t_1)[q]\overset{d}{\rightarrow} N(0,1)$ and $\widetilde{Y}(t_1,t_2)[q]\overset{d}{\rightarrow} N(0,1)$ as $t_1,t_2\rightarrow\infty$, if $\frac{\gamma^2_{t_2}[q]}{\gamma^1_{t_1}[q]}\rightarrow0$. 
\end{remark}

\begin{remark}\label{rem:Quotients}
To ease the computation of the quotients $\gamma^i_{t_i}[q]$, $i=1,2$, as noted in Equation \eqref{equ:remQuotients}, assuming that $K_i$ is both non-negative and non-negative definite, one can observe that \begin{equation}
    \gamma_{t_i}^i[q]\asymp\frac{\int_{ \{ \|x\|\le t_i \}} K_i^q(x_i)dx_i}{{t_i}^{d_i}} .
\end{equation}
\end{remark}

Just like Theorem \ref{thm:main} and Theorem \ref{thm:gneitingcase}, Theorem \ref{thm:main 4} should be interpreted as a reduction theorem, since it allows to reduce the asymptotic problem of a $2$-domain functional ${Y}(t_1,t_2)[q]$ to that of a $1$-domain functional.
The difference here is that the marginal functionals to be considered in the additively separable case are not the same considered for the separable and Gneiting classes. Moreover, unlike the latter cases, here the growth rates of the integration domains come into play by means of the quotients $\gamma_{t_i}^i[q]$ in \eqref{quotients} (see also Example~\ref{exa:slogt}). In order to prove Theorem \ref{thm:main 4}, we state and prove the following lemma.
\begin{lemma}\label{lem:var_sum_2} Under the notations and assumptions of Theorem~\ref{thm:main 4}, we have that
    \begin{multline}\label{equ:thm_red_sum_2}
        \var(Y({t_1,t_2})[q]) = \vol(t_1D_1)^2  \var(A_2({{t_2}})[q])+\var(A_1({t_1})[q])\vol({t_2}D_2)^2\\
        +\sum_{k=1}^{q-1} \binom{q}{k}^2 \var(A_1(t_1)[k])\var(A_2({t_2})[q-k]).
    \end{multline}
    In addition, assuming $q\ge2$, we have that
    \begin{equation}
     \frac{\var(Y(t_1,t_2)[q])}{\vol(t_1D_1)^2 \var(A_2(t_2)[q])}\longrightarrow 1\quad \quad \text{ as }t_1,t_2\rightarrow\infty.
    \end{equation}
\end{lemma}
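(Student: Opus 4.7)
The plan is to exploit that the additive separability $C=K_1+K_2$ allows the representation $B_{(x_1,x_2)} = B^{(1)}_{x_1}+B^{(2)}_{x_2}$ with $B^{(1)}$ and $B^{(2)}$ independent. Since $\var(B)=1$, one has $K_1(0)+K_2(0)=1$, so setting $\sigma_i:=\sqrt{K_i(0)}$ gives $\sigma_1^2+\sigma_2^2=1$, and $\tilde B^{(i)} := B^{(i)}/\sigma_i$ is unit-variance. The classical Hermite addition formula
\[
H_q(\sigma_1 u+\sigma_2 v) \;=\; \sum_{k=0}^{q}\binom{q}{k}\sigma_1^{k}\sigma_2^{q-k}\,H_k(u)\,H_{q-k}(v),
\]
which holds precisely because $\sigma_1^2+\sigma_2^2=1$ (immediate from the generating function identity $e^{tx-t^2/2}=\sum_k (t^k/k!) H_k(x)$), applied at $(u,v)=(\tilde B^{(1)}_{x_1},\tilde B^{(2)}_{x_2})$ and integrated over $t_1D_1\times t_2D_2$ using Fubini, yields
\[
Y(t_1,t_2)[q] \;=\; \sum_{k=0}^{q}\binom{q}{k}\,A_1(t_1)[k]\,A_2(t_2)[q-k],
\]
with the prefactors $\sigma_1^{k}\sigma_2^{q-k}$ absorbed into the normalization of $A_i(t_i)[k]$ coming from \eqref{marginal additive funct} (calibrated exactly so that $\var(A_i(t_i)[k])=k!\,\vol(t_iD_i)^2\,\gamma^i_{t_i}[k]$).

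To derive \eqref{equ:thm_red_sum_2}, I would compute the variance of this expansion using independence of $B^{(1)}$ and $B^{(2)}$ together with orthogonality of Wiener chaos of distinct orders within each factor: for $k\ne k'$, the $(k)$- and $(k')$-summands are uncorrelated (either both factors live in orthogonal chaos on the same side, or one of them equals the deterministic $A_i(t_i)[0]=\vol(t_iD_i)$ while its twin is centered). For $k=k'$, the second moment factorizes as $\var(A_1(t_1)[k])\var(A_2(t_2)[q-k])$. Separating the boundary terms $k=0$ and $k=q$ recovers precisely the three contributions in \eqref{equ:thm_red_sum_2}.

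For the asymptotic statement, divide \eqref{equ:thm_red_sum_2} by $\vol(t_1D_1)^2\var(A_2(t_2)[q])$. Using the closed-form $\var(A_i(t_i)[k]) = k!\,\vol(t_iD_i)^2\,\gamma^i_{t_i}[k]$, the ratio contributed by the second summand is exactly $\gamma^1_{t_1}[q]/\gamma^2_{t_2}[q]$, which vanishes by hypothesis. For the mixed terms $1\le k\le q-1$, I would invoke Jensen's inequality with the convex map $u\mapsto u^{q/k}$ (legitimate because $K_i\ge 0$) on the probability space $((t_iD_i)^2, dxdy/\vol(t_iD_i)^2)$ applied to $f=K_i^k$, giving
\[
\gamma^i_{t_i}[k]^{q/k} \;\le\; \gamma^i_{t_i}[q], \qquad \text{i.e.}\qquad \gamma^i_{t_i}[k]\le \gamma^i_{t_i}[q]^{k/q}.
\]
Hence the $k$-th mixed ratio is dominated by a universal multiple of $\bigl(\gamma^1_{t_1}[q]/\gamma^2_{t_2}[q]\bigr)^{k/q}$, which tends to $0$ by hypothesis.

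The main potential obstacle is not conceptual but bookkeeping: verifying that the standardization constants $\sigma_i^k$ arising from the Hermite addition formula match exactly the normalization chosen in \eqref{marginal additive funct} for $A_i(t_i)[k]$, so that both the identity \eqref{equ:thm_red_sum_2} and the clean variance formula $\var(A_i(t_i)[k])=k!\,\vol(t_iD_i)^2\,\gamma^i_{t_i}[k]$ come out free of any residual $K_i(0)$-dependent factors. Once this calibration is settled, the Jensen argument for the mixed terms and the remaining asymptotics are routine.
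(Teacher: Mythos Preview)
Your approach is correct but genuinely different from the paper's for the variance identity. The paper proceeds by direct computation: it writes $\var(Y(t_1,t_2)[q])=q!\int_{(t_1D_1\times t_2D_2)^2}C(x-y)^q\,dx\,dy$, expands $(K_1+K_2)^q$ via Newton's binomial, and factorizes the resulting integrals over $(t_1D_1)^2$ and $(t_2D_2)^2$; this yields \eqref{equ:thm_red_sum_2} in three lines with no probabilistic input beyond the standard $q$th-chaos variance formula. Your route---realizing $B$ in law as an independent sum $B^{(1)}+B^{(2)}$, applying the Hermite addition formula, and using chaos orthogonality together with independence---is more structural and in fact delivers more: you obtain a \emph{pathwise} decomposition $Y(t_1,t_2)[q]=\sum_k\binom{q}{k}A_1(t_1)[k]\,A_2(t_2)[q-k]$, not merely the variance identity. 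This extra information could streamline the subsequent distributional analysis in Theorem~\ref{thm:main 4}, whereas the paper's bare covariance computation forces a separate contraction argument there. The trade-off is exactly the calibration issue you flag: the paper's route is agnostic to how $A_i$ is normalized (it only needs $\var(A_i(t_i)[k])=k!\int_{(t_iD_i)^2}K_i^k$), while yours must track the $\sigma_i^k$ prefactors carefully---and indeed the literal definition in \eqref{marginal additive funct} divides by $K_i(0)$ rather than $\sqrt{K_i(0)}$, so the constants do not match on the nose without reinterpreting that normalization. For the asymptotic statement both proofs coincide: the paper uses precisely the Jensen bound $\gamma^i_{t_i}[k]\le \gamma^i_{t_i}[q]^{k/q}$ (equivalently $\var(A_i(t_i)[k])\lesssim \vol(t_iD_i)^{2(q-k)/q}\var(A_i(t_i)[q])^{k/q}$) that you describe.
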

\begin{proof} Recall $q\ge2$. Applying Newton's binomial formula, we may write
        \begin{align}
        & \var(Y(t_1,{t_2})[q])  = q! \int_{({t_1}D_1\times {t_2}D_2)^2} C(x-y)^q dxdy \\
        &\quad = q! \int_{(t_1D_1\times t_2D_2)^2} (K_1(x_1-y_1)+K_2(x_2-y_2))^q dx_1dx_2dy_1dy_2 \\
        &\quad = \sum_{k=0}^q q! \binom{q}{k}\int_{(t_1D_1)^2} K_1(x_1-y_1)^kdx_1dy_1\cdot \int_{( t_2D_2)^2}K_2(x_2-y_2)^{q-k} dx_2dy_2,
        \end{align}
        which is \eqref{equ:thm_red_sum_2}.
        Now assume that $\gamma^1_{t_1}[q]/\gamma^2_{t_2}[q]\rightarrow 0$. Then, as $t_1, t_2\rightarrow\infty$  \begin{equation*}
        \frac{\var(Y(t_1,t_2)[q])}{\vol(t_1D_1)^2 \var(A_2(t_2)[q])} \longrightarrow 1.
    \end{equation*}
    Indeed, \begin{equation*}
         \frac{\var(A_1(t_1)[q])\vol(t_2D_2)^2}{\vol(t_1D_1)^2  \var(A_2(t_2)[q])}=\frac{\gamma^1_{t_1}[q]}{\gamma^2_{t_2}[q]}\longrightarrow 0
    \end{equation*} 
    by assumption.
    By Jensen inequality in $t_i$ and recalling that $K_i\ge0$, we have that 
    \begin{equation}
        \var(A_i(t_i)[k]) =  k!\int_{(t_iD_i)^2}K_i^k(x_i-y_i)dx_idy_i
         \lesssim  \vol(t_iD_i)^{2(q-k)/q}\var(A_i({t_i})[q])^{k/q}.
    \end{equation}
    Therefore, the proof is concluded by observing that, for $k>0$, we have
    \begin{equation}
    \frac{\var(A_1(t_1)[k])\var(A_2(t_2)[q-k])}{\vol(t_1D_1)^2  \var(A_2(t_2)[q])} \lesssim \frac{\var(A_1(t_1)[q])^{k/q}\vol(t_2D_2)^{2k/q}}{\vol(t_1D_1)^{2k/q}\var(A_2(t_2)[q])^{k/q}} =\frac{\gamma^1_{t_1}[q]^{k/q}}{\gamma^2_{t_2}[q]^{k/q}}\longrightarrow 0.
    \end{equation}
    \end{proof}

\begin{proof}[Proof of Theorem \ref{thm:main 4}]
By assumption, $\gamma^1_{t_1}[q]/\gamma_{t_2}^2[q]\rightarrow 0$. By Lemma \ref{lem:var_sum_2}, as $t_1,t_2\rightarrow\infty$ we have
\[
\var(Y(t_1,t_2)[q])\sim \vol(t_1D_1)^2\var(A_2(t_2)[q]).
\]
Regarding contractions, by using the Newton binomial formula, we have 
\begin{align}
&\|f(t_1,t_2)\otimes_r f(t_1,t_2) \|^2\\
&\quad =\int_{(t_1D_1 \times t_2D_2)^4}  C(x-y)^rC_2(z-u)^rC(x-z)^{q-r}C_2(y-u)^{q-r}dx \ dy \ dz \ du \\
         &=\int_{(t_1D_1 \times t_2D_2)^4}  \quad  dx \ dy \ dz \ du \\
         &\hspace{2cm}   (K_1(x_1-y_1)+K_2(x_2-y_2))^r(K_1(z_1-u_1)+K_2(z_2-u_2))^r\\
         &\hspace{2cm}  \times (K_1(x_1-z_1)+K_2(x_2-z_2))^{q-r}(K_1(y_1-u_1)+K_2(y_2-u_2))^{q-r}
         \\
&=\sum_{k_1,k_2=0}^r\sum_{k_3,k_4=0}^{q-r}\binom{r}{k_1}\binom{r}{k_2}\binom{q-r}{k_3}\binom{q-r}{k_4} \\
         &\quad \int_{(t_1D_1)^4} dx_1 \ dy_1 \ dz_1 \ du_1\\
         &\hspace{3cm} K_1(x_1-y_1)^{k_1} K_1(z_1-u_1)^{k_2}K_1(x_1-z_1)^{k_3} K_1(y_1-u_1)^{k_4} \\
         &\quad \int_{(t_2D_2)^4} dx_2 \ dy_2 \ dz_2 \ du_2 \\
         & \hspace{3cm} K_2(x_2-y_2)^{r-k_1} K_2(z_2-u_2)^{r-k_2}K_2(x_2-z_2)^{q-r-k_3} K_2(y_2-u_2)^{q-r-k_4}\\
&=\sum_{k_1,k_2=0}^r\sum_{k_3,k_4=0}^{q-r}\mathbf{1}_{\{(k_1,k_2,k_3,k_4)\neq(0,0,0,0)\}}\binom{r}{k_1}\binom{r}{k_2}\binom{q-r}{k_3}\binom{q-r}{k_4} \\
         &\quad \int_{(t_1D_1)^4} dx_1 \ dy_1 \ dz_1 \ du_1 \\
         & \hspace{3cm} K_1(x_1-y_1)^{k_1} K_1(z_1-u_1)^{k_2}K_1(x_1-z_1)^{k_3} K_1(y_1-u_1)^{k_4} \\
         &\quad \int_{(t_2D_2)^4}  dx_2 \ dy_2 \ dz_2 \ du_2 \\
         & \hspace{3cm} K_2(x_2-y_2)^{r-k_1} K_2(z_2-u_2)^{r-k_2}K_2(x_2-z_2)^{q-r-k_3} K_2(y_2-u_2)^{q-r-k_4}\\
         &\quad +\vol(t_1D_1)^4\|f_2(t_2)\otimes_r f_2(t_2) \|^2\,.
    \end{align}
Moreover, we have
\begin{align}
& \sum_{k_1,k_2=0}^r \sum_{k_3,k_4=0}^{q-r} \mathbf{1}_{\{(k_1,k_2,k_3,k_4) \neq(0,0,0,0)\}} \binom{r}{k_1}\binom{r}{k_2}\binom{q-r}{k_3}\binom{q-r}{k_4} 
\\
& \quad \int_{(t_1D_1)^4} dx_1 \ dy_1 \ dz_1 \ du_1 \\
& \hspace{3cm} K_1(x_1-y_1)^{k_1} K_1(z_1-u_1)^{k_2}K_1(x_1-z_1)^{k_3} K_1(y_1-u_1)^{k_4}\\
&\quad \int_{(t_2D_2)^4}dx_2 \ dy_2 \ dz_2 \ du_2 \\
& \hspace{3cm} K_2(x_2-y_2)^{r-k_1} K_2(z_2-u_2)^{r-k_2}K_2(x_2-z_2)^{q-r-k_3} K_2(y_2-u_2)^{q-r-k_4} \\
    &  \lesssim \sum_{k_1,k_2=0}^r\sum_{k_3,k_4=0}^{q-r}\mathbf{1}_{\{(k_1,k_2,k_3,k_4)\neq(0,0,0,0)\}}\binom{r}{k_1}\binom{r}{k_2}\binom{q-r}{k_3}\binom{q-r}{k_4}  \times  \\
         & \hspace{1.3cm} \times 
        \int_{(t_1D_1)^2}K_1(x_1-y_1)^{k_1+k_3}dx_1dy_1\int_{(t_2D_2)^2}K_2(x_2-y_2)^{q-k_1-k_3}dx_2dy_2\\
        & \hspace{1.3cm} \times\int_{(t_1D_1)^2}K_1(z_1-u_1)^{k_2+k_4}dz_1du_1\int_{(t_2D_2)^2}K_2(z_2-u_2)^{q-k_2-k_4}dz_2du_2,
    \end{align}
where the last inequality follows from the positivity of $K_1,K_2$ and Lemma~\ref{lem:bound contr} applied to every term of the sum. Then, using Jensen as in the proof of Lemma \ref{lem:var_sum_2}, we obtain that, as $t_1,t_2\rightarrow\infty$,
\begin{align}
        & \|\widetilde f(t_1,t_2)\otimes_r  \widetilde f(t_1,t_2) \|_{2q-2r}^2  = (q!)^2 \frac{\|f(t_1,t_2)\otimes_r f(t_1,t_2) \|_{2q-2r}^2}{\var(Y(t_1,t_2))^2}\\
        & \quad \sim \frac{\|f(t_1,t_2)\otimes_r f(t_1,t_2) \|^2}{\vol(t_1D_1)^4 \var(A_2(t_2))^2}\\
        &\quad = \frac{\|f_2(t_2)\otimes_r f_2(t_2) \|^2}{\var(A_2(t_2))^2} \\
        & \hspace{1.3cm}+ O \left(\sum_{k_1,k_2=0}^r \sum_{k_3,k_4=0}^{q-r} \mathbf{1}_{\{(k_1,k_2,k_3,k_4)\neq(0,0,0,0)\}} \left(\frac{\gamma_{t_1}^1[q]}{\gamma_{t_2}^2[q]}\right)^{(k_1+k_3)/q}\left(\frac{\gamma_{t_1}^1[q]}{\gamma_{t_2}^2[q]}\right)^{(k_2+k_4)/q}\right)\\
        &\quad \sim (q!)^2 \frac{\|f_2(t_2)\otimes_r f_2(t_2) \|_{2q-2r}^2}{\var(A_2(t_2))^2} = \|\widetilde f_2(t_2)\otimes_r \widetilde f_2(t_2) \|_{2q-2r}^2.
    \end{align}
Therefore, the proof is again concluded by means of the Fourth Moment theorem \ref{thm:4th}.
\end{proof}

We conclude the section with an example.

\begin{example} \label{exa:slogt} 
Let us consider a Gaussian random field $B=(B_x)_{x\in\R^{d_1+d_2}}$ with covariance function of the form \eqref{equ:addsep}, 
choosing
\begin{equation}
    K_1(x_1)=\frac{1}{(1+\|x_1\|^2)^{\beta_1/2}}, \quad \quad K_2(x_2)=\frac{1}{(1+\|x_2\|^2)^{\beta_2/2}}.
\end{equation}
and consider its quadratic variation, that is \begin{equation}
    Y(t_1,t_2)[2]=\int_{t_1D_1\times t_2D_2}H_2(B_{x})dx.
\end{equation}
Suppose that $\beta_1\in (0,d_1/2)$and $\beta_2>d_2/2$. Note that $K_1$ satisfies both \eqref{equ:regvar} and \eqref{equ:regspec} (see \cite[Example 3]{LO13}). Then, by Theorem~\ref{thm:DM} the functional $\widetilde A_1(t_1)[2]$, see \eqref{marginal additive funct} properly normalized, is not asymptotically Gaussian; on the other hand, $\widetilde A_2(t_2)[2]$ is asymptotically Gaussian, thanks to Theorem~\ref{thm:BM}. Note that, up to constants, recalling Remark~\ref{rem:Quotients}, \begin{equation}
    \frac{\gamma^1_{t_1}[2]}{\gamma^2_{t_2}[2]} \sim  \frac{t_1^{-2\beta_1}}{t_2^{-d_2}}.
\end{equation}
Therefore, we may observe two different behaviors depending on the choice of the rate for $t_1$ and $t_2$: applying Theorem \ref{thm:main 4}, whenever $t_1^{-2\beta_1}=o( t_2^{-d_2})$, we have that $\widetilde{Y}(t_1,t_2)$ is not asymptotically Gaussian; conversely, when $ t_2^{-d_2}=o(t_1^{-2\beta_1})$ we have a Gaussian limiting behavior. 
\end{example}

\section*{Appendix}
\label{sec:appendix}

\begin{proof}[Proof of Lemma \ref{change}] The following proof is merely a reformulation of \cite[Theorem 4.5]{M81} with our notation. Since $h\in \mathcal{H}_{\nu}^{\odot q}$, we have the equality 
    \[
    h=\sum_{i_1,\dots,i_q=1}^\infty c_{i_1,\dots,i_q} \,{\rm Sym}(e_{i_1}\otimes \dots \otimes e_{i_q}),
    \]
    where $\{e_i\}_{i=1}^\infty$ is an orthonormal basis of $\mathcal{H}_{\nu}$ and Sym is the symmetrization operator. If we define $e_i'(x):=e_i(x)\, a(x)\in L_E^2(\nu')$, then $\{e'_i\}_{i=1}^\infty$ is an orthonormal basis of $\mathcal{H}_{\nu'}$ and we obtain 
    \[
    a^{\otimes q} h=\sum _{i_1,\dots,i_q=1}^\infty c_{i_1,\dots,i_q} \,{\rm Sym}(e'_{i_1}\otimes \dots \otimes e'_{i_q}).
    \]
    Applying the operators $I_{\nu,q}$ and $I_{\nu',q}$ we obtain 
    \[
    I_{\nu,q}(h)=\sum_{i_1,\dots,i_q=1}^\infty c_{i_1,\dots,i_q} \,I_{\nu,q}\left({\rm Sym}(e_{i_1}\otimes \dots \otimes e_{i_q})\right)
    \]
    and
    \[
    I_{\nu',q}(a^{\otimes q}\,h)=\sum_{i_1,\dots,i_q=1}^\infty c_{i_1,\dots,i_q} \,I_{\nu',q}\left({\rm Sym}(e'_{i_1}\otimes \dots \otimes e'_{i_q})\right).
    \]
    Therefore, we conclude by means of product formula (see e.g. \cite{bluebook}), observing that
    \[
    I_{\nu,q}(h)=\sum_{i_1,\dots,i_q=1}^\infty c_{i_1,\dots,i_q} X_{\nu}(e_{i_1}) \dots  X_{\nu}(e_{i_q})\overset{d}{=}\sum_{i_1,\dots,i_q=1}^\infty c_{i_1,\dots,i_q} N_{i_1} \dots  N_{i_q}
    \]
    and
    \[
    I_{\nu',q}(a^{\otimes q}h)=\sum_{i_1,\dots,i_q=1}^\infty c_{i_1,\dots,i_q} X_{\nu}(e'_{i_1}) \dots  X_{\nu}(e'_{i_q})\overset{d}{=}\sum_{i_1,\dots,i_q=1}^\infty c_{i_1,\dots,i_q} N_{i_1} \dots  N_{i_q},
    \]
    where $\{N_{i}\}_{i=1}^\infty$ is a sequence of i.i.d. standard Gaussian random variables.
\end{proof}

 \begin{proof}[Proof of Lemma \ref{change2}]
     Acting as in the proof of Lemma \ref{change}, we have
     \[
    I_{q}(h(\cdot))=\sum_{i_1,\dots,i_q=1}^\infty c_{i_1,\dots,i_q} \,X(e_{i_1}( \cdot)) \dots  X(e_{i_q}( \cdot))),
    \]
    where $\{e_i\}_{i=1}^\infty$ is an orthonormal basis of $\mathcal{H}_\lambda$. Note that using the multi-index notation $\underline s^{\alpha} = \prod_i s_i^{\alpha}$ for $s\in \mathbb N^d$ and $\alpha\in \R$, also the family $\{\underline{s}^{-1/2}e_i(\cdot/s_1,\dots,\cdot/s_d)\}_{i=1}^\infty$ is an orthonormal basis of $\mathcal{H}_\lambda$. Therefore, the two Gaussian families $(X(e_i(\cdot)))_{i=1}^\infty$ and $(X(\underline{s}^{-1/2}e_i(\cdot/s_1,\dots,\cdot/s_d)))_{i=1}^\infty$ share the same law, and, by extension, the following random variables are equally distributed: \begin{equation}
    I_{q}(h(\cdot))\os{\rm law}{=}\sum_{i_1,\dots,i_q=1}^\infty c_{i_1,\dots,i_q} \,X(\underline{s}^{-1/2}e_{i_1}( \cdot/s_{1},\dots,\cdot/s_d)) \dots \\ \dots  X(\underline{s}^{-1/2}e_{i_q}( \cdot/s_{1},\dots,\cdot/s_d)).
    \end{equation}
    Moreover, by product formula (see e.g. \cite{bluebook}), we have that
    \begin{multline}
        \sum_{i_1, \dots, i_q=1}^\infty  c_{i_1,\dots,i_q}  \,X(\underline{s}^{-1/2}e_{i_1}( \cdot/s_{1},\dots,\cdot/s_d)) \dots  X(\underline{s}^{-1/2}e_{i_q}(\cdot/s_{1},\dots,\cdot/s_d)) \\
    \overset{d}{=}I_{q}(\underline{s}^{-\frac{q}{2}}\tilde h(s_1,\ldots,s_d)).
    \end{multline}
    By linearity, $I_{q}(\underline{s}^{-\frac{q}{2}}\tilde h(s_1,\ldots,s_d))=\underline{s}^{-\frac{q}{2}}I_{q}(\tilde h(s_1,\ldots,s_d))$, which concludes the proof.
 \end{proof} 
 
\begin{lemma} \label{lem:bound contr} Let $K:\R^d\rightarrow \R$ be a non-negative definite function such that $K\ge0$. Suppose that $D\subseteq\R^d$ compact with ${\rm Vol}(D)>0$. Then, for every $k_1,k_2,k_3,k_4\in \mathbb N$, there exists a positive constant $c$, depending on $K, D$ and the exponents $k_i$, $i=1,2,3,4$, satisfying for all $t>0$ the following inequality: 
    \begin{multline}\label{equ:lemmabound}
        \int_{(tD)^4} K(x-y)^{k_1} K(z-u)^{k_2}K(x-z)^{k_3} K(y-u)^{k_4} dx  dy  dz  du \\
        \le c \int_{(tD)^2}K(x-y)^{k_1+k_3}dxdy \int_{(tD)^2}K(z-u)^{k_2+k_4}dzdu.
    \end{multline}
\end{lemma}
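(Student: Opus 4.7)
The plan is to start from Young's inequality applied to the two ``corner'' pairs of factors sharing a vertex: since $K\ge 0$,
\[
K(x-y)^{k_1} K(x-z)^{k_3} \le \tfrac{k_1}{k_1+k_3}\, K(x-y)^{k_1+k_3} + \tfrac{k_3}{k_1+k_3}\, K(x-z)^{k_1+k_3},
\]
together with the analogous bound for $K(z-u)^{k_2} K(y-u)^{k_4}$ in terms of $K(z-u)^{k_2+k_4}$ and $K(y-u)^{k_2+k_4}$. Multiplying the two convex combinations decomposes the integrand on the left-hand side of \eqref{equ:lemmabound} as a sum of four non-negative pieces indexed by pairs in $\{x-y,x-z\}\times\{z-u,y-u\}$.

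Two of these, the ``parallel'' pieces $K^{k_1+k_3}(x-y)\,K^{k_2+k_4}(z-u)$ and $K^{k_1+k_3}(x-z)\,K^{k_2+k_4}(y-u)$, factor under Fubini into a product of two double integrals that, up to renaming dummy variables, is exactly the right-hand side of \eqref{equ:lemmabound}.

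The main obstacle will be the two remaining ``cross'' pieces, whose integrands share a middle variable. Focus on the representative term
\[
T := \int_{(tD)^4} K(x-y)^{k_1+k_3}\, K(y-u)^{k_2+k_4}\,dx\,dy\,dz\,du = \vol(tD)\int_{tD}\phi(y)\psi(y)\,dy,
\]
with $\phi(y):=\int_{tD} K(x-y)^{k_1+k_3}\,dx$ and $\psi(y):=\int_{tD} K(y-u)^{k_2+k_4}\,du$ (the twin term with $z$ in the middle is handled identically). To bound $T$ by the desired right-hand side I plan to establish the \emph{sup--average comparison}
\[
\sup_{y\in tD}\phi(y) \;\le\; \frac{c}{\vol(tD)}\int_{(tD)^2} K(x-y)^{k_1+k_3}\,dx\,dy,
\]
with $c$ independent of $t$, and similarly for $\psi$. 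Multiplying these pointwise bounds and integrating in $y\in tD$ absorbs the extra $\vol(tD)$ factor in $T$ and produces the right-hand side up to a multiplicative constant.

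The heart of the argument is this sup--average inequality. For any $y\in tD$ one has $\phi(y)\le \int_{tD-tD} K(w)^{k_1+k_3}\,dw$; meanwhile, the covariogram representation $\int_{(tD)^2}K^a(x-y)\,dx\,dy=\int_{\R^d}\gamma_{tD}(w)K^a(w)\,dw$ together with the scaling $\gamma_{tD}(w)=t^d\gamma_D(w/t)$ yields a matching lower bound $\int_{(tD)^2}K^a \ge c_1\,\vol(tD)\int_{|w|\le \delta_D t}K^a(w)\,dw$ for some $\delta_D>0$ depending only on $D$. The two sides will then be matched via the \textbf{doubling property} of the function $r\mapsto\int_{|w|\le r}K(w)^a\,dw$, which is valid because $K^a$ is both non-negative and non-negative definite by the Schur product theorem; this is the same doubling ingredient already used in Step~1 of the proof of Proposition~\ref{prop:redP}. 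For small values of $t$ the continuity of $K$ at $0$ and compactness of $D$ give the estimate directly, and a gluing argument yields a single constant $c=c(K,D,k_1,k_2,k_3,k_4)$ valid uniformly in $t>0$.
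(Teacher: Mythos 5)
Your proposal is correct and follows essentially the same route as the paper's proof: the same additive splitting of the paired powers (the paper uses $x^ay^b\le x^{a+b}+y^{a+b}$ where you use the weighted Young form), the same identification of the factoring ``parallel'' terms, and the same treatment of the ``cross'' terms via the comparison $\vol(tD)\int_{tD-tD}K^a \lesssim \int_{(tD)^2}K^a(x-y)\,dx\,dy$, proved exactly as in the paper through the covariogram scaling and the Gorbachev--Tikhonov doubling condition for non-negative, non-negative definite functions. The only differences (your sup--average phrasing of the cross term versus the paper's explicit change of variables bounding $\vol(tD\cap(tD-a)\cap(tD-b))$ by $\vol(tD)$) are cosmetic.
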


\begin{proof} %[Proof of Lemma~\ref{lem:bound contr}]
    First, by using the inequality $x^ay^b\le x^{a+b}+y^{a+b}$, the LHS of the inequality \eqref{equ:lemmabound} can be bounded by 
\begin{multline}
    2 \int_{(tD)^4} K(x-y)^{k_1+k_3} K(z-u)^{k_2+k_4} dx  dy  dz  du \\ + 2 \int_{(tD)^4} K(x-y)^{k_1+k_3} K(y-u)^{k_2+k_4} dx  dy  dz  du.
\end{multline}    
Then, by the change of variable $a=x-y$ and $b=y-u$ and compactness of the domain $D$, we may also bound the second term in the previous equation, up to a positive constant, by
\begin{multline}
\vol(tD)\int_{(tD - tD)^2} K(a)^{k_1+k_3} K(b)^{k_2+k_4}  \vol(tD \cap (tD-a) \cap (tD-b))  da db \\ \le
    \vol(tD)^2 \int_{tD -  tD}K(a)^{k_1+k_3}da \int_{tD -   tD}K(b)^{k_2+k_4}db.
\end{multline}
To conclude, we need to show that \begin{equation}
    \vol (tD) \int_{tD - tD}K(a)^{k_1+k_3}da \lesssim \int_{(tD)^2} K(x-y)^{k_1+k_3}dxdy.
\end{equation}
By positivity of the integrand and the doubling conditions for non-negative definite functions that are also non-negative proved in \cite{Gorbachev}, we deduce that \begin{equation}
       \int_{tD - tD}K(a)^{k_1+k_3}da \le \int_{\{\|x\|\le t\cdot {\rm diam}(D)\} } K(a)^{k_1+k_3}da \lesssim \int_{\{\|x\|\le t \}}K(a)^{k_1+k_3}da.
\end{equation}
Reasoning as in \cite[Proof of Proposition 9, Step 1]{MN22}, we have that \begin{equation}\label{equ:remQuotients}
    t^d\int_{\{\|x\|\le t \}}K(a)^{k_1+k_3}da \asymp \int_{(tD)^2} K(x-y)^{k_1+k_3}dxdy,
\end{equation}
which is enough to conclude.
\end{proof}

\bibliographystyle{plain} % We choose the "plain" reference style
\bibliography{pDOM} % Entries are in the refs.bib file

\end{document}